\documentclass{article}

\usepackage[utf8]{inputenc} 
\usepackage{amssymb}
\usepackage{polski}
\usepackage[english]{babel}
\usepackage{amsmath}
\usepackage{amsthm}
\usepackage{bbm}
\usepackage{dsfont}
\usepackage{graphicx}
\usepackage[colorinlistoftodos]{todonotes}
\usepackage{hyperref}
\usepackage{enumerate}
\usepackage{systeme}
\usepackage{bm}
\usepackage[capitalise]{cleveref}
\usepackage{mathtools}
\usepackage{todonotes}
\usepackage{stmaryrd}
\usepackage[shortlabels]{enumitem}
\setlist[description]{labelindent=\parindent,parsep=2mm,topsep=10pt,labelwidth=12mm,leftmargin=\parindent+\labelwidth+2mm}


\newtheorem{theorem}{Theorem}[section]
\newtheorem{proposition}[theorem]{Proposition}
\newtheorem{lemma}[theorem]{Lemma}
\newtheorem{corollary}[theorem]{Corollary}

\theoremstyle{definition}
\newtheorem{definition}[theorem]{Definition}

\newtheorem{question}[theorem]{Question}

\newtheorem{claim}{Claim}[theorem]

\crefname{theorem}{Theorem}{Theorems}
\Crefname{theorem}{Theorem}{Theorems}

\crefname{proposition}{Proposition}{Propositions}
\Crefname{proposition}{Proposition}{Propositions}

\crefname{lemma}{Lemma}{Lemmas}
\Crefname{lemma}{Lemma}{Lemmas}

\crefname{definition}{Definition}{Definitions}
\Crefname{definition}{Definition}{Definitions}

\crefname{corollary}{Corollary}{Corollaries}
\Crefname{corollary}{Corollary}{Corollaries}

\crefname{remark}{Remark}{Remarks}
\Crefname{remark}{Remark}{Remarks}

\crefname{question}{Question}{Questions}
\Crefname{question}{Question}{Questions}

\crefname{claim}{Claim}{Claims}
\Crefname{claim}{Claim}{Claims}

\crefname{fact}{Fact}{Facts}
\Crefname{fact}{Fact}{Facts}

\makeatletter
\@addtoreset{claim}{lemma}
\makeatother

\renewcommand{\theclaim}{\arabic{claim}}

\newenvironment{claimproof}{
  \par\noindent\textit{Proof of Claim \theclaim.} 
}{
  \hfill $\dashv$ \par
}

\newcommand{\rca}{\mathsf{RCA}_0}
\newcommand{\rcas}{\mathsf{RCA}_0^*}
\newcommand{\wkl}{\mathsf{WKL}_0}
\newcommand{\wkls}{\mathsf{WKL}_0^*}
\newcommand{\aca}{\mathsf{ACA}_0}
\newcommand{\cac}{\mathsf{CAC}}
\newcommand{\ads}{\mathsf{ADS}}

\newcommand{\crt}{\mathsf{CRT}^2_2}

\newcommand{\rt}{\mathsf{RT}^2_2}

\newcommand{\rtnk}{\mathsf{RT}^n_k}

\newcommand{\coh}{\mathsf{COH}}

\newcommand{\expax}{\mathsf{exp}}

\newcommand{\pra}{\mathsf{PRA}}
\newcommand{\pa}{\mathsf{PA}}
\newcommand{\pam}{\mathsf{PA}^-}

\newcommand{\sca}{\mathsf{SC}}
\newcommand{\sctheory}{\mathsf{I}\Delta_0+\mathsf{exp} + \mathsf{SC}}
\newcommand{\lpc}{\sigzo\textrm{-}\mathsf{LPC}}


\newcommand{\bszo}{\mathsf{B}\Sigma^0_1}
\newcommand{\iszo}{\mathsf{I}\Sigma^0_1}

\newcommand{\idzz}{\mathsf{I}\Delta^0_0}

\newcommand{\iso}{\mathsf{I}\Sigma_1}

\newcommand{\idz}{\mathsf{I}\Delta_0}

\newcommand{\sigzo}{\ensuremath{\Sigma^0_1} }
\newcommand{\pizo}{\ensuremath{\Pi^0_1} }
\newcommand{\delzo}{\ensuremath{\Delta^0_1} }

\newcommand{\delzz}{\ensuremath{\Delta^0_0} }

\newcommand{\pioo}{\ensuremath{\Pi^1_1} }

\newcommand{\sigzt}{\ensuremath{\Sigma^0_2} }
\newcommand{\sigzth}{\ensuremath{\Sigma^0_3} }
\newcommand{\pizt}{\ensuremath{\Pi^0_2} }
\newcommand{\pizth}{\ensuremath{\Pi^0_3} }

\newcommand{\delz}{\ensuremath{\Delta_0}}

\newcommand{\sigo}{\ensuremath{\Sigma_1}}
\newcommand{\pio}{\ensuremath{\Pi_1}}
\newcommand{\delo}{\ensuremath{\Delta_1}}

\newcommand{\sigzn}{\ensuremath{\Sigma_n^0} }
\newcommand{\pizn}{\ensuremath{\Pi_n^0} }
\newcommand{\delzn}{\ensuremath{\Delta_n^0} }
\newcommand{\sign}{\ensuremath{\Sigma_n}}
\newcommand{\pin}{\ensuremath{\Pi_n}}
\newcommand{\deln}{\ensuremath{\Delta_n}}


\newcommand{\ele}[2]{\exists #1 \! \le \! #2 \,}
\newcommand{\fale}[2]{\forall #1 \! \le \! #2 \,}
\newcommand{\el}[2]{\exists #1 \! < \! #2 \,}
\newcommand{\fal}[2]{\forall #1 \! < \! #2 \,}

\newcommand{\fage}[2]{\forall #1 \! \geq \! #2 \,}
\newcommand{\eg}[2]{\exists #1 \! > \! #2 \,}

\newcommand{\lone}{\mathcal{L}_\mathrm{I}}
\newcommand{\ltwo}{\mathcal{L}_\mathrm{II}}

\newcommand{\nn}{\mathbb{N}}
\newcommand{\cutzo}{\mathrm{I}^0_1}

\newcommand{\codmi}{\mathrm{Cod}(M/I)}

\newcommand{\Con}{\mathrm{Con}}
\newcommand{\Ack}{\mathrm{Ack}}

\newcommand{\vv}{\overline{v}}
\newcommand{\VV}{\overline{V}}
\newcommand{\xx}{\overline{x}}
\newcommand{\XX}{\overline{X}}
\newcommand{\zz}{\overline{z}}
\newcommand{\ZZ}{\overline{Z}}
\newcommand{\ww}{\overline{w}}
\newcommand{\yy}{\overline{y}}
\newcommand{\uu}{\overline{u}}

\newcommand{\X}{\mathcal{X}}

\newcommand{\restr}[2]{{#1}\!\restriction\!{#2}}

\newcommand{\defeq}{\!\vcentcolon\,=}

\newcommand{\cond}[2]{{#1}\!\in\!\mathrm{Cond}_{#2}}
\newcommand{\ncond}[2]{{#1}\!\notin\!\mathrm{Cond}_{#2}}
\newcommand{\lcond}[3]{{#1}\!\trianglelefteqslant_{#3}\!{#2}}

\newcommand{\nam}[2]{{#1}\!\in\!\mathrm{Name}_{#2}}
\newcommand{\condless}[1]{\!\trianglelefteqslant_{#1}\!}
\newcommand{\validname}[3]{{#1}\Vdash_{#2}\!{#3}\!\downarrow}

\newcommand{\forces}[3]{{#1}\Vdash_{#2}\!{#3}}
\newcommand{\nforces}[3]{{#1}\nVdash_{#2}\!{#3}}

\title{A non-speedup result for the chain-antichain principle over a weak base theory}
\author{Katarzyna W. Kowalik\thanks{Faculty of Mathematics, Informatics and Mechanics, University of Warsaw, Banacha 2, 02-097 Warszawa, Poland, 
\texttt{katarzyna.kowalik@mimuw.edu.pl}.}}

\date{}

\begin{document} 
\maketitle

\begin{abstract}
We show that the theory $\wkls+\cac$ is polynomially simulated by $\rcas$ with respect to $\forall\pizth$ formulas.   
For the proof, we use the method of forcing interpretations and syntactically simulate a two-step model-theoretic argument, which involves construction of a restricted definable ultrapower, followed by a generic cut satisfying $\cac$.
Our result sharply contrasts with the previously known fact that $\rcas+\rt$ has non-elementary speedup over $\rcas$. 
\end{abstract}

\section{Introduction}

The logical strength of Ramsey-like statements is arguably one of the richest and most active areas of research in reverse mathematics. In recent years Ramsey's theorem and some of its combinatorial consequences were studied over the weak base theory $\rcas$ \cite{weak_cousins, fkwy, kky:ramsey-rca0star, pfsize, mengzhou-COH}, which was introduced by Simpson and Smith in \cite{Simpson-Smith} and differs from the usually considered system $\rca$ in that it allows induction only for $\delzo$ formulas rather for $\sigzo$ formulas.
The weaker base theory makes it possible to calibrate the logical strength of mathematical theorems that are provable in $\rca$ and to track uses of $\sigzo$-induction in mathematical proofs. 
On the other hand, weakening induction axioms leads to some conceptual challenges that do not occur over $\rca$. 
In particular, the notion of an infinite set becomes less robust, as it is consistent with $\rcas$ 
that there exists an unbounded subset of $\nn$ which is not in bijective correspondence with all of $\nn$. 
Thus, when studying over $\rcas$ the strength of a mathematical theorem concerning infinite sets, one has to be precise and consistent about what one means by saying that a set $A\subseteq\nn$ is infinite.
Usually, one requires $A$ to be merely unbounded, i.e., to contain arbitrarily large elements. Another natural possibility is to require a bijection between $\nn$ and $A$.

In \cite{weak_cousins} and \cite{kky:ramsey-rca0star}, Ramsey's theorem for $n$-tuples and $k$ colours $\rtnk$, the chain-antichain principle $\cac$, the ascending-descending sequence principle $\ads$ and cohesive Ramsey's theorem $\crt$ were shown to be $\forall\pizth$- but not arithmetically conservative over $\rcas$. These results were obtained by some general techniques relying on the fact that all these principles exhibit certain distinctive model-theoretic behaviour: in the absence of $\iszo$ each of them is equivalent to its own relativization to a $\sigzo$-definable proper cut. 

However, it turns out that the above Ramsey-theoretic statements form a much more diverse group of principles than it might initially seem.
One way to see the differences is to look at their `long' versions, i.e. formulations requiring a solution set (i.e. a homogeneous set, chain or antichain etc.) to be in bijective correspondence with $\nn$. 
In \cite{weak_cousins} it is shown that the long versions of $\rtnk$ for $n, k\geq 2$ and of $\cac$, and one formulation of long $\ads$, imply $\rca$, whereas another possible formulation of long $\ads$ and the long version of $\crt$ are $\forall\pizth$-conservative over $\rcas$.

In this paper we adopt a more quantitative method of comparing axiomatic theories, namely proof size: instead of asking what is provable from a given set of axioms, we ask how feasibly it is provable.
Comparing proofs is particularly relevant when considering a conservation result of the form `$T_1$ is $\Gamma$-conservative over $T_2$', where $\Gamma$ is a class of sentences in the common language of $T_1$ and $T_2$. In such a case it is very natural to ask whether $T_1$ is able to provide essentially shorter proofs of those sentences than $T_2$.
It is an interesting phenomenon that one usually obtains one of just two contrasting answers: either the increase in size of proofs of sentences from $\Gamma$ is at most polynomial in $T_2$ with respect to $T_1$, or it cannot be bounded by any elementary computable function. In the latter case one says that $T_1$ has \textit{(non-elementary) speedup over $T_2$ with respect to $\Gamma$}. Here classical examples are $\aca$ over $\pa$ \cite{Pudlak_LC} and $\iso$ over $\pra$ \cite{ignjatovic_PhD}.

A result of the former kind is typically obtained by showing that $T_1$ is actually \textit{polynomially simulated by $T_2$ with respect to $\Gamma$}, i.e., that there exists a polynomial-time algorithm which, given as input a proof in $T_1$ of a sentence $\gamma\in\Gamma$, outputs a proof of $\gamma$ in $T_2$. Well-known conservation results that were strengthened to polynomial simulation are, for instance, arithmetical conservativity of $\rca$ over $\iso$
\cite{ignjatovic_PhD} and $\pioo$-conservativity of $\wkl$ over $\rca$ \cite{Avigad_paper, hajek_interpretability}.

In \cite{pfsize} Ko\l{}odziejczyk, Wong and Yokoyama studied $\rt$ with respect to proof size and showed that its behaviour depends on the base theory.

\begin{theorem}[{\cite[Theorem 3.1]{pfsize}}]\label{rt-speedup}
$\rcas+\rt$ has non-elementary speedup over $\rcas$ with respect to $\sigo$ sentences.  
\end{theorem}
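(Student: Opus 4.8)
The plan is to follow the classical template for non-elementary speedup results, the one behind $\aca$ over $\pa$ and $\iso$ over $\pra$ \cite{Pudlak_LC, ignjatovic_PhD}: exhibit a uniformly described family $(\gamma_n)_{n}$ of \emph{true} $\sigo$ sentences such that $\rcas+\rt$ proves each $\gamma_n$ by a proof of size polynomial in $n$, whereas every $\rcas$-proof of $\gamma_n$ has size non-elementary in $n$. Since each $\gamma_n$ is true it is already provable in $\rcas$ (which extends Robinson arithmetic), so the whole content lies in the proof lengths. The family must be chosen with two constraints in mind. First, the short $\rcas+\rt$-proof of $\gamma_n$ should route through an arithmetic consequence of $\rcas+\rt$ that is \emph{not} of the form $\forall\pizth$ — such consequences exist precisely because $\rt$ is not arithmetically conservative over $\rcas$ — so that the shortcut is genuinely unavailable to $\rcas$ (everything $\rcas$ proves is already a $\forall\pizth$-consequence of $\rcas+\rt$, and conservation would otherwise let $\rcas$ reuse it). Second, $\gamma_n$ should code enough finite, iterated-exponential-scale combinatorics that $\rcas$ is forced to verify it from scratch. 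Natural candidates are Paris--Harrington-style finite Ramsey configurations, or $\sigo$ sentences asserting that a Ramsey-tower (iterated-exponential) value indexed by $n$ exists and lies inside a suitable cut; pinning down the right family is itself part of the work.

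For the upper bound I would exploit the model-theoretic strength of $\rt$ over $\rcas$: over $\rcas$ the principle $\rt$ is equivalent to its own relativisation to a $\sigzo$-definable proper cut $I$, and on $I$ the ambient structure behaves like a model of a stronger arithmetic than $\rcas$ itself proves. Using the $\rt$ axiom $n$ times — to build a tower of nested infinite homogeneous sets, or to push the relevant fast-growing values into $I$ — together with one fixed combinatorial lemma iterated $n$ times, one shows that $\gamma_n$ holds, working throughout in this ``cut/infinite'' picture and never carrying out any reasoning (such as a totality proof failing over $\rcas$) whose formalisation would blow up. Because the only long part of the derivation is writing $n$ copies of a bounded argument, not an induction along a fast-growing parameter, only $\delzo$-induction is needed and the derivation has size $O(\mathrm{poly}(n))$. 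One must check that the use of $\rt$ is genuinely renewed at each stage, so that the short proofs do not collectively factor through a single fixed $\forall\pizth$-theorem of $\rcas+\rt$.

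The hard part is the lower bound: that $\rcas$ cannot do better than non-elementary. The mechanism is that $\rcas$, with no infinite homogeneous set and no access to the non-$\forall\pizth$ shortcut above, must certify $\gamma_n$ by hand; and since $\rcas$ proves the totality only of elementary functions, a $\sigo$-certificate for $\gamma_n$ must effectively unwind the iterated-exponential combinatorics of $\gamma_n$ step by step. To make this precise I would apply cut-elimination (or a Herbrand-style analysis of the $\sigo$-consequences of $\rcas$, whose first-order part is at the level of $\idz+\expax$) to a hypothetical short proof of $\gamma_n$, extract a short low-complexity witnessing certificate, and derive a contradiction with classical non-elementary proof-size lower bounds for elementary arithmetic (in the spirit of \cite{ignjatovic_PhD, Pudlak_LC} and of Paris--Harrington-type independence). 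This forces any $\rcas$-proof of $\gamma_n$ to have non-elementary length.

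I expect the main obstacle to be precisely this lower bound together with the simultaneous choice of $(\gamma_n)$: one needs a family for which the $\rcas+\rt$-side argument honestly survives the weak base — no hidden $\sigzo$-induction, no factoring through a fixed $\forall\pizth$-consequence — while the $\rcas$-side lower bound can be carried out with the correct growth rate, polynomial versus non-elementary and not merely superpolynomial, which requires controlling the cut-elimination blow-up and the relevant rates of growth carefully. A secondary point worth isolating is that the speedup should ultimately be attributable to combinatorial strength present in $\rt$ but absent from weaker principles such as $\cac$, which is exactly what makes the contrasting polynomial-simulation result possible.
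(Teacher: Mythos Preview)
The paper does not prove this theorem; it only cites it from \cite{pfsize} and sketches the key idea in one paragraph. That sketch, however, pinpoints exactly the ingredient your proposal is missing: the classical exponential lower bound on finite Ramsey numbers (there is a $2$-colouring of $[2^{k/2}]^2$ with no homogeneous set of size $k$) is used to prove, in $\rcas+\rt$, the single lemma ``$\cutzo$ is closed under exponentiation''. Once this lemma is available, the speedup follows the standard Pudl\'ak template: iterating closure under exponentiation $n$ times gives $\rcas+\rt$-proofs of size polynomial in $n$ of appropriate finitistic $\sigo$ statements (typically restricted consistency statements), while $\rcas$, whose provably total functions are elementary, needs proofs of non-elementary size.

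Your proposal has the right overall shape but the mechanism for the upper bound is off, and this matters. You write that one should ``use the $\rt$ axiom $n$ times'' and worry that ``the use of $\rt$ is genuinely renewed at each stage, so that the short proofs do not collectively factor through a single fixed $\forall\pizth$-theorem''. In fact the short proofs \emph{do} factor through a single fixed theorem --- the closure of $\cutzo$ under $\exp$ --- proved once and for all from one application of $\rt$. Your own analysis explains why this is harmless: that theorem is not $\forall\pizth$ (the definition of $\cutzo$ already has higher complexity), so conservativity does not let $\rcas$ borrow it. Without identifying this lemma and the exponential Ramsey lower bound that proves it, you have neither a concrete candidate for $(\gamma_n)$ nor a concrete short-proof mechanism; your description of ``towers of nested infinite homogeneous sets'' is not how the argument runs. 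This is also the point of contrast with $\cac$ that the paper exploits: Dilworth gives a polynomial bound for finite $\cac$, so $\cac$ cannot force any super-polynomial closure on $\cutzo$, which is precisely why the speedup argument fails for $\cac$ and the polynomial simulation of \cref{main theorem_cac} becomes possible.
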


\begin{theorem}[{\cite[Theorem 2.1]{pfsize}}]\label{rt-polynomial simulation}
$\wkl+\rt$ is polynomially simulated by $\rca$ with respect to $\forall\pizth$ sentences.    
\end{theorem}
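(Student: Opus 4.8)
The theorem is the proof-size strengthening of the known fact that $\wkl+\rt$ is $\fapizth$-conservative over $\rca$, so the plan, following \cite{pfsize}, is to show that the model-theoretic proof of that conservation can be carried out feasibly --- converted into a polynomial-time transformation of proofs. I would split the conservation into two layers and upgrade each to a polynomial simulation: (a) the ``Ramsey'' layer, a polynomial simulation of $\wkl+\rt$ by $\rca+\bszt$ with respect to $\fapizth$ sentences; and (b) the classical layer, a polynomial simulation of $\rca+\bszt$ by $\rca$ with respect to $\fapizth$ (a fortiori $\pizth$) sentences. Since $\fapizth$ sentences are preserved throughout and the composition of polynomial simulations is again one, this yields the theorem.

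Layer (b) is the Paris--Kirby conservation of $\bszt$ over $\iszo$ in its second-order form. It has a standard proof-theoretic analysis --- via the finite axiomatisability of $\bszt$ together with a model construction runnable inside $\iso$, or via partial cut-elimination --- from which one reads off that the associated proof transformation is at most elementary, and polynomial at the $\fapizth$ level; thus the mild $\bszt$-strength of $\rt$ (Hirst's $\rt\vdash\bszt$) is disposed of cheaply.

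Layer (a) is the crux: here one syntactically simulates the construction of an ``$\rt$-generic''. The $\wkl$-part is routine --- a generic path through each bounded tree named in the input proof, handled as in \cite{Avigad_paper} --- so the real work concerns $\rt$. From the given $\wkl+\rt$-proof one extracts the finitely many colourings $c_1,\dots,c_m$ to which $\rt$ is applied; then, working inside $\rca+\bszt$, one sets up a Mathias-style forcing --- conditions and names coded by numbers --- that along a generic filter produces sets which are first cohesive (absorbing $\coh$) and then, for the resulting stable colourings, homogeneous, and one proves inside $\rca+\bszt$: that the forcing relation, restricted to the bounded class of formulas actually needed, is definable at the appropriate arithmetic level; that the dense sets required to meet $c_1,\dots,c_m$ and to secure the fragment of induction used afterwards are met by some generic filter; and that in the generic extension an interpretation of the input proof can be re-run, so that its $\fapizth$ conclusion $\gamma$ already holds below. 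The feasibility content is that the number of dense sets to be met and the size of each forcing-relation proof are polynomial in the input proof, hence so are the whole simulated argument and the $\rca+\bszt$-proof of $\gamma$ it produces. This is structurally the same ``forcing interpretations plus syntactic simulation'' method used in the present paper for $\cac$, but technically heavier, since the $\rt$-generic requires a nested forcing in place of a restricted definable ultrapower followed by one generic cut.

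The step I expect to be the main obstacle is precisely the feasibility bookkeeping in layer (a). The underlying $\rt$-conservation is proved by a delicate infinitary construction --- nested forcings and a careful preservation-of-induction analysis --- and it is not automatic that it survives restriction to polynomially-bounded resources: one must reorganise it so that only polynomially many forcing steps interact with the input proof, so that the forcing relations stay within a fixed complexity class (keeping the ``definable forcing'' lemmas short), and so that the induction available in the generic extension is visibly enough to carry the interpreted proof. Making explicit ``how much Ramsey'' a given proof actually consumes, and converting that into a feasible bound on the forcing, is the decisive point; that this is possible at all rests on $\rt$ proving no fast-growing totality statements at the $\fapizth$ level, so that no non-elementary blow-up is forced --- in sharp contrast with \cref{rt-speedup}, where the weaker base theory $\rcas$ makes exactly such a blow-up unavoidable.
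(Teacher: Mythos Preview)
This theorem is not proved in the present paper; it is cited from \cite{pfsize}. But the paper tells you enough about the method there to see that your sketch diverges from it in a way that matters. In \cite{pfsize} the simulation is obtained by a \emph{single} generic-cut forcing interpretation over $\rca$ --- the direct ancestor of the present paper's $\tau_2$ (cf.\ the remark that \cref{definition tau_2} is ``based on \cite[Definition 2.11]{pfsize}'') --- whose conditions are finite sets that are $\alpha$-large in the Ketonen--Solovay sense for ordinals $\alpha<\omega^\omega$. The decisive combinatorial fact, taken from \cite{ky:ordinal-valued-ramsey} and flagged in the introduction here, is that the upper bound on finite $\rt$ expressed in terms of $\alpha$-largeness is \emph{polynomial}: roughly, any $2$-colouring of pairs from an $\omega^{cn}$-large set admits an $\omega^{n}$-large homogeneous subset, for a fixed constant~$c$. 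This is exactly what lets one prove, by a proof of size polynomial in the input, that $\rt$ is forced: a condition is thinned to a homogeneous subcondition while its largeness exponent drops only linearly. There is no Mathias forcing and no $\coh+\srt$ decomposition.

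Your plan instead tries to formalise the Cholak--Jockusch--Slaman route (cohesive sets via Mathias forcing, then stable colourings), and this is where the gap lies. That route does yield $\forall\pizth$-conservation, but it is not known to yield \emph{polynomial} simulation: Mathias conditions carry infinite reservoirs, the density arguments for cohesiveness and for $\iszo$-preservation in the stable step are highly non-uniform, and no feasible bound on the size of the resulting proofs has been extracted from them. Your final paragraph correctly names the obstacle --- ``making explicit how much Ramsey a given proof consumes'' --- but does not supply the missing idea; the $\alpha$-large framework is precisely the device that converts that vague desideratum into a concrete polynomial combinatorial bound. Incidentally, your remark that the $\rt$ case is ``technically heavier'' than the present paper's $\cac$ argument has it backwards: over $\rca$ there is no need for an ultrapower step, so \cite{pfsize} uses one forcing, not a nested one.
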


The proof of \cref{rt-speedup} relies heavily on the classical exponential lower bound on Ramsey numbers for the finite version of $\rt$: there exists a $2$-colouring of $[2^{\frac{k}{2}}]^2$ without a homogeneous subset containing $k$ elements. This fact is used in \cite{pfsize} to prove a key lemma saying that $\rcas+\rt$ implies that the definable cut $\cutzo$ (which will also play a role here and will be defined later) is closed under exponentiation.

In this paper we focus on $\cac$, the strongest consequence of $\rt$ over $\rcas$ considered in \cite{weak_cousins}:
\begin{description}
\item[$\cac$] \itshape  For every partial order $(\nn, \preccurlyeq)$ there exists an unbounded set $S \subseteq \nn$ which is either a chain or an antichain in $\preccurlyeq$.
\end{description}
Here $S\subseteq \nn$ is a \textit{chain} in $\preccurlyeq$ if for all $x, y\in S$ either $x\preccurlyeq y$ or $x\succcurlyeq y$, and it is an \textit{antichain} in $\preccurlyeq$ if for all distinct $x, y\in S$ neither $x\preccurlyeq y$ nor $x\succcurlyeq y$.  
The starting point for our analysis of $\cac$ is the observation that the upper bound on Ramsey numbers corresponding to the finite version of $\cac$ is only polynomial: in every partial order on a set of size $k(k-1)$ there exists a chain or an antichain of size $k$. This is an easy consequence of the following classical result of Dilworth \cite{Dilworth}.

\begin{theorem}[Dilworth]\label{thm:Dilworth}
In every finite partial order $(P, \leq)$ the size of the largest antichain is equal to the smallest number $n$ such that $(P, \leq)$ is a union of $n$ chains.
\end{theorem}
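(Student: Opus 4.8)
The plan is to prove \cref{thm:Dilworth} by splitting it into the easy inequality ``(width) $\le$ (minimum number of covering chains)'' and the harder reverse inequality, the latter by induction on $|P|$ in the style of Galvin. First I would dispose of the easy direction: a chain and an antichain meet in at most one element, so any partition of $P$ into $n$ chains forces every antichain to have at most $n$ elements; hence the largest antichain has size no greater than the smallest number of chains needed to cover $P$. It then remains to produce a covering of $P$ by $a$ chains, where $a$ denotes the size of the largest antichain.

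For the hard direction I would argue by induction on $|P|$, the empty poset being trivial. Fix a maximal element $m$ and put $P' = P \setminus \{m\}$; removing one element changes the width by at most one, so the width of $P'$ is either $a-1$ or $a$. In the first case the induction hypothesis covers $P'$ by $a-1$ chains and one adjoins the singleton chain $\{m\}$. In the second case, apply the induction hypothesis to $P'$ to get a partition into chains $C_1, \dots, C_a$; since $P'$ has an antichain of size $a$ and a chain meets an antichain at most once, every maximum antichain of $P'$ meets each $C_i$ in exactly one point. For each $i$ let $a_i$ be the $\le_P$-largest element of $C_i$ lying in some maximum antichain of $P'$. The key step is to show that $A = \{a_1, \dots, a_a\}$ is an antichain: given $i \ne j$ and a maximum antichain $A_i$ of $P'$ with $a_i \in A_i$, the representative of $A_i$ in $C_j$ lies $\le_P a_j$ by maximality of $a_j$, and incomparability inside $A_i$, applied symmetrically in $i$ and $j$, rules out both $a_i \le_P a_j$ and $a_j \le_P a_i$.

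Now $A$ is a maximum antichain of $P$ with $m \notin A$, so $A \cup \{m\}$ is not an antichain and $m$ is comparable with some $a_i$; maximality of $m$ then forces $a_i <_P m$. The chain $D = \{x \in C_i : x \le_P a_i\} \cup \{m\}$ does the job: I would check that $P \setminus D$ has width at most $a-1$, because a size-$a$ antichain of $P \setminus D$ would be a maximum antichain of $P'$ and hence would have a representative in $C_i$, which --- the part of $C_i$ at or below $a_i$ having been removed --- must lie strictly above $a_i$, contradicting the choice of $a_i$. The induction hypothesis covers $P \setminus D$ by $a-1$ chains, and adding $D$ gives the required $a$ chains. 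The main obstacle is the verification that $\{a_1, \dots, a_a\}$ is an antichain; the rest is routine case-checking. If one prefers to avoid this ad hoc induction, an alternative is to deduce Dilworth's theorem from König's theorem: on two disjoint copies $\{x^- : x \in P\}$ and $\{x^+ : x \in P\}$ form the bipartite graph with $x^-$ adjacent to $y^+$ iff $x <_P y$; matchings of size $|P|-k$ correspond to chain decompositions into $k$ chains, the elements of $P$ avoided by a vertex cover in both copies form an antichain, and König's matching/cover duality turns these two observations into the desired equality.
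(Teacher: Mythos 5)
The paper does not actually prove Dilworth's theorem; it cites Dilworth's original 1950 paper and later remarks only that the result is ``easily provable in $\idz+\expax$ by elementary finite combinatorics'' when invoking it in the proof of \cref{tau2-is-polynomial-FI-of-CAC}. Your argument is a correct and complete rendition of Galvin's inductive proof: the easy inequality follows from the fact that a chain and an antichain meet in at most one point; for the converse you correctly handle the two cases according to whether removing a maximal element $m$ drops the width, and in the harder case you verify both that the set $A = \{a_1,\dots,a_a\}$ of topmost antichain-representatives is itself a maximum antichain and that deleting the chain $D = \{x\in C_i : x \le_P a_i\}\cup\{m\}$ strictly reduces the width. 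The two-sided argument in the antichain verification (applying incomparability in $A_i$ and then symmetrically in $A_j$) is exactly right. Since the paper's downstream use requires the theorem to hold in $\idz+\expax$, it is worth remarking that your proof is well suited for that: it is an induction on the cardinality of $P$, the invariants are expressible by $\Delta_0$ formulas with the poset and the chain decomposition as coded parameters, and no unbounded search is needed, so the ``elementary finite combinatorics'' claim is indeed borne out. Your alternative via K\"onig's theorem is also standard and correct, though arguably it introduces more machinery than the direct induction and would require formalizing bipartite matching duality, so the Galvin route is the more economical choice in the present setting.
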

This upper bound for finite $\cac$ can be used to show that $\cac$, in contrast to $\rt$, does not imply over $\rcas$ the closure of the cut $\cutzo$ under any super-polynomially growing function \cite[Theorem 3.16]{weak_cousins}. Thus, the argument from \cite{pfsize} used to prove \cref{rt-speedup} will not go through for $\cac$. On the other hand, the polynomial simulation result for $\wkl+\rt$ was obtained in \cite{pfsize} using the fact that an upper bound on finite Ramsey's theorem expressed in terms of so-called $\alpha$-large sets is polynomial. 
All this suggests that the polynomial upper bound for finite $\cac$ can be used to strengthen $\forall\pizth$-conservativity of $\wkls+\cac$ over $\rcas$ to polynomial simulation. We confirm this expectation and prove the following main result of this paper.

\begin{theorem}\label{main theorem_cac}
$\wkls+\cac$ is polynomially simulated by $\rcas$ with respect to $\forall\pizth$ sentences. 
\end{theorem}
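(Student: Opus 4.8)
The plan is to follow the two-step model-theoretic blueprint described in the abstract and to simulate it syntactically via forcing interpretations. On the semantic side, one starts with a countable model $\modelm\models\rcas$ together with a $\wkls+\cac$-proof of a $\forall\pizth$ sentence $\gamma$; the goal is to produce a $\rcas$-proof of $\gamma$ whose length is polynomial in the input. By the standard translation between polynomial simulation and model-theoretic constructions that can be carried out ``definably'' (i.e. by a bounded-complexity forcing relation with a definable forcing predicate), it suffices to exhibit, for every model of $\rcas$, a two-stage extension/cut construction producing a model of $\wkls+\cac$ that agrees with the ground model on $\pizth$ facts, and then to check that each stage is given by a forcing notion whose basic relations are of low enough complexity that the associated proof-theoretic transformation is polynomial. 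I would set up the forcing-interpretation framework exactly as in \cite{pfsize} for \cref{rt-polynomial simulation}, so that the bulk of the infrastructure (coding of the interpretation, the length bookkeeping, the reduction to a ``one model'' statement) can be quoted rather than rebuilt.

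First I would handle the passage to $\wkls$: relative to $\rcas$ one can force with finite binary trees / $\Sigma^0_1$-definable conditions to adjoin a path through every infinite (i.e. unbounded) binary tree while preserving $\pizth$ sentences; this is the $\rcas$-analogue of the Harrington-style $\wkl$ conservativity argument, and in the proof-size setting it is the polynomial-simulation version of $\pioo$-conservativity of $\wkl$ over $\rca$ from \cite{Avigad_paper, hajek_interpretability}, adapted to the starred theories. Second, and this is where the new content lies, I would realize $\cac$ by the restricted-definable-ultrapower-then-generic-cut strategy. Given a partial order $(\nn,\preccurlyeq)$ in the (already $\wkls$) model, I would first pass to a restricted definable ultrapower over $\cutzo$ — using the polynomial Dilworth bound of \cref{thm:Dilworth} to control the combinatorics of finite chains/antichains — and then cut the resulting model down along a generic $\sigzo$-definable proper cut that is simultaneously a chain or an antichain in (the image of) $\preccurlyeq$. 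The point of the polynomial finite-$\cac$ bound is that it lets this cut be built by a forcing whose conditions are finite chains-or-antichains whose sizes do not outstrip $\cutzo$, so that $\cutzo$ is \emph{not} forced to be closed under exponentiation (in contrast to the situation for $\rt$, cf.\ \cref{rt-speedup}), and hence the forcing relation stays $\Sigma^0_1$/$\Pi^0_1$ and the simulation remains polynomial.

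Concretely the steps, in order, are: (1) recall the forcing-interpretation machinery and the precise equivalence ``polynomial simulation for $\forall\pizth$ sentences $\Leftrightarrow$ existence of a uniformly definable, bounded-complexity forcing construction'' from \cite{pfsize}; (2) carry out the $\wkls$-step by path forcing, checking $\pizth$-preservation and the complexity bound; (3) given an arbitrary partial order, build the restricted definable ultrapower over $\cutzo$, verifying that $\rcas$ (hence $\idzz+\expax$ on the relevant cut) is preserved and that $\cutzo$ survives as a proper cut; (4) force the generic proper cut inside the ultrapower that is homogeneous (chain or antichain) for $\preccurlyeq$, using Dilworth's theorem to show the generic filter is nonempty and that the relevant density requirements are $\Sigma^0_1$; (5) verify that the cut model satisfies $\cac$ together with $\wkls$ and is $\pizth$-equivalent to the original model; (6) assemble the two forcing steps into a single interpretation and read off the polynomial bound on proof length. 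I expect the main obstacle to be step (4)–(5): one must arrange the generic cut so that \emph{every} partial order coded in the final model — not just the one fixed at the start — receives a solution, which typically requires iterating or amalgamating the cut-forcing over all partial orders while keeping the global forcing relation of bounded complexity and keeping the construction ``definable'' enough that the syntactic simulation does not blow up; getting the complexity bookkeeping for this iteration to stay polynomial, and making sure $\sigzo$-induction is genuinely never needed (so that the target really is $\rcas$ and not $\rca$), is the delicate part.
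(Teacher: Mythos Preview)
Your overall shape --- restricted ultrapower followed by a generic cut, with Dilworth supplying the combinatorics --- matches the paper's strategy, but several of your concrete design choices are off in ways that would block the argument.

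First, you are missing the case analysis that makes the ultrapower step work. The paper splits on whether $\iszo$ holds (if so, quote \cref{rt-polynomial simulation}) and, under $\neg\iszo$, on whether $\cutzo$ is itself $\sigzo$-definable (the axiom $\lpc$). The ultrapower forcing $\tau_1$ (conditions = unbounded sets, names = total functions) is only a forcing interpretation under $\lpc$: the density condition for atoms of the form $t(\vv)\in\mathbb{I}$ needs an unbounded set $A$ indexed by $\cutzo$ to build the diagonal extension. Under $\neg\lpc$ one skips the ultrapower entirely and interprets the auxiliary cut $\mathbb{I}$ directly as $\cutzo$. Without this split your ultrapower stage is not well-defined as a forcing interpretation.

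Second, your description of the cut forcing is wrong in a way that matters. You propose conditions that are ``finite chains-or-antichains whose sizes do not outstrip $\cutzo$''. The paper's conditions are \emph{exponentially sparse finite sets of cardinality strictly above the cut $\mathbb{I}$}; they are not tied to any particular partial order. Given such a condition $s$ and any coded partial order $\preccurlyeq$, one splits $s$ in half, applies Dilworth to the lower half to extract a chain or antichain $s^*$ of size $\geq\sqrt{|s_1|}$, and observes that $s^*$ is still a condition because $\mathbb{I}$ is closed under multiplication. This is exactly where the polynomial Dilworth bound enters, and it dissolves the obstacle you flagged in step~(4)--(5): since conditions are order-agnostic, every partial order is handled by a single density argument, and no iteration or amalgamation is needed.

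Third, your separate Harrington-style path forcing for $\mathsf{WKL}$ is unnecessary. In the generic cut, second-order objects are coded sets from the ambient model, and the paper shows directly that $\sigzo$-separation --- hence $\mathsf{WKL}$ and $\delzo$-comprehension --- is forced by every condition of $\tau_2$. Adding a path-forcing stage would only complicate the complexity bookkeeping.

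Finally, the $\forall\pizth$-reflection is not automatic and is the genuinely delicate point you do not address: given a true $\exists\sigzth$ sentence in the ground model with witnesses $B,b$, one must produce a $\tau_2$-condition in the generic ultrapower that forces it. The paper does this by iterating a Skolem-type function $f_\theta$ more than $\mathbb{I}$ times (possible precisely because of the ultrapower's diagonal element $d$) to obtain an exponentially sparse set, and then checking that this set, as a condition, forces the $\pizt$ matrix.
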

\noindent 
As we will see, strengthening the theory $\rcas+\cac$ to $\wkls+\cac$ is very natural and requires no additional effort, both for the conservation result and for the polynomial simulation.
The proof of \cref{main theorem_cac} is obtained by transforming a two-step model-theoretic construction that can be used to prove (in an alternative way to that from \cite{kky:ramsey-rca0star}) $\forall\pizth$-conservativity of $\wkls+\cac$ over $\rcas$ into a syntactical forcing argument. 
The first step of the construction is a generic restricted ultrapower of a model of $\rcas$. Then, inside the ultrapower, one builds a generic proper cut satisfying $\cac$, using bounds on the finite version of $\cac$.
For each of these steps we will define a forcing interpretation, which can be seen as a syntactic simulation of the model-theoretic construction of a generic object.

The structure of the paper is as follows. 
After a preliminary Section 2, we describe the method of forcing interpretations in Section 3. Our main theorem is proved in Section 4, which consists of three parts. In the first two we define forcing interpretations corresponding to, respectively, the generic ultrapower and the generic cut. In the third part we complete the proof by composing the two forcing interpretations.

The paper is based on Chapter 4 of the author's PhD thesis \cite{kasia:rozprawa}, which provides additional details and motivation for the main result.


\section{Preliminaries}\label{chapter:preliminaries}
Standard monographs on first- and second-order arithmetic are \cite{HP_Book} and \cite{Dzhafarov_Mummert_Book, Simpson_Book}, respectively. For an introduction to the topic of proof size we refer the reader to the survey by Pudl\'ak \cite{Pudlak_Handbook}.

The language of first-order arithmetic $\lone$ has one sort of variables $x, y, z, \dots$, equality, and non-logical symbols $0, 1, +, \cdot, \leq$. 
The language of second-order arithmetic $\ltwo$ is a two-sorted language which is obtained from $\lone$ by adding variables of a second sort $X, Y, Z, \dots$ and a relational symbol $\in$. The language $\ltwo$ does not contain an equality symbol for the second sort. Variables of the first and the second sort are called \textit{first-} and \textit{second-order variables}, respectively.

The classes $\delzn$, $\sigzn$, $\pizn$ are defined in the usual way in terms of first-order quantifier alternations, but allowing second-order free variables. The classes $\sign$, $\pin$, $\deln$ are restrictions of $\delzn$, $\sigzn$, $\pizn$ to the language $\lone$, that is, they consist of formulas without any second-order variables. We use notation like $\sign(A)$ for the set of those $\sigzn$ formulas that contain $A$ as the only second-order parameter. We use these conventions to name theories, e.g, $\iso$ is an $\lone$-theory strictly contained in the $\ltwo$-theory $\iszo$.
If $\Gamma$ is a class of formulas, then $\forall\Gamma$ is the class of universal closures of formulas from $\Gamma$. 

Models for the language $\ltwo$ have the form $(M,\mathcal{X})$, where $M$ is an $\lone$-structure, with its underlying set called the \textit{first-order universe}, and 
$\mathcal{X}\subseteq\mathcal{P}(M)$. We write $\nn$ for the set of natural numbers as formalized within a formal theory (e.g., every model satisfies `$\forall x\,(x\in\nn)$'), whereas $\omega$ denotes the standard natural numbers. A \textit{cut} is a subset $I\subseteq M$ which is an initial segment closed under successor. If $I\neq M$, then $I$ is a \textit{proper cut}. In every model the smallest cut is $\omega$. If $\omega$ is a proper cut, then such a model is called \textit{nonstandard}.

We write `$2^x = y$' and `$2_y(x)=z$' for the usual $\delz$ formulas defining in $\omega$ the graphs of the exponential and superexponential functions, respectively, where the latter is defined recursively by $2_0(x)=x$ and $2_{y+1}(x)=2^{2_y(x)}$. 

A set $X\in\X$ is \textit{finite} if it is bounded, i.e. if $\exists x\,\forall y\,(y\in X \Rightarrow y<x)$ holds. In a formal theory, a set which is not finite is called \textit{unbounded}, whereas the word `infinite' is used only in the metatheory. In a model $(M, \X)\vDash\idzz+\expax$, a subset $X\subseteq M$ is a finite set iff it is \textit{coded} by some number $c\in M$ -- that is, if it has the form $(c)_{\Ack}=\{x\in\nn\colon x\in_{\Ack} c\}$, where $x\in_{\Ack}c$ denotes the usual $\delz$ formula expressing that the $x$-th bit in the binary expansion of $c$ is $1$. We usually identify finite sets with their codes. We write $|c|=y$ for the $\delz$ formula saying that the cardinality of $c$ is $y$.
For a proper cut $I\subseteq M$ we write $\codmi$ for the family of subsets of $I$ that are coded in $M$, that is, $X\in\codmi$ iff $X=I\cap (c)_{\Ack}$ for some $c\in M$.

The system $\rcas$ is usually axiomatized by $\pam$ (a finite list of axioms expressing basic properties of $+$, $\cdot$ and $\leq$), $\idzz$, $\delzo$-comprehension and the axiom $\expax\defeq\forall x\,\exists y\,(x^y=z)$ expressing the totality of the exponential function. It was shown in \cite{Simpson-Smith} that $\rcas$ proves $\bszo$. 
We write $\wkls$ for $\rcas$ extended by weak K\"onig's lemma $\mathsf{WKL}$. By \cite{Simpson-Smith}, if $M\vDash\idz$ and $I\subseteq M$ is closed under exponentiation, then the $\ltwo$-structure $(I, \codmi)$ satisfies $\wkls$.

The theory $\rcas$ is strictly weaker than $\rca$ as it does not imply $\iszo$. 
In fact, by \cite{SimpsonYokoyama}, $\iszo$ is equivalent over $\rcas$ to each of the statements: `every unbounded set $A\subseteq\nn$ is in bijective correspondence with $\nn$' and `every unbounded set $A\subseteq\nn$ has a finite subset with $k$ elements, for arbitrarily large $k$'. 
Thus, $\rcas+\neg\iszo$ implies that there exists an unbounded set $A$ which can be enumerated in increasing order by some proper $\sigzo$-definable proper cut $I$ as $A= \{a_i\}_{i\in I}$ rather than by the whole $\nn$. Conversely, for every $\sigzo$-definable proper cut one can find such a `short’ unbounded set.

\begin{proposition}\label{cofinal set}
Let $(M, \mathcal{X})\vDash\rcas$. For every $\Sigma^0_1$-definable cut $I$ there exists an unbounded set $A\in\mathcal{X}$
which can be enumerated in increasing order as $A=~\!\{a_i\}_{i\in I}$.
\end{proposition}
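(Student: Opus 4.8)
The plan is to realise $A$ as the range of a strictly increasing function extracted from a monotone witness function for the $\Sigma^0_1$ definition of $I$. Write $I=\{x:\exists y\,\theta_0(x,y)\}$ with $\theta_0$ a $\Delta^0_0$ formula, possibly with parameters from $(M,\mathcal{X})$. First I would replace $\theta_0$ by the normal form given by the $\Delta^0_0$ formula $\theta(x,y)$ expressing $\forall x'\le x\,\exists y'\le y\,\theta_0(x',y')$; this $\theta$ is \emph{monotone} in the sense that $\theta(x,y)$ together with $x'\le x$ and $y'\ge y$ implies $\theta(x',y')$. The base theory enters in verifying $x\in I\iff\exists y\,\theta(x,y)$: the implication $\Leftarrow$ is immediate (specialise to $x'=x$), while for $\Rightarrow$ one observes that $x\in I$ puts every $x'\le x$ into $I$ because $I$ is an initial segment, and then $\bszo$ --- available in $\rcas$ by \cite{Simpson-Smith} --- provides a single $y$ bounding all the $\theta_0$-witnesses for the $x'\le x$.

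Next, let $f(x)$ denote the least $y$ such that $\theta(x,y)$; this is a $\Delta^0_0$-definable partial function whose domain is precisely $I$. Put $a_i\defeq f(i)+i$ for $i\in I$ and $A\defeq\{a_i:i\in I\}$, and check three things. (i) $A\in\mathcal{X}$: membership $m\in A$ is equivalent to $\exists i\le m\,\bigl[\theta(i,m-i)\wedge\forall y<m-i\,\neg\theta(i,y)\bigr]$, a $\Delta^0_0$ condition, so $A$ exists by $\Delta^0_1$-comprehension. (ii) $\langle a_i\rangle_{i\in I}$ is strictly increasing: monotonicity of $\theta$ in its first argument makes $f$ non-decreasing on $I$, so $i<i'$ in $I$ yields $a_i=f(i)+i<f(i')+i'=a_{i'}$; consequently the increasing enumeration of $A$ is exactly $\langle a_i\rangle_{i\in I}$, which is the statement $A=\{a_i\}_{i\in I}$. (iii) $A$ is unbounded in $M$, i.e. $\{a_i:i\in I\}$ is cofinal in $M$.

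I expect (iii) to be the only delicate point. If $I=M$ it is immediate since $a_i\ge i$. If $I$ is a proper cut, fix $m\in M$ and argue that some $i\in I$ satisfies $f(i)>m$ (whence $a_i>m$): otherwise $f(i)\le m$ for all $i\in I$, so, using monotonicity of $\theta$ in its second argument, $\theta(x,m)$ holds for every $x\in I$; combined with the trivial $\theta(x,m)\Rightarrow x\in I$ this gives $I=\{x:\theta(x,m)\}$. But the right-hand side is a $\Delta^0_0$-definable set which, being equal to the cut $I$, contains $0$ and is closed under successor, so by $\idzz$ it equals all of $M$ --- contradicting that $I$ is proper. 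Thus the crux is that $\idzz$ forbids $\Delta^0_0$-definable proper cuts, while the monotone normal form is exactly what turns boundedness of $f$ on $I$ into $\Delta^0_0$-definability of $I$. Assembling (i)--(iii) completes the proof; this is the converse direction mentioned without proof immediately before the statement.
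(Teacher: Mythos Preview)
Your proof is correct. The paper states this proposition without proof, so there is nothing to compare against; your argument fills the gap cleanly. The key steps---passing to the monotone normal form $\theta(x,y)$, checking via $\bszo$ that it still defines $I$, defining $a_i=f(i)+i$ to force strict monotonicity, and using $\idzz$ to rule out $\Delta^0_0$-definability of a proper cut---are all sound and efficiently organised.
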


Working in a model of $\rcas+\neg\iszo$ we will often use an unbounded set like in \cref{cofinal set} to split the first-order universe into $I$-many finite intervals $(a_{i-1}, a_i] = \{x\in\nn\colon a_{i-1}<x\leq a_i\}$. We will use the convention that $a_{-1}=-1$ to get $(a_{-1}, a_0]=\{x\in \nn \colon x\leq a_0\}$. We write $x\in(a_{i-1}, a_i]$ for the formula expressing
`$x$ belongs to the $i$-th interval' and $x=a_i$ for `$x$ is the $i$-th element of $A$'. Note that these are $\delo(A)$-definable binary relations, and the shape of the formulas does not depend on $A$.

The following $\forall\pizth$-definable set will be crucial in our proof of \cref{main theorem_cac}:
\begin{equation}\label{cut 0-1}
\cutzo\vcentcolon = \{ x\in\nn\colon\, \text{every unbounded set has a finite subset of cardinality } x\}.       
\end{equation}
By the correspondence between
unbounded subsets of $\nn$ and $\sigzo$-definable cuts, $\cutzo$ can equivalently be defined as the intersection of all $\sigzo$-definable cuts, so it is itself a cut. 
We observe that $\cutzo$ is closed under multiplication, which is not obvious at first glance, as in all models of $\rcas+\neg\iszo$ there exist $\sigzo$-definable cuts that are not closed even under addition. 

\begin{proposition}\label{proposition:cutzo closed under multiplication}
$\rcas$ proves that the cut $\cutzo$ is closed under multiplication.
\end{proposition}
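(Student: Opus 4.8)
The plan is to use the fact, recorded right after the definition \eqref{cut 0-1}, that $\cutzo$ is the intersection of all $\Sigma^0_1$-definable cuts (parameters allowed). Fix $a,b\in\cutzo$; it suffices to show that every unbounded set $A$ has a finite subset of cardinality $ab$, since this is exactly $ab\in\cutzo$. Given such an $A$, write $L_A:=\{m\in\nn : A\text{ has a finite subset of cardinality }m\}$ and put
\[
P \;:=\; \{\,k\in\nn : kb\in L_A\,\}\;=\;\{\,k\in\nn : A\text{ has a finite subset of cardinality }kb\,\}.
\]
Since $k\in P$ is expressed by $\exists c\,(|c|=kb\wedge (c)_{\Ack}\subseteq A)$, with the leading quantifier unbounded and the matrix $\Delta^0_0$ (using that ``$|c|=kb$'' is $\Delta^0_0$), the set $P$ is $\Sigma^0_1(A,b)$. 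Hence, once we know that $P$ is a cut, the characterization of $\cutzo$ gives $a\in\cutzo\subseteq P$, which says precisely that $A$ has a finite subset of cardinality $ab$; as $A$ was arbitrary, this proves $ab\in\cutzo$.

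So the whole proof reduces to showing that $P$ is a cut. After disposing of the trivial case $b=0$ (where $ab=0\in\cutzo$) and assuming $b\geq 1$, we have $0\in P$, and $P$ is downward closed because $L_A$ is. The one real step --- and the step I expect to be the main obstacle, since $\rcas$ has no $\Sigma^0_1$-induction and its $\Sigma^0_1$-definable cuts need not even be closed under addition --- is closure of $P$ under successor, i.e.\ passing from ``$A$ has a finite subset of cardinality $kb$'' to ``$A$ has a finite subset of cardinality $(k+1)b$''. The idea is to adjoin a fresh block of $b$ elements of $A$ lying strictly above the old ones. Concretely, take a finite $F\subseteq A$ with $|F|=kb$ and let $t:=\max F$ (with the convention $t:=-1$ when $F=\emptyset$, so that then $A\cap(t,\infty)=A$). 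Since $A$ is unbounded and $t$ is a number, $A\cap(t,\infty)$ is again an unbounded set, and it lies in $\mathcal{X}$ by $\Delta^0_0$-comprehension; applying the hypothesis $b\in\cutzo$ to it yields a finite $G\subseteq A\cap(t,\infty)$ with $|G|=b$. As $F\subseteq[0,t]$ and $G\subseteq(t,\infty)$ are disjoint, $F\cup G$ is a finite subset of $A$ of cardinality $kb+b=(k+1)b$, so $k+1\in P$. Note that $b\in\cutzo$ is used in an essential way: it is exactly what guarantees that an arbitrary tail $A\cap(t,\infty)$ of $A$ still carries a block of $b$ elements.

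This finishes the argument. The remaining matters are routine over $\idzz+\expax$: that $\mathcal{X}$ is closed under $\Delta^0_0$-comprehension and finite unions, that a bounded set coded in $M$ has a cardinality satisfying $|F\cup G|=|F|+|G|$ for disjoint $F,G$, and that ``$A$ has a finite subset of cardinality $m$'' is $\Sigma^0_1$. (An essentially equivalent presentation avoids naming $P$ altogether: one checks that the set $\{x\in A : b\mid |A\cap[0,x]|\}$ of every $b$-th element of $A$ is an unbounded member of $\mathcal{X}$ --- its unboundedness being precisely the successor step above --- and then, feeding this set to $a\in\cutzo$, one finds elements $y_1<\dots<y_a$ of it and observes that $A\cap[0,y_a]$ is a subset of $A$ with at least $ab$ elements.)
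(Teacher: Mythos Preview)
Your argument is correct. The route differs from the paper's in presentation: the paper argues by contraposition and closure under squaring, assuming $a^2\notin\cutzo$, taking an unbounded $S$ with no $a^2$-element finite subset, forming the set $R$ of every $a$-th element of $S$, and then doing a case split on whether $R$ is bounded or unbounded to exhibit an unbounded set with no $a$-element finite subset. You instead argue directly, fixing $a,b\in\cutzo$ and an unbounded $A$, and showing that $P=\{k:A\text{ has a finite subset of size }kb\}$ is a $\Sigma^0_1$-definable cut (the successor step using $b\in\cutzo$ on the tail $A\cap(t,\infty)$), whence $a\in\cutzo\subseteq P$. Your approach avoids the case analysis and makes the role of the characterization ``$\cutzo=$ intersection of all $\Sigma^0_1$-cuts'' more explicit; the paper's approach is slightly more hands-on with the witnessing set. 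Your parenthetical alternative --- thinning $A$ to every $b$-th element and applying $a\in\cutzo$ to the result --- is essentially the paper's construction read in the direct rather than contrapositive direction.
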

\begin{proof}
It is enough to check that if every unbounded set has a finite subset of cardinality $x$, then every unbounded set has a finite subset of cardinality $x^2$. So suppose $a^2\notin\cutzo$, and let $S$ be an unbounded set enumerated by a $\sigzo$-cut $I$ that does not have a finite subset of cardinality $a^2$. Note that $a^2$ is above $I$. Let $R$ be the $\delo(S)$-definable set consisting of every $a$-th element of $S$, that is 
\begin{equation*}
    R = \{ x\in S\colon  \exists i<a^2 \,(`x \text{ is the } i\text{-th element of } S\text{'}\,\wedge\, `a\text{ divides }i\text{'}) \}.
\end{equation*}
If $R$ is finite, then the set $S\setminus [0,\, \max(R)]$ is unbounded and 
does not contain a finite subset of cardinality $a$. If $R$ is unbounded, then it itself does not contain a finite subset of cardinality $a$, because otherwise $S$ would have a finite subset of cardinality $a^2$, contrary to our assumption. In any case, $a\notin\cutzo$.
\end{proof}

The cut $\cutzo$ is proper if and only if $\iszo$ fails. In such a case one can ask whether $\cutzo$ is itself $\sigzo$-definable. Both possibilities are consistent with $\rcas+\neg\iszo$. For instance, if $\omega$ is $\sigzo$-definable, then clearly $\cutzo=\omega$. 
For the other possibility, note that the smallest $\sigzo$-definable cut, if it exists, must necessarily satisfy $\iso$, so the theory $\rcas$ + `$\cutzo$ is $\sigzo$-definable' interprets $\iso$. On the other hand, it is known that $\iso$ proves $\Con(\rcas)$, so for G\"odel-style reasons we must have $\rcas\nvdash$ `$\cutzo$ is $\sigzo$-definable'.


To study proof size rigorously we have to fix our proof system. Since in this context it is convenient to think about proofs as finite sequences, our choice is a Hilbert-style calculus. 
Let us note that we lose no generality here, as there are polynomial-time translations between usual Hilbert-style systems and sequent calculi with the cut rule \cite{Eder_book}. 
Officially, we use $\neg$ and $\Rightarrow$ as the only logical connectives and $\forall$ as the only quantifier. We will take advantage of this convention when proving theorems by induction on formula complexity.
However, to enhance readability, we will often use the other connectives and existential quantifier as abbreviations for their usual translations, e.g. $\varphi\wedge\psi$ is shorthand for $\neg(\varphi\Rightarrow\neg\psi)$. When we say `$\theta$ contains $\varphi$ as a conjunct' we mean that $\theta$ is of the above form.
Also, we adopt in the obvious way the definition of formula classes like $\sign$, $\pin$ to our proof system with only one quantifier.

For a given language $\mathcal{L}$, our logical axioms are generated by finitely many schemes as in \cite[Definition 0.10]{HP_Book} with the difference that our only inference rule is modus ponens, and the rule for quantifiers is replaced with the additional axiom schemes $\forall x(\varphi\Rightarrow\psi)\Rightarrow (\forall x\varphi \Rightarrow \forall x\psi)$ and $\theta\Rightarrow\forall x\theta$, where $x$ is not free in $\theta$. 
Since all the languages that we work with have equality, we also include the usual equality axioms. 

We adopt the usual measure of the size of a syntactic object, namely the number of symbols occurring in it, and denote it by vertical lines $\lvert \,\cdot\, \rvert$.

From now on, by a theory we mean just a set of sentences. In particular, we do not require a theory to be deductively closed, since otherwise comparisons of proof size would become trivial.
In the following, it will be convenient for us to work only with finite theories. Thus, we fix finite axiomatizations for $\idz+\expax$, $\rcas$ and $\rca$ using partial satisfaction predicates. Also, we assume that the finitely many instances of $\delz$- and $\delzz$-induction schemes occurring in these axiomatizations are formulated as $\pio$ and $\pizo$ sentences, respectively. 
It can be easily shown that these finite axiomatizations polynomially simulate the usual infinite axiomatizations given by induction and comprehension schemes.


\section{Forcing interpretations}\label{section:FI general}

Recall that an interpretation in the usual sense of a theory $T_1$ in a theory $T_2$ consists in, first, specifying in $T_2$ the domain of the interpretation and translating all formulas of $\mathcal{L}(T_1)$ into $\mathcal{L}(T_2)$, and then showing that the translations of all axioms of $T_1$ are provable in $T_2$. Such an interpretation provides a uniform description of a model of $T_1$ in any model of $T_2$. On the other hand, a forcing interpretation can be intuitively seen as a uniform description of an approximation to a generic model of $T_1$ in any model of $T_2$. 
For a forcing interpretation, one firstly defines a partially ordered set $\mathrm{Cond}$ of \textit{conditions}, a set $\mathrm{Name}$ of \textit{names}, and a relation $\validname{s}{}{v}$ on $\mathrm{Cond}\!\times\!\mathrm{Name}$, pronounced `$s$ forces $v$ to be a valid name'. Then, for every formula $\varphi$ of $\mathcal{L}(T_1)$, one recursively defines the relation $\forces{s}{}{\varphi}$, read as 
`$s$ forces that $\varphi$ holds'. Finally, one shows that $T_2$ proves that every condition forces all axioms of $T_1$.

Our presentation of forcing interpretations follows closely the one from \cite{pfsize}, which in turn is based on Avigad's work \cite{Avigad_paper}, where he introduced the technique and strengthened the $\pioo$-conservativity of $\wkl$ over $\rca$ to polynomial simulation by formalizing Harrington's tree forcing.
The most important difference between our presentation and that from \cite{pfsize} is that we define the forcing translation for all formulas of a given language straightforwardly without a detour through so-called simple formulas, which are roughly translations into a relational language. The detour was made in order to avoid potential ambiguities related to translating formulas with complex terms. However, it should be clear from our constructions that one does not need simple formulas in many natural cases, especially in a situation like ours when terms of the interpreted language are in direct correspondence with terms of the interpreting one.  

We follow standard conventions regarding forcing notation and use lower-case letters $s, s', s''$ etc. as variables for forcing conditions. Our default variables for names are both lower- and (when the interpreted theory is in the two-sorted language) upper-case letters such as $v, w, V, W$. We write $\vv$, $\VV$ for finite tuples of names.

\begin{definition}[{\cite[Definitions 1.5 and 1.6]{pfsize}}]\label{definition:FT}
A \emph{forcing translation $\tau$ of a language~$\mathcal{L}_1$ to a language~$\mathcal{L}_2$}
 consists of $\mathcal{L}_2$-formulas:
  \begin{equation}\label{eq:forcing translation formulas}
   \cond{s}{\tau},\quad
   s'\trianglelefteqslant_\tau s,\quad
   \nam{v}{\tau},\quad
   \validname{s}{\tau}{v},\quad
   s\Vdash_\tau \alpha(\vv),
  \end{equation}
for every atomic $\mathcal{L}_1$~formula~$\alpha(\xx)$,
    such that the following conditions are satisfied.
   \begin{enumerate}
   \renewcommand{\theenumi}{FT\arabic{enumi}}
   \renewcommand{\labelenumi}{(\theenumi)}
   \item For every formula in (\ref{eq:forcing translation formulas}), the only free variables are exactly those shown. 
   \item $s'\trianglelefteqslant_\tau s$ contains
    $s'\in\text{Cond}_\tau\wedge s\in\text{Cond}_\tau$ as a conjunct.
   \item $\validname{s}{\tau}{v}$ contains $s\in\text{Cond}_\tau\wedge \nam{v}{\tau}$ as a conjunct.
   \item $s\Vdash_\tau \alpha(\vv)$ contains\label{item:ftr/f>defd}
    $\validname{s}{\tau}{\vv}$ as a conjunct,
     for every atomic formula $\alpha(\vv)$.

   \item If $\alpha(\uu,v)$ is an atomic $\mathcal{L}_1$~formula and $w$~is a variable,
     then the formulas \label{item:ftr/varsubst}
     \begin{equation*}
      \bigl(s\Vdash_\tau\alpha(\uu,v)\bigr)[w/v]
      \quad\text{and}\quad s\Vdash_\tau\bigl(\alpha(\uu,v)[w/v]\bigr)
     \end{equation*} are the same.
\end{enumerate}
\noindent For complex formulas the forcing relation $\Vdash_\tau$ is defined inductively by the following clauses, where all formulas have exactly the free variables shown.

\begin{enumerate}
\renewcommand{\theenumi}{FT\arabic{enumi}}
\renewcommand{\labelenumi}{(\theenumi)}
\addtocounter{enumi}{5}  

    \item $\forces{s}{\tau}{\neg\varphi(\vv)}$ is \label{item:ftr/neg}
    \begin{equation*}
   \validname{s}{\tau}{\vv}\,\,
   \wedge\,\,\, \forall \lcond{s'}{s}{\tau}\,\bigl(s'\nVdash_\tau\varphi(\vv)\bigr).
  \end{equation*}

   \item $\forces{s}{\tau}{\varphi(\vv, \uu)\Rightarrow\psi(\vv, \ww)}$ is \label{item:ftr/then}
    \begin{multline*}
   \validname{s}{\tau}{\vv} \,\,
   \wedge\,\,\,\validname{s}{\tau}{\uu}\,\,
   \wedge\,\,\, \validname{s}{\tau}{\ww}\,\,
   \wedge\\ \forall \lcond{s'}{s}{\tau}\,\,\exists \lcond{s''}{s'}{\tau}  \bigl(s'\Vdash_{\tau} \varphi(\vv, \uu) \Rightarrow s''\Vdash_{\tau}\psi(\vv, \ww)  \bigr).
  \end{multline*}

  \item $\forces{s}{\tau}{\forall w\,\varphi(w, \vv)}$ is \label{item:ftr/forall} 
    \begin{equation*}
        \validname{s}{\tau}{\vv}\,\wedge\,\,\forall w\,\forall s'\condless{\tau}s\,\exists s''\condless{\tau} s' \bigl(\validname{s'}{\tau}{w}\, \Rightarrow\, s''\Vdash_\tau\varphi(w, \vv) \bigr).
    \end{equation*}

   \end{enumerate}
\end{definition}

\noindent
In the above definition 
$\validname{s}{\tau}{\vv}$ is shorthand for the formula $\bigwedge^n_{i=1} \validname{s}{\tau}{v_i}$,~where $n$ is the length of the tuple $\vv$. We write $\nforces{s}{\tau}{\varphi}$ for
$\neg(\forces{s}{\tau}{\varphi})$.
Expressions like $\forall\cond{s}{\tau}$ and $\forall\nam{v}{\tau}$ are abbreviations for $\forall s\,(\cond{s}{\tau}\Rightarrow \dots)$ and $\forall v\,(\nam{v}{\tau}\Rightarrow \dots)$.
Later, to simplify notation, we will often relax the requirement on displayed free variables and will not distinguish between the free variables occurring in the antecedent and the consequent of an implication.

By straightforward induction on formula complexity one verifies that the technical properties (FT4) and (FT5) essentially hold for all formulas of a given language $\mathcal{L}_1$, see {\cite[Lemma 1.7]{pfsize}}.

The next definition states what is needed for a forcing translation to be a forcing interpretation. To put it simply, the interpreting theory has to prove that the set of forcing conditions is a preorder, that the rules of first-order logic are preserved and that the axioms of an interpreted theory are always forced.

\begin{definition}[{\cite[Definition 1.8]{pfsize}}]\label{definition:FI}
Let $T_1$ and $T_2$ be theories and let $\tau$ be a forcing translation of $\mathcal{L}(T_1)$ to $\mathcal{L}(T_2)$. Then $\tau$ is a \emph{forcing interpretation of $T_1$ in $T_2$} if the following properties are provable in $T_2$.   
\medskip

\noindent The relation $\trianglelefteqslant_\tau$ is a nonempty preorder:
\begin{enumerate}
   \renewcommand{\theenumi}{FI\arabic{enumi}}
   \renewcommand{\labelenumi}{(\theenumi)}
   
    \item $\exists s \bigl( \cond{s}{\tau} \bigr)$,\label{item:fi/non-empty-preorder}
    \item $\forall \cond{s}{\tau} \bigl( \lcond{s}{s}{\tau} \bigr) $,\label{item:fi/preorder:reflexive}
    \item $\forall \cond{s, s', s''}{\tau} \bigl( \lcond{s''}{s'}{\tau} \,\wedge\, \lcond{s'}{s}{\tau} \,\Rightarrow\, \lcond{s''}{s}{\tau} \bigr)$.\label{item:fi/preorder:transitive}
\end{enumerate}
\medskip
\noindent Any generic model is nonempty:
\begin{enumerate}
   \renewcommand{\theenumi}{FI\arabic{enumi}}
   \renewcommand{\labelenumi}{(\theenumi)}   
   \addtocounter{enumi}{3}
    \item $\forall \cond{s}{\tau}\,\, \exists s'\condless{\tau} s\, \exists \nam{v}{\tau} \,\,\validname{s'}{\tau}{v} $.\label{item:fi/non-empty-generic}
\end{enumerate}
\medskip
\noindent The forcing relation $\Vdash_{\tau}$ is monotone:
\begin{enumerate}
   \renewcommand{\theenumi}{FI\arabic{enumi}}
   \renewcommand{\labelenumi}{(\theenumi)}   
   \addtocounter{enumi}{4}
    \item $\forall \cond{s, s'}{\tau}\,\,\forall \nam{v}{\tau} \bigl( \validname{s}{\tau}{v} \wedge\, \lcond{s'}{s}{\tau}\,\Rightarrow\,\validname{s'}{\tau}{v} \bigr)$, \label{item:fi/monotonicity_names}
    
    \item $\forall \cond{s, s'}{\tau}\,\,\forall \nam{\vv}{\tau} \bigl(\forces{s}{\tau}{\alpha(\vv)}  \,\wedge\, \lcond{s'}{s}{\tau} \,\Rightarrow\, \forces{s'}{\tau}{\alpha(\vv)} \bigr)$,\\
    for each atomic formula $\alpha(\xx)$ of $\mathcal{L}(T_1)$.\label{item:fi/monotonicity_atoms}
\end{enumerate}
\medskip
\noindent The axioms of equality are forced, and the values of functions are defined: 
\begin{enumerate}
   \renewcommand{\theenumi}{FI\arabic{enumi}}
   \renewcommand{\labelenumi}{(\theenumi)}   
   \addtocounter{enumi}{6}    
   \item $\forall \cond{s}{\tau}\,\,\forall\nam{v}{\tau} \,\bigl( \validname{s}{\tau}{v}\,\,\Rightarrow\,\, \forces{s}{\tau}{v=v} \bigr)$,\label{item:fi/equality:reflexive}
   \item $\forall \cond{s}{\tau}\,\,\forall\nam{v, v'}{\tau} \,\bigl( \forces{s}{\tau}{v=v'}\,\,\Rightarrow\,\, \forces{s}{\tau}{v'=v} \bigr)$,\label{item:fi/equality:symmetric}
   \item $\forall \cond{s}{\tau}\,\,\forall\nam{v, v', v''}{\tau} \,\bigl( \forces{s}{\tau}{v=v'\,\wedge\,v'=v''}\,\,\Rightarrow\,\, \forces{s}{\tau}{v=v''} \bigr)$,\label{item:fi/equality:transitive}
   \item  $\forall \cond{s}{\tau}\,\,\forall\nam{\vv}{\tau} \,\Bigl( \validname{s}{\tau}{\vv}\,\,\Rightarrow\,\,\\  {\;\;\;\;\;\;\;\;\;\;\;\;\;\;\;\;\;}\bigl(\forall \lcond{s'}{s}{\tau} \,\exists \lcond{s''}{s'}{\tau}\,\exists\nam{w}{\tau}\,(\forces{s''}{\tau}{f(\vv)=w})\,\,\wedge\,\,$\\
   ${\;\;\;\;\;\;\;\;\;\;\;\;\;\;\;\;\;\;\;\;\;\;\;\;\;\;\;\;\;\;\;\;\;\;\;\;}\forces{s}{\tau}{\forall w, w'\,(w=f(\vv) \,\wedge\,w'=f(\vv)\,\,\Rightarrow\,\,w=w')}   \bigr)
   \Bigr)$,\\
   for each function symbol $f$ of $\mathcal{L}(T_1),$\label{item:fi/equality:functions}

 \item $\forall\cond{s}{\tau}\,\forall \nam{\uu, \vv, w}{\tau} \, \bigl(
  \forces{s}{\tau}{w=t(\vv)}\,\Rightarrow   \\ 
{\;\;\;\;\;\;\;\;\;\;\;\;\;\;\;\;\;\;\;\;\;\;\;\;\;\;\;\;\;\;\;\;\;\;\;\;\;\;\;\;\;\;\;\;\;\;\;\;\;\;\;\;\;\;\;\;\;\;\;}\bigl( \forces{s}{\tau}{\alpha(\uu, w)} \,     \Leftrightarrow\, \forces{s}{\tau}{\alpha(\uu, t(\vv))} \bigr)    \bigr),$
   
   for each term $t(\xx)$ and each atomic formula $\alpha(\yy, z)$ of $\mathcal{L}(T_1)$.\label{item:fi/substitution}
    
\end{enumerate}
\medskip
\noindent Density conditions:
\begin{enumerate}
   \renewcommand{\theenumi}{FI\arabic{enumi}}
   \renewcommand{\labelenumi}{(\theenumi)}   
   \addtocounter{enumi}{11}    
   \item $\forall\cond{s}{\tau}\,\forall\nam{v}{\tau}\, \bigl( 
   \forall\lcond{s'}{s}{\tau}\,\exists\lcond{s''}{s'}{\tau}\,(\validname{s''}{\tau}{v})\,\Rightarrow\,\validname{s}{\tau}{v}
   \bigr)$, \label{item:fi/density_names}
   
   \item $\forall\cond{s}{\tau}\,\forall\nam{\vv}{\tau}\, \Bigl( 
   \forall\lcond{s'}{s}{\tau}\,\exists\lcond{s''}{s'}{\tau}\,(\forces{s''}{\tau}{\alpha(\vv)})\,\Rightarrow\,\forces{s}{\tau}{\alpha(\vv)}
   \Bigr)$,\\
   for each atomic formula $\alpha(\xx)$ of $\mathcal{L}(T_1)$. \label{item:fi/density_atoms} 
    
\end{enumerate}
\medskip
\noindent The axioms of $T_1$ are forced:
\begin{enumerate}
   \renewcommand{\theenumi}{FI\arabic{enumi}}
   \renewcommand{\labelenumi}{(\theenumi)}   
   \addtocounter{enumi}{13}    
    \item $\forall\cond{s}{\tau}\,\forces{s}{\tau}{\sigma}$,\\
    for each axiom $\sigma$ of $T_1$.\label{item:fi/forcing axioms}
\end{enumerate}

\end{definition}
\noindent
A forcing translation $\tau$ that satisfies conditions (FI1)-(FI13) is called a \textit{forcing interpretation of $\mathcal{L}(T_1)$ in $T_2$}. By \cite[Proposition 1.14]{pfsize}, it follows that such a $\tau$ is a forcing interpretation of pure logic formulated in the language $\mathcal{L}(T_1)$.

A forcing interpretation $\tau$ is called \textit{polynomial} if there exists a polynomial-time algorithm which constructs proofs of (FI1)-(FI14) for the appropriate inputs. 
Let us note that one needs to describe such an algorithm only for conditions (\ref{item:fi/monotonicity_atoms}), (\ref{item:fi/equality:functions}), (\ref{item:fi/substitution}), (\ref{item:fi/density_atoms}), (\ref{item:fi/forcing axioms}), which are given by schemes. The other conditions are expressed by single sentences, so the construction of their proofs takes a fixed amount of time. 
Note that if $\tau$ is a polynomial forcing interpretation, then the translation $\varphi\mapsto(\forces{s}{\tau}{\varphi})$ is also constructed by a polynomial-time algorithm (it can be simply read off from a proof of (\ref{item:fi/monotonicity_atoms}) for an atomic formula $\alpha$, and for a complex formula $\varphi$ one proceeds by induction, according to clauses (\ref{item:ftr/neg})-(\ref{item:ftr/forall}) from \cref{definition:FT}, and this clearly takes time polynomial in $|\varphi|$).

If the set of forcing conditions has a greatest element, we denote it by $\mathbf{1}$, skipping the implicit subscript $\tau$, as it should always be clear from the context. By \cref{general monotonicity and dencity} below, it follows that $\forces{\mathbf{1}}{\tau}{\varphi}$ is equivalent to $\forall\cond{s}{\tau}(\forces{s}{\tau}{\varphi})$.
Note that if $\varphi$ is a universal statement $\forall x\,\psi(x)$, then $\forall\cond{s}{\tau}\,(\forces{s}{\tau}{\varphi})$ is implied by $\forall\cond{s}{\tau}\,\forall\nam{v}{\tau}\,\bigl(\validname{s}{\tau}{v}\,\Rightarrow\,\forces{s}{\tau}{\psi(v)}\bigr)$. In fact, the opposite implication holds as well, as can be shown using \cref{general monotonicity and dencity}.

Let us note that an interpretation between theories in the usual sense can be seen as a forcing interpretation with a one-element set of conditions. Also, forcing interpretations are closed under definition by cases: if there exist forcing interpretations of $T_1$ in $T_2+\sigma$ and $T_2+\neg\sigma$, then there exists a forcing interpretation of $T_1$ in $T_2$.


The following lemmas show that for any polynomial forcing interpretation, some basic properties can be verified uniformly in polynomial time. We skip the proofs, which can be found in \cite{pfsize} and \cite{kasia:rozprawa}. The first lemma states that for any polynomial forcing interpretation we can feasibly construct a proof that a contradiction is never forced.

\begin{lemma}[{\cite[Lemma 1.10]{pfsize}}]\label{no-contradiction-forced}
Let $\tau$ be a polynomial forcing interpretation of $\mathcal{L}(T_1)$ in $T_2$. Then there exists a polynomial-time algorithm which, given as input a formula $\varphi$ of $\mathcal{L}(T_1)$, outputs a proof in $T_2$ of the sentence:    
\begin{equation}\label{eq:not-forcing-contradiction}
\forall\cond{s}{\tau}\,\forall\nam{\vv}{\tau}\,\neg\bigl( s\Vdash\varphi(\vv) \, \wedge \, s\Vdash\neg\varphi(\vv)
\bigr).    
\end{equation}
\end{lemma}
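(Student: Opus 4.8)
The plan is to establish the key forcing-theoretic fact that no condition can force both $\varphi(\vv)$ and $\neg\varphi(\vv)$, and to do so feasibly and uniformly in $\varphi$. The natural approach is an induction on the complexity of $\varphi$, mirroring the recursive clauses (FT6)--(FT8) of \cref{definition:FT}, while keeping track of the sizes of the proofs produced at each stage so that the overall construction runs in polynomial time. The base case concerns atomic formulas $\alpha(\vv)$, and here the statement $\forall\cond{s}{\tau}\,\forall\nam{\vv}{\tau}\,\neg(\forces{s}{\tau}{\alpha(\vv)}\wedge\forces{s}{\tau}{\neg\alpha(\vv)})$ should follow quickly from the definition: unfolding (FT6), $\forces{s}{\tau}{\neg\alpha(\vv)}$ asserts in particular that every $s'\condless{\tau}s$ fails to force $\alpha(\vv)$, but reflexivity (FI2) gives $s\condless{\tau}s$, so $s\nVdash_\tau\alpha(\vv)$, contradicting $\forces{s}{\tau}{\alpha(\vv)}$. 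This requires only the proof of (FI2), which is a single sentence and hence available in constant time.

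For the inductive step I would handle the three connective cases. The negation case $\varphi=\neg\psi$ is again essentially immediate from (FT6) and reflexivity: if $\forces{s}{\tau}{\neg\psi(\vv)}$ then $s\nVdash_\tau\psi(\vv)$, so $s$ cannot force $\neg\neg\psi(\vv)$ either (since the latter, by (FT6), would force $\psi$ at $s$ via reflexivity)--in fact one should simply observe $\forces{s}{\tau}{\neg\psi}$ directly contradicts $\forces{s}{\tau}{\neg\neg\psi}$ using reflexivity once more, without needing the inductive hypothesis. The implication case $\varphi=\psi\Rightarrow\chi$ and the universal case $\varphi=\forall w\,\psi(w,\vv)$ are where the real work lies: here one unfolds (FT7), respectively (FT8), for $\forces{s}{\tau}{\varphi}$, and (FT6) for $\forces{s}{\tau}{\neg\varphi}$, then uses density/genericity together with the inductive hypothesis. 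For the implication case, from $\forces{s}{\tau}{\neg(\psi\Rightarrow\chi)}$ one gets (roughly) a dense set of conditions forcing $\psi$ but, below them, no condition forcing $\chi$; combined with $\forces{s}{\tau}{\psi\Rightarrow\chi}$ and a passage to a suitable $s''\condless{\tau}s'$ one derives a single condition forcing both $\chi$ and $\neg\chi$, contradicting the inductive hypothesis for $\chi$. The universal case is analogous, instantiating the quantified name $w$ appropriately and invoking (FI4) (nonemptiness of the generic) together with the inductive hypothesis for $\psi(w,\vv)$.

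The main obstacle is not the mathematical content of any individual step--each is a routine manipulation of the forcing clauses using reflexivity, transitivity, density, and nonemptiness (i.e. (FI1)--(FI4), (FI12)--(FI13))--but rather the bookkeeping needed to certify that the resulting proof-construction algorithm is polynomial-time. Concretely, one must check that the proof produced for $\varphi$ has size bounded by a polynomial in $|\varphi|$: the proofs for the connective cases are assembled from the proof for the immediate subformula(s) plus a fixed amount of additional logical scaffolding whose size is linear in the size of the instances of (FT6)--(FT8) and of the relevant (FI$n$) axioms being invoked, and these latter sizes are themselves polynomial in $|\varphi|$ because $\tau$ is a polynomial forcing interpretation (so proofs of the schematic conditions (FI6), (FI12), (FI13) for atomic formulas appearing in $\varphi$ are polynomial in their inputs). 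Since the recursion has depth at most $|\varphi|$ and adds only polynomially much at each level, the total running time and output size are polynomial. I would organize the write-up so that the recursive case analysis and the size estimate are carried out in parallel, and simply cite \cite[Lemma 1.10]{pfsize} and \cite{kasia:rozprawa} for the full details, as the statement already indicates.
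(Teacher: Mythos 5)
Your approach would eventually succeed, but it is substantially over-engineered, and this matters here because the whole point of the lemma is that the proof-construction algorithm is simple enough to be transparently polynomial-time. Look again at your base case and your negation case: in both you unfold $\forces{s}{\tau}{\neg\,\cdot\,}$ via (FT6), instantiate the universal quantifier at $s' = s$ using reflexivity (FI2), and derive $s \nVdash \cdot$, which directly contradicts $s \Vdash \cdot$. Nothing in that argument depends on $\alpha$ being atomic or on $\psi$ being the immediate subformula of a negation. Clause (FT6) defines $\forces{s}{\tau}{\neg\varphi(\vv)}$ for \emph{every} formula $\varphi$ as containing the conjunct $\forall\,\lcond{s'}{s}{\tau}\,(s'\nVdash_\tau\varphi(\vv))$. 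So for arbitrary $\varphi$, the assumption $\forces{s}{\tau}{\varphi(\vv)}\wedge\forces{s}{\tau}{\neg\varphi(\vv)}$ together with (FI2) yields $\forces{s}{\tau}{\varphi(\vv)}\wedge s\nVdash_\tau\varphi(\vv)$, an outright contradiction. There is no induction on formula complexity, no implication case, no universal case, no appeal to density or genericity, and no inductive hypothesis for subformulas.

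Concretely, this means your claim that ``the implication case and the universal case are where the real work lies'' is wrong: there is no real work in those cases either, and in fact nothing in your sketch for them survives once you notice they reduce to the same one-line contradiction. The only ingredients are the forcing translation of $\varphi$ (which is constructible in time polynomial in $|\varphi|$ because $\tau$ is a polynomial forcing interpretation), the single sentence (FI2) (constant size), and a fixed number of logical steps. The algorithm therefore just substitutes $\forces{s}{\tau}{\varphi(\vv)}$ into a constant-size proof template, and the polynomial bound is immediate without any of the bookkeeping over inductive stages that occupies the second half of your proposal. Your plan as written is not \emph{incorrect}, but it misses the structural observation that makes the lemma trivial, and consequently manufactures complexity -- both mathematical and algorithmic -- that is not there.
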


The next lemma generalizes the monotonicity (\ref{item:fi/monotonicity_atoms}) and density (\ref{item:fi/density_atoms}) conditions to all formulas of the language of the interpreted theory.

\begin{lemma}[{\cite[Lemma 1.11]{pfsize}}]\label{general monotonicity and dencity} 
Let $\tau$ be a polynomial forcing interpretation of $\mathcal{L}(T_1)$ in $T_2$. Then there exists a polynomial-time algorithm, which given as input a formula $\varphi$ of $\mathcal{L}(T_1)$, outputs proofs in $T_2$ of the sentences:

\begin{enumerate}[(a)] 
\item $\forall\cond{s, s'}{\tau}\,\forall\nam{\vv}{\tau}\,\bigl(\bigl(\lcond{s'}{s}{\tau}\,\wedge\,\forces{s}{\tau}{\varphi(\vv)}\bigr)\,\Rightarrow\,\forces{s'}{\tau}{\varphi(\vv)}\bigr)$,
\item $\forall\cond{s}{\tau}\,\forall\nam{\vv}{\tau}\,\bigl(  \forall \lcond{s'}{s}{\tau} \,\exists \lcond{s''}{s'}{\tau} (s''\Vdash\varphi(\vv)) \, \Rightarrow \,  s\Vdash\varphi(\vv)
\bigr)$.    
\end{enumerate}
\end{lemma}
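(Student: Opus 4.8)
The plan is to prove both parts by a uniform induction on the complexity of $\varphi$, following the clauses (\ref{item:ftr/neg})--(\ref{item:ftr/forall}) of \cref{definition:FT}, and reading off the polynomial-time algorithm directly from this induction: for each formula $\varphi$ the output proof is assembled by plugging the proofs obtained for the immediate subformulas into fixed proof templates, so the time is polynomial in $|\varphi|$. The base case ($\varphi$ atomic) is exactly conditions (\ref{item:fi/monotonicity_atoms}) and (\ref{item:fi/density_atoms}), whose proofs are supplied by the assumption that $\tau$ is polynomial. Throughout, I would freely use the logical axioms and modus ponens of our Hilbert-style calculus, the preorder properties (\ref{item:fi/preorder:reflexive})--(\ref{item:fi/preorder:transitive}), and the conjunct-containment facts (FT2)--(FT4), all of which have proofs of fixed size.

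First I would treat (a) and (b) together, but it is cleaner to do (b) (``density implies forcing'') first for the negation and implication clauses, since the inductive argument for (a) (``monotonicity'') in the $\Rightarrow$ case naturally invokes (b) for a subformula. For the negation step, suppose $\lcond{s'}{s}{\tau}$ and $\forces{s}{\tau}{\neg\varphi(\vv)}$; unfolding (\ref{item:ftr/neg}), the latter says $\validname{s}{\tau}{\vv}$ together with $\forall\lcond{s''}{s}{\tau}(s''\nVdash\varphi)$. Monotonicity of names (\ref{item:fi/monotonicity_names}) gives $\validname{s'}{\tau}{\vv}$, and transitivity of $\trianglelefteqslant_\tau$ reduces $\forall\lcond{s''}{s'}{\tau}$ to an instance of $\forall\lcond{s''}{s}{\tau}$, giving $\forces{s'}{\tau}{\neg\varphi}$; this needs no induction hypothesis. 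For the density step at negation, one unfolds the definitions and, using transitivity and reflexivity of the preorder, shows the density clause for $\neg\varphi$ is a logical consequence of its own shape — again no induction hypothesis is needed, which matches the fact that the $\neg$ clause already has a built-in ``$\forall\lcond{s'}{s}{\tau}$'' that behaves densely.

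The implication clause (\ref{item:ftr/then}) is where both the induction hypotheses get used and where I expect the bookkeeping to be heaviest. For monotonicity (a) at $\varphi\Rightarrow\psi$: assume $\lcond{s'}{s}{\tau}$ and $\forces{s}{\tau}{\varphi\Rightarrow\psi}$. The valid-name conjuncts descend by (\ref{item:fi/monotonicity_names}). For the main conjunct $\forall\lcond{t}{s}{\tau}\exists\lcond{t'}{t}{\tau}(t\Vdash\varphi\Rightarrow t'\Vdash\psi)$, given $\lcond{t}{s'}{\tau}$ we get $\lcond{t}{s}{\tau}$ by transitivity and apply the hypothesis at $s$ to this $t$ — so here monotonicity at the implication level follows just from transitivity, with no appeal to the subformula IHs. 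The subformula induction hypotheses are in fact needed for density (b) at $\varphi\Rightarrow\psi$, and this is the crux: to show $\forall\lcond{s'}{s}{\tau}\exists\lcond{s''}{s'}{\tau}(\forces{s''}{\tau}{\varphi\Rightarrow\psi})$ implies $\forces{s}{\tau}{\varphi\Rightarrow\psi}$, one fixes $\lcond{t}{s}{\tau}$ assuming $t\Vdash\varphi$, and must produce $\lcond{t'}{t}{\tau}$ with $t'\Vdash\psi$; the hypothesis yields some $\lcond{t''}{t}{\tau}$ with $t''\Vdash\varphi\Rightarrow\psi$, monotonicity (a) for $\varphi$ (the IH) gives $t''\Vdash\varphi$, unfolding gives $\lcond{t'}{t''}{\tau}$ with $t'\Vdash\psi$, and one more transitivity finishes — but one must also check that, conversely, $\forces{s}{\tau}{\varphi\Rightarrow\psi}$ together with every $t''$ below forcing $\varphi$ gives a witness via density (b) for $\psi$. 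The analogous but slightly longer argument handles the universal clause (\ref{item:ftr/forall}), using monotonicity/density of names together with the IH for the matrix $\varphi(w,\vv)$.

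The main obstacle I anticipate is not any single deep idea but the need to make the syntactic templates genuinely uniform and of bounded size: each inductive step must be a single proof schema into which the subproofs are substituted, and one must verify that substituting a proof of size $n$ for a subformula yields a proof whose size grows by only an additive constant (plus the cost of copying), so that the total size is $O(|\varphi|)$ times a constant. Since the clauses of \cref{definition:FT} quantify only over conditions and names and use only the fixed logical machinery plus (FI2), (FI3), (\ref{item:fi/monotonicity_names}), (\ref{item:fi/monotonicity_atoms}), (\ref{item:fi/density_atoms}), this is bookkeeping rather than mathematics, and I would relegate the explicit template construction to a remark, citing \cite{pfsize} and \cite{kasia:rozprawa} for the routine details exactly as the paragraph preceding the lemma announces.
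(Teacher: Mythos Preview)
Your overall approach---induction on formula complexity with fixed templates at each step---is the standard one and is what the cited references do; the paper itself does not give a proof but defers to \cite{pfsize} and \cite{kasia:rozprawa}. So there is nothing substantive to compare.

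However, there is one genuine gap in your sketch. You claim that the density step (b) for $\neg\varphi$ needs no induction hypothesis, but it does. Assuming $\forall\lcond{s'}{s}{\tau}\,\exists\lcond{s''}{s'}{\tau}\,(\forces{s''}{\tau}{\neg\varphi})$, you must show $\forall\lcond{t}{s}{\tau}\,(t\nVdash_\tau\varphi)$. Given $\lcond{t}{s}{\tau}$, the hypothesis applied to $s'=t$ yields $\lcond{s''}{t}{\tau}$ with $\forces{s''}{\tau}{\neg\varphi}$, hence $s''\nVdash_\tau\varphi$. But to conclude $t\nVdash_\tau\varphi$ from this you need: if $t\Vdash_\tau\varphi$ then $s''\Vdash_\tau\varphi$---and that is precisely monotonicity (a) for $\varphi$, the induction hypothesis. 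Transitivity and reflexivity alone are not enough. This is the same pattern you correctly identify for density at $\varphi\Rightarrow\psi$; it recurs here because $\neg\varphi$ is essentially $\varphi\Rightarrow\bot$.

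A smaller point: the ``conversely'' clause at the end of your implication paragraph is confused. Density (b) is a one-directional implication; once you have produced $\lcond{t'}{t}{\tau}$ with $t'\Vdash_\tau\psi$ from $t\Vdash_\tau\varphi$, you are done with that case. There is nothing to check in the other direction, and density (b) for $\psi$ is not invoked.
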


As mentioned in the previous section, our official proof system has only two logical connectives $\neg$ and $\Rightarrow$ and the universal quantifier, so $\varphi\wedge\psi$ and $\exists x\,\varphi(x)$ are shorthand for $\neg(\varphi\Rightarrow\neg\psi)$ and $\neg\forall x\neg\varphi(x)$, respectively. The following simple lemma will be useful later when we force axioms that are expressed most naturally with logical symbols other than our official ones. The proof requires only unfolding the definitions of forcing complex formulas (\ref{item:ftr/neg})-(\ref{item:ftr/forall}) and applying \cref{general monotonicity and dencity}.

\begin{lemma}\label{forcing existential}
Let $\tau$ be a polynomial forcing interpretation of $\mathcal{L}(T_1)$ in $T_2$. Then there exists a polynomial-time algorithm which, given as input formulas $\varphi$ and $\psi$ of $\mathcal{L}(T_1)$, outputs a proof in $T_2$ of the sentences: 
\begin{enumerate}[(a)]
    \item $\forall\cond{s}{\tau}\bigl(\forces{s}{\tau}{\neg(\varphi\Rightarrow\neg\psi)}\,\, \Leftrightarrow\, (\forces{s}{\tau}{\varphi}\,\wedge\,\forces{s}{\tau}{\psi})\bigr)$;\label{eq:forcing conjunction}
    \item $\begin{aligned}[t]
     \forall\cond{s}{\tau}\bigl(\forces{s}{\tau}{\neg\forall x\neg\varphi(x)}\,\, &\Leftrightarrow\\ \forall \lcond{s'}{s}{\tau}\,&\exists \lcond{s''}{s'}{\tau} \,\exists v\,(\validname{s''}{\tau}{v} \,\wedge\, \forces{s''}{\tau}{\varphi(v)})\bigr). \label{eq:forcing existential}  
    \end{aligned}$
    \end{enumerate}
\end{lemma}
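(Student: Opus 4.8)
The plan is to prove both parts by unfolding the forcing clauses for the official connectives and then collapsing the nested quantifiers using density. I would do this for a polynomial forcing interpretation $\tau$, describing in each case a polynomial-time procedure that, given $\varphi$ (and $\psi$), first writes down the proofs supplied by \cref{general monotonicity and dencity} for the relevant subformulas, and then assembles a short derivation of the equivalence. Since the derivations are finite and uniform in the shape of $\varphi,\psi$, the time bound is immediate; the only real content is checking that the propositional/first-order manipulations go through in $T_2$ using monotonicity and density as black boxes.

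For part (a): by clause (\ref{item:ftr/neg}), $\forces{s}{\tau}{\neg\psi(\vv)}$ unfolds to $\validname{s}{\tau}{\vv}\wedge\forall\lcond{s'}{s}{\tau}(s'\nVdash_\tau\psi(\vv))$, and then $\forces{s}{\tau}{\varphi\Rightarrow\neg\psi}$ unfolds via (\ref{item:ftr/then}) to a conjunction of name-validity clauses together with $\forall\lcond{s'}{s}{\tau}\,\exists\lcond{s''}{s'}{\tau}(s'\Vdash_\tau\varphi\Rightarrow s''\Vdash_\tau\neg\psi)$. Negating, $\forces{s}{\tau}{\neg(\varphi\Rightarrow\neg\psi)}$ says there is $\lcond{s'}{s}{\tau}$ with $s'\Vdash_\tau\varphi$ and $s'\nVdash_\tau\neg\psi$ (plus that the outer forces-not clause holds at $s$ itself); here I would expand $s'\nVdash_\tau\neg\psi$, which by (\ref{item:ftr/neg}) means there is $\lcond{s''}{s'}{\tau}$ with $s''\Vdash_\tau\psi$. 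Now use \cref{general monotonicity and dencity}(a) to push $s'\Vdash_\tau\varphi$ down to $s''$, so $s''$ forces both $\varphi$ and $\psi$. To go back up to $s$, apply \cref{general monotonicity and dencity}(b) to each of $\varphi$ and $\psi$ separately: from $\forces{s''}{\tau}{\varphi}$ for a dense-below-$s$ family of $s''$ one gets $\forces{s}{\tau}{\varphi}$, and likewise for $\psi$. The converse direction — from $\forces{s}{\tau}{\varphi}\wedge\forces{s}{\tau}{\psi}$ to $\forces{s}{\tau}{\neg(\varphi\Rightarrow\neg\psi)}$ — is the easier half: given any $\lcond{s'}{s}{\tau}$, monotonicity gives $\forces{s'}{\tau}{\varphi}$ and $\forces{s'}{\tau}{\psi}$, hence $s'\Vdash_\tau\varphi$ but $s'\nVdash_\tau\neg\psi$ (the latter using that no contradiction is forced, \cref{no-contradiction-forced}, and monotonicity of $\psi$), witnessing the failure of $\forces{s}{\tau}{\varphi\Rightarrow\neg\psi}$ below $s$; one also notes the name-validity conjuncts are all subsumed.

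For part (b): by (\ref{item:ftr/forall}), $\forces{s}{\tau}{\forall x\neg\varphi(x)}$ is $\forall x\,\forall\lcond{s'}{s}{\tau}\,\exists\lcond{s''}{s'}{\tau}(\validname{s'}{\tau}{x}\Rightarrow s''\Vdash_\tau\neg\varphi(x))$ (conjoined with validity of the remaining names). Negating and expanding the inner $\neg\varphi$ via (\ref{item:ftr/neg}) yields: there exist $\lcond{s'}{s}{\tau}$ and a name $v$ with $\validname{s'}{\tau}{v}$ such that every $\lcond{s''}{s'}{\tau}$ has some $\lcond{s'''}{s''}{\tau}$ with $\forces{s'''}{\tau}{\varphi(v)}$ — i.e. $\forces{s'''}{\tau}{\varphi(v)}$ holds densely below $s'$. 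Applying \cref{general monotonicity and dencity}(b) to $\varphi(v)$ collapses this to $\forces{s'}{\tau}{\varphi(v)}$, and monotonicity of names (FI5) keeps $\validname{s'}{\tau}{v}$; this gives exactly the right-hand side with $s''=s'$ (and trivially refining to any $\lcond{s'}{s}{\tau}$). Conversely, given the right-hand side, for any $\lcond{s'}{s}{\tau}$ we get $\lcond{s''}{s'}{\tau}$ and $v$ with $\validname{s''}{\tau}{v}$ and $\forces{s''}{\tau}{\varphi(v)}$; by \cref{no-contradiction-forced} and monotonicity this prevents $s''\Vdash_\tau\neg\varphi(v)$ at any refinement, so no $\lcond{s''}{s'}{\tau}$ can serve as the witness required by $\forces{s}{\tau}{\forall x\neg\varphi(x)}$, whence $\nforces{s}{\tau}{\forall x\neg\varphi(x)}$; the name-validity side conjuncts are again automatic.

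The only mild obstacle is bookkeeping: several of the unfolded formulas carry extra name-validity conjuncts and extra layers of bounded condition-quantifiers (three deep in part (b) after expanding both $\forall$ and $\neg$), so one must be careful that transitivity of $\trianglelefteqslant_\tau$ (FI3) and the density lemma are invoked at the right nesting level. But there is no new idea: everything reduces to \cref{general monotonicity and dencity}, \cref{no-contradiction-forced}, and transitivity/monotonicity of the preorder, each applied a bounded number of times, so the resulting proof is constructed in time polynomial in $|\varphi|+|\psi|$ as required.
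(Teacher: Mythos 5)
Your proposal matches the paper exactly: the paper gives no detailed proof of this lemma, only remarking that it ``requires only unfolding the definitions of forcing complex formulas (FT6)--(FT8) and applying \cref{general monotonicity and dencity},'' and that is precisely what you do, with the correct additional appeals to monotonicity of names, \cref{no-contradiction-forced}, and transitivity of $\trianglelefteqslant_\tau$.

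One bookkeeping slip in part (b) is worth repairing. You begin by unfolding $\forces{s}{\tau}{\forall x\,\neg\varphi(x)}$ via (FT8), negate that, and then say the resulting single witness $\lcond{s'}{s}{\tau}$ ``trivially refines'' to an arbitrary $\lcond{s'}{s}{\tau}$, but that refinement is not trivial: a witness below one $s'$ gives you nothing below an incomparable $s'_0\trianglelefteqslant s$. The correct order of operations is to first unfold the \emph{outer} negation via (FT6), which already produces $\forall\lcond{s_0}{s}{\tau}\,\bigl(s_0\nVdash_\tau\forall x\,\neg\varphi(x)\bigr)$, and only then unfold $s_0\nVdash_\tau\forall x\,\neg\varphi(x)$ at each $s_0$ to extract $v$ and $\lcond{s''}{s_0}{\tau}$ with $\validname{s''}{\tau}{v}$ and, after one application of \cref{general monotonicity and dencity}(b), $\forces{s''}{\tau}{\varphi(v)}$. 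This delivers the $\forall\lcond{s'}{s}{\tau}$ on the right-hand side directly instead of by hand-waving; the same reordering also tidies up the forward direction of part (a), where you found $s''$ below a single $s'$ and then invoked density as if the family were already known to be dense below $s$.
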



The next lemma states the main property of polynomial forcing interpretations. It follows easily from the fact that, given a polynomial forcing interpretation $\tau$, one can construct in polynomial time proofs that logical axioms and modus ponens are forced by any $\cond{s}{\tau}$ \cite[Proposition 1.14]{pfsize}.

\begin{lemma}[{\cite[Corollary 1.15]{pfsize}}]\label{FI-translation of whole proofs}
Let $\tau$ be a polynomial forcing interpretation of $T_1$ in $T_2$. Then there exists a polynomial-time algorithm which, given as input a proof $\delta$ in $T_1$ of a formula $\varphi(\xx)$, outputs a proof in $T_2$ of the sentence:
\begin{equation}\label{eq:forcing theorems}
    \forall\cond{s}{\tau}\,\forall\nam{\vv}{\tau}\,\big( \validname{s}{\tau}{\vv}\,\Rightarrow\, \forces{s}{\tau}{\varphi(\vv)} \bigr).
\end{equation}
\end{lemma}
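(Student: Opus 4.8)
The plan is to prove \cref{FI-translation of whole proofs} by induction on the length of the proof $\delta$, where at each step we maintain the invariant that the current formula in $\delta$ is forced by every condition forcing its free-variable names to be valid. Concretely, suppose $\delta = \psi_0, \psi_1, \dots, \psi_n = \varphi$. The algorithm processes $\delta$ line by line, and for each line $\psi_k$ it appends to the output a short $T_2$-proof of $\forall\cond{s}{\tau}\,\forall\nam{\vv}{\tau}\,(\validname{s}{\tau}{\vv} \Rightarrow \forces{s}{\tau}{\psi_k(\vv)})$, where $\vv$ are the names substituted for the free variables of $\psi_k$. There are three cases for how $\psi_k$ arises: it is a logical axiom, it is a non-logical axiom $\sigma$ of $T_1$, or it is obtained from two earlier lines $\psi_i$ and $\psi_i \Rightarrow \psi_k$ by modus ponens.

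The case where $\psi_k$ is a non-logical axiom of $T_1$ is immediate from (\ref{item:fi/forcing axioms}), since the proof of $\forall\cond{s}{\tau}\,\forces{s}{\tau}{\sigma}$ is supplied directly by the polynomiality of $\tau$ (and adding the vacuous hypothesis on names costs only linear overhead). The case where $\psi_k$ is a logical axiom, and the case of modus ponens, are exactly the content of \cite[Proposition 1.14]{pfsize}, which states that a polynomial forcing interpretation constructs in polynomial time proofs that every condition forces all logical axioms (suitably relativized to validity of the relevant names) and that forcing is closed under modus ponens, i.e.\ from $\forces{s}{\tau}{\psi_i}$ and $\forces{s}{\tau}{\psi_i \Rightarrow \psi_k}$ one can feasibly derive $\forces{s}{\tau}{\psi_k}$ after possibly passing to a denser condition, then invoking \cref{general monotonicity and dencity}(b) to come back. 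So for modus ponens one splices together the previously produced proofs for lines $i$ and the implication line, together with this fixed modus-ponens-forcing lemma, obtaining a proof of the invariant for $\psi_k$ whose size is bounded by the sizes of the two predecessor proofs plus a polynomial in $|\psi_k|$.

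Summing over all lines, the total size of the output proof is bounded by $\sum_k \mathrm{poly}(|\psi_k|) \leq \mathrm{poly}(|\delta|)$, and the whole construction clearly runs in polynomial time since each line is handled by invoking one of finitely many fixed subroutines (themselves polynomial by hypothesis on $\tau$) on an input of size at most $|\delta|$. One technical point to handle with care is the bookkeeping of free variables versus names: the statement of the lemma substitutes a tuple $\vv$ of names for the free variables $\xx$ of $\varphi$, and one must check that this substitution is compatible with the substitutions occurring inside $\delta$ (instances of logical axioms, specializations of quantifiers) — this is where properties (FT5) and (FI11), i.e.\ the substitution conditions for the forcing translation, are used, exactly as in \cite{pfsize}.

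The main obstacle, such as it is, is not in this proof itself — which is a routine splicing argument once \cite[Proposition 1.14]{pfsize} is in hand — but in the fact that it rests entirely on that proposition, whose proof requires checking that every schematic logical axiom (propositional tautologies, the quantifier axioms $\forall x(\varphi\Rightarrow\psi)\Rightarrow(\forall x\varphi\Rightarrow\forall x\psi)$ and $\theta\Rightarrow\forall x\theta$, and the equality axioms) is forced, uniformly and feasibly. The only genuinely delicate points there are the quantifier axioms, whose forced versions must be unwound through clause (\ref{item:ftr/forall}) of \cref{definition:FT} and verified using density (\ref{item:fi/density_names}) and monotonicity, and the handling of terms via (FI11)/(FI12); but since we are entitled to cite \cite[Proposition 1.14]{pfsize} (and it is stated in the excerpt that this is what \cref{FI-translation of whole proofs} "follows easily" from), the present lemma needs only the outer induction on proof length described above.
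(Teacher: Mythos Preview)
Your proposal is correct and takes essentially the same approach as the paper: the paper merely states that the lemma ``follows easily'' from \cite[Proposition~1.14]{pfsize} (forcing of logical axioms and closure under modus ponens), and you spell out the straightforward induction on proof length that realizes this, with the three cases handled exactly as one would expect.
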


Finally, we define the reflection property, which expresses the connection between interpreted and interpreting theories. This is a crucial property in establishing a non-speedup result by means of a forcing interpretation.

\begin{definition}[{\cite[Definition 1.16]{pfsize}}]\label{def:reflection}
Let $\tau$ be a polynomial forcing interpretation of $T_1$ in $T_2$ and let $\Gamma$ be a set of sentences in their common language $\mathcal{L}(T_1)\cap\mathcal{L}(T_2)$. Then $\tau$ is \emph{polynomially $\Gamma$-reflecting} if there  exists a polynomial-time algorithm which, given as input a sentence $\gamma\in\Gamma$, outputs a proof in $T_2$ of the sentence:
\begin{equation}\label{eq:reflection}
  \forall\cond{s}{\tau}( \forces{s}{\tau}{\gamma} )\,\Rightarrow\,\gamma .  
\end{equation}
\end{definition}

\begin{theorem}[{Essentially \cite[Section 10]{Avigad_paper}}]\label{thm:Poly-FI-give-poly-simulation}
Let $T_1$ and $T_2$ be theories and let $\Gamma$ be a set of sentences in their common language $\mathcal{L}(T_1)\cap\mathcal{L}(T_2)$. If $\tau$ is a polynomial forcing interpretation of $T_1$ in $T_2$ that is polynomially $\Gamma$-reflecting, then $T_1$ is polynomially simulated by $T_2$ with respect to $\Gamma$.
\end{theorem}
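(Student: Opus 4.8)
The plan is to build the required polynomial-time proof transformation by composing the two principal facts about polynomial forcing interpretations established above — \cref{FI-translation of whole proofs} and the reflection property of \cref{def:reflection} — and finishing with a single application of modus ponens.

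Unfolding the definition of polynomial simulation with respect to $\Gamma$, I must describe a polynomial-time algorithm which, on input a proof $\delta$ in $T_1$ of a sentence $\gamma\in\Gamma$, outputs a proof of $\gamma$ in $T_2$. First I would run $\delta$ through the algorithm of \cref{FI-translation of whole proofs}. Since $\gamma$ is a sentence, its tuple of free variables is empty, so in \eqref{eq:forcing theorems} the name-validity hypothesis $\validname{s}{\tau}{\vv}$ degenerates to an empty conjunction and the quantifier over the name tuple is vacuous; hence, modulo a fixed amount of trivial propositional rewriting, the algorithm produces a proof in $T_2$ of
\[
\forall\cond{s}{\tau}\,\bigl(\forces{s}{\tau}{\gamma}\bigr),
\]
in time polynomial in $|\delta|$. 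Next, since $\delta$ is a Hilbert-style proof whose conclusion is $\gamma$, the formula $\gamma$ occurs among its lines, so $|\gamma|\le|\delta|$; therefore running on input $\gamma$ the polynomial-time algorithm witnessing that $\tau$ is polynomially $\Gamma$-reflecting takes time polynomial in $|\delta|$ and yields a proof in $T_2$ of \eqref{eq:reflection}, i.e. of $\forall\cond{s}{\tau}(\forces{s}{\tau}{\gamma})\Rightarrow\gamma$. Concatenating the two proofs and appending one modus ponens step gives a proof of $\gamma$ in $T_2$; the whole construction plainly runs in time polynomial in $|\delta|$, which is exactly what is required to witness that $T_1$ is polynomially simulated by $T_2$ with respect to $\Gamma$.

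There is no genuinely hard step here: all the substance has already been packaged into \cref{FI-translation of whole proofs} — which itself rests on the fact (\cite[Proposition 1.14]{pfsize}) that logical axioms and modus ponens are provably forced by every condition — and into the reflection hypothesis. The only points that deserve attention are the two pieces of bookkeeping noted above: that for a sentence the name-validity hypothesis in \eqref{eq:forcing theorems} collapses, so that one genuinely obtains a $T_2$-proof of $\forall\cond{s}{\tau}(\forces{s}{\tau}{\gamma})$ with no leftover free variables to be matched against the antecedent of the reflection sentence; and that $|\gamma|\le|\delta|$, which is what keeps the reflection call polynomial in the size of the overall input. Once these are in place the theorem is immediate.
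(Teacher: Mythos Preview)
Your proposal is correct and is exactly the intended argument: the paper does not spell out a proof of this theorem but simply attributes it to Avigad, and the obvious derivation from \cref{FI-translation of whole proofs} together with the polynomial $\Gamma$-reflection hypothesis via a single modus ponens is precisely what is meant. Your two bookkeeping observations (that the name-validity conjunct is vacuous for a sentence, and that $|\gamma|\le|\delta|$) are the only points requiring care, and you handle them correctly.
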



\section{A two-step forcing construction}\label{section:two-step FI}

Our general strategy for strengthening the $\forall\pizth$-conservativity of $\wkls+\cac$ over $\rcas$ to polynomial simulation is similar to the one from \cite{pfsize}, where the non-speedup of $\wkl+\rt$ over $\rca$ was proved by formalizing a model-theoretic argument from \cite{ky:ordinal-valued-ramsey}. That is, we would like to formalize a generic cut construction that can be used to prove the conservation theorem for $\wkls+\cac$.

The general scheme of such a proof would be as follows. 
Given a $\exists\sigzth$ sentence $\varphi\defeq \exists X\,\exists x\,\forall y\,\exists z\,\theta(X, x, y, z)$ consistent with $\rcas$, one takes a countable nonstandard model $(M, \X)\vDash\rcas$ together with $B\in\X$ and $b\in M$ such that $\forall y\,\exists z\,\theta(B, b, y, z)$ holds. Then one defines the following total function:
\begin{equation}\label{eq:function theta_in a model}
    f_\theta(y)=\min\{ z>2^y\colon\, \fale{y'}{y}\, \ele{z'}{z}\, \theta(B, b, y', z') \} 
\end{equation}
and in $\omega$ many steps builds a proper cut $I$ such that the set $\{x\in I\colon \exists y\,(x=f_\theta^{(y)}(b))\}$ is cofinal in $I$, so that the structure $(I, \codmi)$ will satisfy $\varphi$. Moreover, at the $i$-th step of the construction one uses Dilworth's theorem to guarantee that there is a chain or an antichain in $\codmi$ for the $i$-th coded partial order on $I$ (in some fixed enumeration in the metatheory). Note that the structure $(I, \codmi)$ automatically satisfies $\wkls$ (cf. \cref{chapter:preliminaries}).

There is, however, a major problem which does not appear in \cite{ky:ordinal-valued-ramsey}, where the base theory is $\rca$. Namely, in some models of $\rcas$ it may not be possible to perform this construction, as in the absence of $\iszo$ the set $\{f_\theta^{(n)}(b)\colon\, n\in\omega\}$ or even $\{2_n(b)\colon\, n\in\omega\}$ may be unbounded in $M$ -- in the letter case no proper cut in $M$ containing the witness $b$ for $\varphi$ satisfies $\expax$. 
One can easily get around this obstacle simply using compactness, but that approach cannot in general be represented by a forcing interpretation. Thus, we prefer to extend a given model $(M, \X)$ non-cofinally by means of a $\delzo$ ultrapower (a construction first systematically studied by Hirschfeld in \cite{hirschfeld_semiring, hirschfeld_recursive_ultrapowers}), which can naturally be thought of as a forcing construction. Here one takes a nonprincipal ultrafilter in the Boolean algebra of unbounded sets from $\X$ and defines in the usual way the equivalence relation on all total functions on $M$ that are in $\X$.  
Now, for each element $x$ of the ultrapower, the set $\{f_\theta^{(n)}(x)\colon\, n\in\omega\}$ is bounded by $f_\theta^{(\iota)}(x)$, where $\iota$ is the equivalence class of the identity function, so one can build a generic cut satisfying $\cac$ as described in the previous paragraph.

On the syntactical level, we will implement the above idea as two forcing interpretations $\tau_1$ and $\tau_2$, where $\tau_1$ will interpret an auxiliary theory $\sctheory$ capturing those properties of the $\delzo$ ultrapower that are necessary for the generic cut construction. 
The axiom $\sca$ is the following sentence: 
\begin{description}
    \item[$\sca$]  $\mathbb{I}$ is a proper cut closed under multiplication such that, for every number $x$, there is $c>\mathbb{I}$ for which the value $2_c(x)$ exists, 
\end{description}
where $\mathbb{I}$ is a fresh unary predicate symbol (not to be confused with the italic `$I$'). The abbreviation $\sca$ stands for `short cut' because,
intuitively, the cut $\mathbb{I}$ is defined to play a similar role to that of $\omega$ in the above model theoretic argument.

For the first forcing interpretation $\tau_1$ of $\idz+\expax+\sca$, it is enough to define it in $\rcas+\neg\iszo$ rather then in $\rcas$. This is because, by \cref{rt-polynomial simulation}, $\rcas+\iszo$ polynomially simulates $\wkls+\cac$, and polynomial simulations are closed under case distinction.

A natural choice for an interpretation of $\mathbb{I}$ is the cut $\cutzo$, as defined in \cref{chapter:preliminaries}. The reason is that, for every number $x$, the cut $J_x\defeq\{i\in\nn\colon \exists y\, (y=2_i(x))\}$ is $\sigzo$-definable, so it contains $\cutzo$. Now there are two cases to consider: the cut $\cutzo$ may or may not itself be $\sigzo$-definable. If it is not, then we have a straightforward (with no forcing) interpretation of $\idz+\expax+\sca$ in $\rcas+\neg\iszo$: all atomic formulas of $\lone$ translate to themselves and `$t(\xx)\in\mathbb{I}$' is translated to `$t(\xx)\in\cutzo$'. Indeed, this interpretation satisfies the axiom $\sca$ because, by \cref{proposition:cutzo closed under multiplication}, the cut $\cutzo$ is closed under multiplication and, for all numbers $x$, we have $\cutzo\neq J_x$.

In the other case the situation is not that simple: if the cut $\cutzo$ is itself $\sigzo$-definable, then it might happen that it is $J_x$ for some number $x$. Then the axiom $\sca$ would obviously fail under the above interpretation. 
But now we can define a forcing interpretation which takes advantage of this issue. It will simulate the construction of the $\delzo$ ultrapower $M'$ of $M$ in which $\mathbb{I}$ is interpreted as the following cut: 
\begin{equation}\label{eq:nowy sigzo-cut}
\sup\nolimits_{M'}\bigl((\cutzo)^M\bigr)=\{x\in M'\colon \exists y\in M\,(x<y\,\wedge\,M\vDash y\in\cutzo
)\}.    
\end{equation}
One can show that there is a single element $d$ in $M'$, which can be thought of as a diagonal of $(\cutzo)^M$, such that for all numbers $x\in M'$, the value $2_d(x)$ exists. 
\medskip

The outline of the proof of \cref{main theorem_cac} is as follows. We make two case distinctions. First, we consider $\rcas+\iszo$ and $\rcas+\neg\iszo$ separately. In the first case we use \cref{rt-polynomial simulation} from \cite{pfsize}. In the second case we make another case distinction and work with the theories $\rcas+\neg\iszo+\lpc$ and $\rcas+\neg\iszo+\neg\lpc$, where $\lpc$ is the following sentence (`$\mathsf{LPC}$' stands for `least proper cut'):

\begin{description}
    \item[$\lpc$]  {\,\,\,\,\,\,\,\,\,\,\,\,\,\,\,\,\,} The cut $\cutzo$ is $\sigzo$-definable. 
\end{description}

The case of $\rcas+\neg\iszo+\neg\lpc$ is simpler, as this theory admits an almost trivial (non-forcing) interpretation of the auxiliary theory $\sctheory$, which is the identity on $\lone$ and interprets $\mathbb{I}$ as $\cutzo$. Thus we can define a polynomial forcing interpretation of $\wkls+\cac$ directly in $\rcas+\neg\iszo+\neg\lpc$.

In the case of $\rcas+\neg\iszo+\lpc$ we prove polynomial simulation of $\wkls+\cac$ by composing two forcing interpretations. Firstly, we construct a forcing interpretation $\tau_1$ of $\idz+\expax+\sca$ in $\rcas+\neg\iszo+\lpc$, which simulates the model-theoretic construction of a $\delzo$ ultrapower. Then, we define a forcing interpretation $\tau_2$ of $\wkls+\cac$ in $\idz+\expax+\sca$, which follows the model-theoretic proof of $\forall\pizth$-conservativity of $\wkls+\cac$ over $\rcas$. 

Let us note that we cannot, at least if we work with the specific interpretations $\tau_1$ and $\tau_2$, avoid the second case distinction -- it will be clear from our construction that the assumption that $\cutzo$ is $\sigzo$-definable is crucial for $\tau_1$ to work. 


\subsection{Restricted ultrapower}

The forcing conditions of our first forcing interpretation are simply unbounded sets, i.e. second-order elements, ordered by inclusion. 
The set of names consists of those sets which are (graphs of) total functions. Every name is forced to be valid by any condition. We keep the usual forcing notation and use lower-case letters to denote metavariables for conditions and names, but let us stress that they are always second-order objects. 
For a tuple of names $\vv$, we abbreviate the tuple of their values on some number $x$ by $\vv(x)$.
An atomic formula $\alpha(\vv)$ of $\lone$ is forced by a condition $s$ if $\alpha(\vv(x))$ holds for all but finitely many $x\in s$. A condition $s$ forces the value of a term $t(\vv)$ to be in the cut $\mathbb{I}$ if there is an element $i\in\cutzo$ such that the value $t(\vv(x))$ is smaller than $i$ for all but finitely many $x\in s$. 
To simplify notation, we use abbreviations $\forall^\infty x\,\varphi$ and $\exists^\infty x\,\varphi$ for $\exists y \,\forall x\,(y\!<\!x\Rightarrow \varphi)$ and $\forall y\,\exists x\,(y\!<\!x\,\wedge\,\varphi)$, respectively.

\begin{definition}\label{definition_tau 1}
The following list of clauses defines a forcing translation $\tau_1$ from the language $\lone\cup\{\mathbb{I}\}$ to the language $\ltwo$.
\begin{enumerate}[(i)]
    \item $\cond{s}{\tau_1}$ is 
    \begin{equation*}
     \exists^\infty x \, (x\in s);   
    \end{equation*}
   
    \item $\lcond{s'}{s}{\tau_1}$ is
    \begin{equation*}
     \cond{s}{\tau_1}\,\wedge\,\cond{s'}{\tau_1}\,\wedge\,\forall x\,(x\in s'\Rightarrow x\in s); 
    \end{equation*}
    
    \item $\nam{v}{\tau_1}$ is
    \begin{equation*}
        \text{`}v \text{ is a total function'};
    \end{equation*}
    
    \item $\validname{s}{\tau_1}{v}$ is 
    \begin{equation*}
    \cond{s}{\tau_1}\,\wedge\,\nam{v}{\tau_1};    
    \end{equation*}

    \item if $\alpha(\xx)$ is an atomic formula of the form $t_1(\overline{x})=t_2(\overline{x})$ or $t_1(\overline{x})\leq t_2(\overline{x})$, then $\forces{s}{\tau_1}{\alpha({\overline{v}})}$ is
    \begin{equation*}
    \validname{s}{\tau_1}{\vv} \,\wedge\,\, \forall^{\infty}x\,\big(x\!\in \!s\Rightarrow \alpha(\overline{v}(x)\bigr);  
    \end{equation*}    
    
    \item if $\alpha(\xx)$ is an atomic formula of the form $t(\overline{x})\in\mathbb{I}$, then $\forces{s}{\tau_1}{\alpha({\overline{v}})}$ is
    \begin{equation*}
    \validname{s}{\tau_1}{\vv} \,\wedge\,\, \exists i\in\mathrm{I}^0_1\,\,\forall^{\infty}x\,\big(x\in s\Rightarrow t(\vv(x))\leq i\bigr).  
    \end{equation*}
\end{enumerate}
 
\end{definition}

Let us make a few simple observations about $\tau_1$ that will be used later often without mention. 
Firstly, there is a largest forcing condition, namely $\mathbf{1}=\nn$.
Secondly, if $\lcond{s'}{s}{\tau_1}$, then $s'$ is an unbounded subset of $s$. 
Thirdly, it follows immediately from the definition of $\tau_1$ that for every condition $s$, every term $t(\xx)$ and every tuple of names $\vv$ there exists a name $w$ such that $\forces{s}{\tau_1}{w = t(\vv)}$, namely $w(x)=t(\vv(x))$. 
Lastly, the definition  (\ref{item:ftr/forall}) of forcing a universal formula simplifies because every condition forces every name to be valid.


\begin{lemma}\label{tau-1 is a FI}
The forcing translation $\tau_1$ is a polynomial forcing interpretation of the language $\lone\cup\{\mathbb{I}\}$ in $\rcas+\neg\iszo+\lpc$. 
\end{lemma}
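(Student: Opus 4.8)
The plan is to verify conditions (FI1)--(FI14) from \cref{definition:FI} for $\tau_1$ one by one, and to observe that in each case the required proof is short and produced by a fixed (in fact, for the scheme-based conditions, very simple uniform) procedure, so that $\tau_1$ is polynomial. Conditions (FI1)--(FI3) are immediate: $\mathbf{1}=\nn$ is an unbounded set, so $\exists s\,(\cond{s}{\tau_1})$ holds; reflexivity and transitivity of $\trianglelefteqslant_{\tau_1}$ are just reflexivity and transitivity of $\subseteq$, together with the observation recorded after \cref{definition_tau 1} that $\lcond{s'}{s}{\tau_1}$ forces $s'$ to be unbounded. Condition (FI4) follows because every condition forces every name to be valid, and there is at least one name (e.g.\ the identity function is total, using $\expax$ or even just $\pam$), so we may take $s'=s$ and $v$ the identity. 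Condition (FI5) is vacuous for the same reason (validity of names does not depend on the condition), and (FI6) for atomic $\alpha$ is the trivial observation that if $\alpha(\vv(x))$ holds for all but finitely many $x\in s$ and $s'\subseteq s$ is unbounded, then $\alpha(\vv(x))$ holds for all but finitely many $x\in s'$; the case of atomic formulas $t(\vv)\in\mathbb{I}$ is identical, the witness $i\in\cutzo$ being unchanged.

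For the equality and function conditions (FI7)--(FI11) the key point is that all these are about the atomic formula $v=v'$, whose forcing clause says $v(x)=v'(x)$ for almost all $x\in s$; reflexivity, symmetry and transitivity of forced equality thus reduce to the corresponding properties of $=$ on $M$ holding pointwise, combined with the fact that a finite intersection of cofinite-in-$s$ conditions is again cofinite in $s$ (which uses $\bszo$, available in $\rcas$). Condition (FI11), the definability of function values, is where the third observation after \cref{definition_tau 1} does the work: for any term $t$, names $\vv$, and condition $s$, the name $w$ with $w(x)=t(\vv(x))$ is total and satisfies $\forces{s}{\tau_1}{w=t(\vv)}$, while uniqueness is again pointwise. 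The substitution condition (FI12) is then routine by unwinding the atomic forcing clauses, and is the kind of scheme for which one describes a trivial polynomial-time procedure indexed by the term $t$ and atomic formula $\alpha$. The density conditions (FI13)--(FI14): (FI13) for names is again vacuous by universal validity of names; (FI14) for atomic $\alpha$ is the statement that if below every $s'\trianglelefteqslant s$ there is $s''\trianglelefteqslant s'$ with $\alpha$ holding cofinitely on $s''$, then $\alpha$ holds cofinitely on $s$ --- this is proved by contraposition: if $\alpha$ fails infinitely often on $s$, then $s'\defeq\{x\in s\colon \neg\alpha(\vv(x))\}$ is an unbounded subset of $s$ (so a condition $\trianglelefteqslant s$) no unbounded subset of which forces $\alpha$, and the same argument handles $t(\vv)\in\mathbb{I}$ using that $\cutzo$ is a cut.

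The substantive part is (FI15), forcing the axioms of $\idz+\expax+\sca$. Since the atomic forcing clauses simply say that the first-order statement holds pointwise on a cofinite subset of $s$, and $\trianglelefteqslant_{\tau_1}$-genericity introduces no new first-order behaviour, the forcing translation of a first-order sentence is, up to the standard \emph{\L o\'s-style} bookkeeping, a \emph{\L o\'s} theorem: each quantifier-free matter is decided pointwise, and the forcing clauses for $\neg$, $\Rightarrow$, $\forall$ (here simplified by universal validity of names, as noted after \cref{definition_tau 1}) exactly encode the ultrapower semantics. Concretely, I would prove by induction on the complexity of $\lone$-formulas $\theta$ that $\forces{s}{\tau_1}{\theta(\vv)}$ is equivalent (provably in $\rcas$, uniformly and in polynomial time) to ``$\theta(\vv(x))$ holds for almost all $x\in s$'', \emph{provided} one restricts to the dense set of conditions on which the relevant quantifier-free instances are decided; this is where $\bszo$ over $\rcas$ is used to collect finitely many exceptional sets. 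Granting this, every axiom $\sigma$ of $\pam+\idz+\expax$ is forced by every condition because $\sigma$ holds in $M$, hence trivially pointwise. Here one must be slightly careful with the $\idz$ instances, which in our finite axiomatization are the $\pio$ sentences from \cref{chapter:preliminaries}: forcing a $\pio$ sentence reduces via the $\forall$- and $\neg$-clauses to forcing its $\delz$ matrix cofinitely, which again holds pointwise since $M\vDash\idz$. I expect the \L o\'s-type induction, and in particular getting the polynomial bound on the size of its instances uniformly in $\theta$, to be the main technical obstacle, although it is structurally parallel to arguments in \cite{pfsize} and \cite{Avigad_paper}.

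Finally, the axiom $\sca$ --- ``$\mathbb{I}$ is a proper cut closed under multiplication, and for every $x$ there is $c>\mathbb{I}$ with $2_c(x)$ defined'' --- is forced by every condition $s$ by the following: (i) $\mathbb{I}$ being a cut closed under $+$, $\cdot$ translates, via the atomic clause for $t(\vv)\in\mathbb{I}$ and \cref{proposition:cutzo closed under multiplication}, into closure properties of $\cutzo$, which $\rcas$ proves; (ii) properness of $\mathbb{I}$ and the existence of $c>\mathbb{I}$ with $2_c(x)$ defined are where the hypothesis $\lpc$ is essential: since $\cutzo$ is assumed $\sigzo$-definable, by \cref{cofinal set} there is an unbounded set $A=\{a_i\}_{i\in\cutzo}$, and taking the name $v$ with $v(x)=a_{\langle\text{index of }x\rangle}$ (suitably, a name enumerating $A$ at the identity) produces, on any condition, a forced element $d$ of the interpreted universe lying \emph{strictly above} $\cutzo$ --- indeed above every $i\in\cutzo$ the set $\{x\colon v(x)>i\}$ is cofinite --- yet with $2_d(x)$ forced to exist because $2^x=y$ is $\delz$ and for each fixed $x$ the value $2_{a_i}(x)$ exists in $M$ for cofinitely many $i$, as $x\in\cutzo$ would contradict... (more simply: for each $x\in M$, the $\sigzo$-cut $J_x=\{i\colon \exists y\,(y=2_i(x))\}$ contains $\cutzo$, so $a_i\in J_x$ for cofinitely many $i$, hence $2_d(x)$ is forced to be defined). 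Collecting these, $\forces{s}{\tau_1}{\sca}$ for every $s$, completing (FI15). Tracking through the above, every proof produced is of size bounded by a fixed polynomial in the size of the relevant scheme-index (term, atomic formula, or axiom), so $\tau_1$ is a polynomial forcing interpretation, as claimed.
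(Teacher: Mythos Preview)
There is a genuine gap in your treatment of the density condition (FI13) for atomic formulas of the form $t(\vv)\in\mathbb{I}$. Your claim that ``the same argument handles $t(\vv)\in\mathbb{I}$ using that $\cutzo$ is a cut'' does not go through. For an ordinary atom $\alpha$, the contrapositive works because the negation of $\forces{s}{\tau_1}{\alpha(\vv)}$ is simply $\exists^\infty x\!\in\! s\,\neg\alpha(\vv(x))$, so $s'\defeq\{x\in s:\neg\alpha(\vv(x))\}$ is an unbounded $\delzo$-definable subset. But the negation of $\forces{s}{\tau_1}{t(\vv)\in\mathbb{I}}$ is $\forall i\!\in\!\cutzo\,\exists^\infty x\!\in\! s\,(t(\vv(x))>i)$: there is no single pointwise condition to carve out. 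Concretely, it can happen that $t(\vv(x))\in\cutzo$ for \emph{every} $x\in s$ while the values are cofinal in $\cutzo$; then your candidate $s'$ is empty, and the set $\{x\in s: t(\vv(x))\notin\cutzo\}$ is neither $\delzo$-definable nor unbounded.

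The paper's proof of this case is the only substantive part of the lemma, and it uses $\lpc$ essentially here---not only in forcing $\sca$, as you suggest. Under $\lpc$ one fixes, via \cref{cofinal set}, an unbounded set $A=\{a_i\}_{i\in\cutzo}$ and then \emph{recursively diagonalizes}: $x_{i+1}$ is the least $y\in s$ with $y>x_i$, $y>a_i$, and $t(\vv(y))>i$. The resulting $s'=\{x_i\}$ is unbounded (the hypothesis on $s$ supplies each $x_{i+1}$, and $\lpc$ guarantees the recursion runs through all of $\cutzo$, since otherwise one would obtain a $\sigzo$-cut strictly inside $\cutzo$); on $s'$ the values $t(\vv(x))$ eventually exceed every $i\in\cutzo$, so no $\lcond{s''}{s'}{\tau_1}$ forces $t(\vv)\in\mathbb{I}$. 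This is precisely why the case split on $\lpc$ is made, as the paper remarks at the end of the introduction to \cref{section:two-step FI}.

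A secondary point: the lemma as stated is about the \emph{language} $\lone\cup\{\mathbb{I}\}$, i.e.\ conditions (FI1)--(FI13) only. Your discussion of forcing $\pam+\idz+\expax$ and $\sca$ (and the \L o\'s-style induction you sketch) belongs to the separate \cref{tau1-is-polynomial-FI-of-SC}, where it is indeed carried out via \cref{Delta_zero_Los}.
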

\begin{proof}
Let us first note that the conditions from \cref{definition:FI} given by single sentences, i.e. (\ref{item:fi/non-empty-preorder})-(\ref{item:fi/monotonicity_names}), (\ref{item:fi/equality:reflexive})-(\ref{item:fi/equality:transitive}) and (\ref{item:fi/density_names}), follow immediately from the definition of $\tau_1$.
Since there are only three function symbols in the interpreted language, the condition (\ref{item:fi/equality:functions}) is also given by a single sentence and follows easily from the definition of $\tau_1$ as well. 

The schematic conditions (\ref{item:fi/monotonicity_atoms}) and (\ref{item:fi/substitution}) are also unproblematic and their proofs can be constructed by a polynomial-time algorithm.  
Namely, given an atomic formula $\alpha$, the algorithm first finds its forcing translation according to clauses (v) and (vi) of \cref{definition_tau 1}. 
The condition  (\ref{item:fi/monotonicity_atoms}) follows immediately from the definition of $\tau_1$, so the algorithm constructs its proof using a single fixed template in which it substitutes expressions like $\forces{s}{\tau}{\alpha}$ into finitely many blanks. This clearly takes time polynomial in $|\alpha|$.

For (\ref{item:fi/substitution}), the proof goes by induction on the complexity of terms occurring in $\alpha$ and uses the equality axioms in the interpreting theory. For each subterm $r$ of a term occurring in $\alpha$, the proof of the formula $\forall^\infty x\!\in \!s\,\bigl( w(x)=t(\vv(x))\bigr)\Rightarrow \forall^\infty x\!\in \!s\,\bigl(r(\uu(x), w(x))=r(\uu(x), t(\vv(x))) \bigr)$ is constructed just once, so the whole construction of the proof of (\ref{item:fi/substitution}) is polynomial in $|\alpha|$.

The only nonobvious condition is the density property for atomic formulas (\ref{item:fi/density_atoms}). 
There are three cases to consider, depending on whether an atom $\alpha$ is of the form $t_1(\xx)=t_2(\xx)$, $t_1(\xx)\leq t_2(\xx)$ or  $t(\xx)\in\mathbb{I}$. In each case the proof is constructed by contraposition and it should be clear that it does not depend substantially on the terms occurring in an atomic formula $\alpha$ and that it can be constructed in time polynomial in $|\alpha|$. 
We skip the first two cases, as they are much simpler and require only using the definition of $\tau_1$.

So, let $s$ be a condition and $\vv$ be some names. and suppose that $\nforces{s}{\tau_1}{t(\vv)\in\mathbb{I}}$. This means that for every $i\in\mathrm{I}^0_1$ there are unboundedly many $x\in s$ such that $t(\vv(x))>i$.
Recall that we work under the assumption that the cut $\mathrm{I}^0_1$ is $\Sigma^0_1$-definable, so by \cref{cofinal set} there exists an unbounded set $A=\{a_i\}_{i\in \mathrm{I}^0_1}$ indexed by $\mathrm{I}^0_1$. We define recursively a set $s'=\{x_0, x_1, \dots\}$ as follows:
\begin{align*}
x_0 &= \min(s),\\
x_{i+1} &= \min \{y\in s\colon \, y\!>\!x_i \,\wedge\, y\!>\!a_{i} \,\wedge\, t(\overline{v}(y))
>i\}.
\end{align*}
Note that the set $s'$ is unbounded and has $\cutzo$-many elements. Indeed, by the assumption that $\nforces{s}{\tau_1}{t(\vv)\in\mathbb{I}}$, for every $i\in\cutzo$ there exist arbitrarily large numbers $x\in s$ with $t(\vv(x))\!>\!i$. Also, for every $i\in\cutzo$, the $i$-th step of the recursive construction of $s'$ can be performed, since otherwise the set $\{i\in\nn\colon$ $ \exists x\,(x=x_i)\}$ would be a $\sigzo$-cut properly contained in $\cutzo$, contradicting $\lpc$.

Therefore, the set $s'$ is a condition below $s$ and, by its definition, for every $i\in\cutzo$ it holds that $\forall x\in s'(x\!>\!x_i\Rightarrow t(\vv(x))\!>\!i)$. Thus $s'$ does not force ${t(\vv)\in\mathbb{I}}$ and neither does any condition below it. Hence, we obtain $\exists\lcond{s'}{s}{\tau_1}\,\forall\lcond{s''}{s'}{\tau_1}$ $(\nforces{s''}{\tau_1}{\alpha(\vv)})$, as required.
\end{proof}

The following lemma extends condition (v) of \cref{definition_tau 1} to all $\delz$ formulas. It can be seen as a forcing analogue of a restricted version of the Łoś ultraproduct theorem (cf. {\cite[Theorem 2.3]{hirschfeld_recursive_ultrapowers}}). Intuitively, it says that $\delz$ formulas are absolute between the ground model and the generic ultrapower.

\begin{lemma}\label{Delta_zero_Los}
There exists a polynomial-time algorithm which, given as input a $\Delta_0$ formula $\varphi(\zz)$ of $\lone$, outputs a proof in $\rcas+\neg\iszo+\lpc$ of the sentence: 
\begin{equation}\label{eq:los}
\forall\cond{s}{\tau_1}\forall\nam{\vv}{\tau_1}\bigl(\forces{s}{\tau_1}{\varphi(\overline{v})} \,\Leftrightarrow\, \forall^\infty x\!\in\!s \,\,\varphi(\overline{v}(x))\bigr).    
\end{equation}
\end{lemma}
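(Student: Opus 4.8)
The plan is to prove (\ref{eq:los}) by induction on the complexity of the $\delz$ formula $\varphi$, working with our official connectives $\neg$, $\Rightarrow$ and with bounded quantifiers. Since $\varphi$ is $\delz$, every quantifier in it is bounded by a term, so after the atomic base case the induction has only three sorts of steps: $\neg$, $\Rightarrow$, and a bounded universal quantifier $\fale{w}{t(\zz)}\psi$. The base case, where $\varphi$ is an atom $t_1(\zz)=t_2(\zz)$ or $t_1(\zz)\leq t_2(\zz)$, is literally clause (v) of \cref{definition_tau 1}, so there is nothing to prove there beyond recording that $\nam{v_i}{\tau_1}$ holds for each name and that validity is automatic.

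First I would handle the $\neg$ step. By clause (\ref{item:ftr/neg}) of \cref{definition:FT}, $\forces{s}{\tau_1}{\neg\psi(\vv)}$ unfolds to $\validname{s}{\tau_1}{\vv}\wedge\forall\lcond{s'}{s}{\tau_1}(\nforces{s'}{\tau_1}{\psi(\vv)})$. Using the induction hypothesis for $\psi$ and the fact (from \cref{no-contradiction-forced} and \cref{general monotonicity and dencity}) that $\psi$ behaves monotonically, I would argue: if $\forall^\infty x\!\in\!s\,\neg\psi(\vv(x))$, then every unbounded $s'\subseteq s$ also satisfies $\forall^\infty x\!\in\!s'\,\neg\psi(\vv(x))$, so by IH no $s'\condless{\tau_1}s$ forces $\psi(\vv)$, giving $\forces{s}{\tau_1}{\neg\psi(\vv)}$. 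Conversely, if $\neg\psi(\vv(x))$ fails for unboundedly many $x\in s$, i.e. $\exists^\infty x\!\in\!s\,\psi(\vv(x))$, then the set $s'=\{x\in s\colon\psi(\vv(x))\}$ is a condition below $s$ with $\forall x\!\in\!s'\,\psi(\vv(x))$, hence $\forces{s'}{\tau_1}{\psi(\vv)}$ by IH, so $\nforces{s}{\tau_1}{\neg\psi(\vv)}$. The $\Rightarrow$ step is similar but uses clause (\ref{item:ftr/then}): expanding $\forces{s}{\tau_1}{\psi_1(\vv)\Rightarrow\psi_2(\vv)}$ and applying the IH to $\psi_1,\psi_2$, one shows it is equivalent to $\forall^\infty x\!\in\!s\,(\psi_1(\vv(x))\Rightarrow\psi_2(\vv(x)))$; the forward direction is immediate by passing to the subcondition where $\psi_1$ holds, and the backward direction uses that if $\forall^\infty x\!\in\!s\,(\psi_1\Rightarrow\psi_2)$ then for any $s'\condless{\tau_1}s$ forcing $\psi_1$ we have $\forall^\infty x\!\in\!s'\,\psi_1$ hence $\forall^\infty x\!\in\!s'\,\psi_2$, so $s'$ itself forces $\psi_2$.

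The step I expect to be the main obstacle is the bounded universal quantifier $\varphi(\zz)=\fale{w}{t(\zz)}\psi(w,\zz)$, since here one must make sure the bounded quantifier in the ground model is genuinely absolute rather than just a one-sided approximation. Unfolding clause (\ref{item:ftr/forall}) — recalling that for $\tau_1$ every condition forces every name to be valid, so the clause simplifies — $\forces{s}{\tau_1}{\fale{w}{t(\zz)}\psi(w,\vv)}$ becomes, up to logical rearrangement using \cref{forcing existential}, the statement $\forall\lcond{s'}{s}{\tau_1}\,\exists\lcond{s''}{s'}{\tau_1}\,\forces{s''}{\tau_1}{(w\le t(\vv)\Rightarrow\psi(w,\vv))}$ quantified over names $w$. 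The key trick is to use the name $w$ given by a \emph{Skolem function}: in the ground model define $g(x)$ to be the least $w\le t(\vv(x))$ with $\neg\psi(w,\vv(x))$ if one exists, and $0$ otherwise; this $g$ is a total function by $\idzz$, hence a name. If $\neg\forall^\infty x\!\in\!s\,\fale{w}{t(\vv(x))}\psi(w,\vv(x))$, then $s'=\{x\in s\colon\exists w\le t(\vv(x))\,\neg\psi(w,\vv(x))\}$ is a condition below $s$ on which $g$ witnesses failure: $\forall x\!\in\!s'\,(g(x)\le t(\vv(x))\wedge\neg\psi(g(x),\vv(x)))$, so by the IH (applied to the atom $w\le t(\zz)$ and to $\psi$) no condition below $s'$ forces $g\le t(\vv)\Rightarrow\psi(g,\vv)$, and hence $\nforces{s}{\tau_1}{\fale{w}{t(\vv)}\psi(w,\vv)}$. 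Conversely, if $\forall^\infty x\!\in\!s\,\fale{w}{t(\vv(x))}\psi(w,\vv(x))$, then for any name $w$ and any $s'\condless{\tau_1}s$, the subcondition $s''=\{x\in s'\colon w(x)>t(\vv(x))\}$ forces $w\le t(\vv)$ to be false — whence $\forces{s''}{\tau_1}{w\le t(\vv)\Rightarrow\psi(w,\vv)}$ vacuously — while if $s''$ is bounded, then $s'$ itself satisfies $\forall^\infty x\!\in\!s'\,(w(x)\le t(\vv(x))\wedge\psi(w(x),\vv(x)))$, so $s'$ forces the implication directly; either way the required $\exists\lcond{s''}{s'}{\tau_1}$ is produced, giving $\forces{s}{\tau_1}{\fale{w}{t(\vv)}\psi(w,\vv)}$. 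Throughout, one checks — as in the proof of \cref{tau-1 is a FI} — that the proof template depends only on the outermost connective and the (bounded) term, so stitching the pieces together by recursion yields a proof of (\ref{eq:los}) of size polynomial in $|\varphi|$, completing the construction of the algorithm.
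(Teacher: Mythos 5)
Your proposal is correct and follows essentially the same approach as the paper's proof: induction on the $\Delta_0$ formula's structure, with a Skolem-function name $g$ (the paper calls it $w$) handling the contrapositive of the $(\Rightarrow)$ direction of the bounded-universal case. The only cosmetic difference is in the $(\Leftarrow)$ direction of that case, where the paper argues by contraposition (extracting the witnessing name and condition directly from the failure to force), while you argue directly via a case split on whether $\{x\in s':w(x)>t(\vv(x))\}$ is unbounded; both variants work and lead to the same polynomial-size proof template.
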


\begin{proof}
We describe a polynomial-time algorithm which constructs a proof of
of (\ref{eq:los}) for a given $\Delta_0$ formula $\varphi$ by recursion on its subformulas. 
The algorithm starts with atomic subformulas of $\varphi$ and uses just one proof template for all atoms. 
Then, given a complex subformula $\psi$, the algorithm takes already produced proofs of (\ref{eq:los}) for the immediate subformulas of $\psi$, and merges these proofs into a proof of (\ref{eq:los}) for $\psi$. This is done by executing one of three fixed subroutines corresponding to the syntactic form of $\psi$, i.e. whether $\psi$ is a negation, an implication or a universally quantified formula. 
For example, if $\psi$ is $\theta\Rightarrow\zeta$, then the algorithm constructs a proof of (\ref{eq:los}) for $\psi$ by inserting $\theta$, $\zeta$ and $\psi$ into a constant number of blanks in the fixed  proof template for implication, and then adjoins the template filled in this way to the previously constructed proofs of (\ref{eq:los}) for $\theta$ and $\zeta$.

We describe informally the subroutine only for a universal subformula, which is the least straightforward case. It should be easily seen from the description that the subroutine takes time polynomial in the size of a given subformula. 

So, let $\psi(\zz)$ be a subformula of $\varphi$ of the form $\fale{y}{t(\zz)}\,\theta(y, \zz)$, which is shorthand for $\forall y\bigl(y\!\leq\! t(\zz)\Rightarrow\theta(y, \zz)\bigr)$, where $y$ is not among $\zz$. Suppose that we have already constructed proofs of (\ref{eq:los}) for $\theta(y, \zz)$ and the atomic subformula $y\!\leq\! t(\zz)$. Let $\vv$ be a tuple of names of the same length as $\zz$. We prove both directions of the equivalence (\ref{eq:los}) for $\psi(\zz)$ by contraposition.
For the ($\Rightarrow$) direction, let us assume that $\exists^\infty x\in s$ $\exists y
\,\bigl(y \leq t(\vv(x))\,\wedge\,\neg\theta(y, \vv(x))\big)$. We define the unbounded set $s'\defeq\{x\in s\colon \exists y
\,\bigl(y \leq t(\vv(x))\wedge\neg\theta(y, \vv(x))\big)\}$ and a function $w$ as follows: 
\begin{equation*}
w(x) =
\begin{cases}
\textrm{least } y\leq t(\vv(x))\, \textrm{ such that } \neg\theta(y, \vv(x)) & \textrm{if  } x\in s'\textrm{,}\\
0 & \textrm{otherwise.}
\end{cases}
\end{equation*}

Both $s'$ and $w$ are $\delzo$-definable, so $s'$ is a condition below $s$ and $w$ is a valid name. From the definitions of $s'$ and the function $w$ we obtain that $\forall^\infty x\!\in\!s' \big(w(x)\!\leq\! t(\vv(x))\wedge \neg\theta(w(x), \vv(x))\big)$. By the proof of (\ref{eq:los}) for the atomic subformula $y\leq t(\zz)$ constructed in the base step of the recursion, and the proof of (\ref{eq:los}) for $\theta(y, \zz)$ constructed in a previous step, we get proofs of $\forces{s'}{\tau_1}{w\leq t(\vv)}$ and $\nforces{s'}{\tau_1}{\theta(w, \vv)}$. But clearly no condition below $s'$ forces $\theta(w, \vv)$, so $\forces{s'}{\tau_1}{\neg\theta(w, \vv)}$. Thus,
by the definition of forcing implication (\ref{item:ftr/then}) and a universal formula (\ref{item:ftr/forall}), we learn that $\nforces{s}{\tau_1}{\forall y\,\bigl(y \leq t(\vv)\Rightarrow\theta(y, \vv)\bigr)}$. 

For the $(\Leftarrow)$ direction, assume that $\nforces{s}{\tau_1}{\forall y\,\bigl(y \leq t(\vv)\Rightarrow\theta(y, \vv)\bigr)}$. 
Then, by (\ref{item:ftr/forall}), there exist a name $w$ and a condition $\lcond{s'}{s}{\tau_1}$ such that no condition $s''$ below $s'$ satisfies $\forces{s''}{\tau_1}{\big(w\leq t(\vv)\Rightarrow\theta(w, \vv)\big)}$. In particular, $\nforces{s'}{\tau_1}{\big(w\leq t(\vv)\Rightarrow\theta(w, \vv)\big)}$.
By the definition of forcing implication (\ref{item:ftr/then}), we learn that there exists a condition $\lcond{s''}{s'}{\tau_1}$ such that $\forces{s''}{\tau_1}{w\leq t(\vv)}$ and $\nforces{s''}{\tau_1}{\theta(w, \vv)}$. 
By the proof of (\ref{eq:los}) for the atomic subformula $y\leq t(\zz)$ constructed in the base step of the recursion, and the proof of (\ref{eq:los}) for $\theta(y, \zz)$ constructed in a previous step,
we obtain that $\exists^\infty x\in s''\,\bigl( w(x) \leq t(\vv(x))\wedge\neg\theta(w(x), \vv(x))\bigr)$. Since $s''$ is unbounded in $s$, we can conclude that $\exists^\infty x\in s\,\, \bigl(w(x) \leq t(\vv(x))\wedge\neg\theta(w(x), \vv(x))\bigr)$, and therefore $\exists^\infty x\in s\,\,\neg\forall y
\,\bigl(y \leq t(\vv(x))\Rightarrow\theta(y, \vv(x))\big)$.

The above construction of the proof of (\ref{eq:los}) for a given $\delz$ formula $\varphi$ takes time polynomial in $|\varphi|$: if $\varphi$ has $k$ subformulas, then our algorithm goes through $k$ stages, where each stage has one of four types, and the time needed to perform a stage of a given type is polynomial in the size of a given subformula of $\varphi$.  
\end{proof}

We finish this section by showing that $\tau_1$ determines a polynomial forcing interpretation of $\idz+\expax+\sca$.

\begin{lemma}\label{tau1-is-polynomial-FI-of-SC}
The forcing translation $\tau_1$ is a polynomial forcing interpretation of $\sctheory$ in $\rcas+\neg\iszo+\lpc$.
\end{lemma}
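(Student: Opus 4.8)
The plan is to build on \cref{tau-1 is a FI}, which already establishes that $\tau_1$ is a polynomial forcing interpretation of the \emph{language} $\lone\cup\{\mathbb{I}\}$ in $\rcas+\neg\iszo+\lpc$. What remains is to verify the single schematic condition (FI14): for every axiom $\sigma$ of the finitely axiomatized theory $\sctheory$, we must construct in polynomial time a proof in $\rcas+\neg\iszo+\lpc$ of $\forall\cond{s}{\tau_1}(\forces{s}{\tau_1}{\sigma})$. Since $\sctheory$ is finite, the schematic/polynomial requirement here is trivial once we treat each axiom; so the work is really just: force every axiom. By the remark after \cref{general monotonicity and dencity}, for a universal axiom $\forall\xx\,\psi(\xx)$ it suffices to prove $\forall\cond{s}{\tau_1}\,\forall\nam{\vv}{\tau_1}(\forces{s}{\tau_1}{\psi(\vv)})$, and since every condition forces every name to be valid this is what I would aim for throughout.

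First I would dispatch the axioms of $\idz+\expax$. The $\pam$ axioms and the finitely many $\delz$-induction instances (recall these are packaged as $\pio$ sentences in the chosen finite axiomatization) are all of the form $\forall\xx\,\psi(\xx)$ with $\psi$ at worst $\pizz$-ish over $\lone$; here the key tool is \cref{Delta_zero_Los}, which tells us $\forces{s}{\tau_1}{\varphi(\vv)}\Leftrightarrow\forall^\infty x\in s\,\varphi(\vv(x))$ for $\delz$ formulas $\varphi$. For a universal $\lone$-axiom $\forall\zz\,\varphi(\zz)$ with $\varphi\in\delz$, forcing it at $s$ reduces, via the forall clause (FT8) plus \cref{Delta_zero_Los}, to the fact that $\varphi(\vv(x))$ holds for every relevant $x$ because $\varphi$ is already a theorem of $\rcas$ applied pointwise — so a single proof template per axiom suffices. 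For $\expax$, given a name $v$ one simply exhibits the name $w$ with $w(x)=2^{v(x)}$ (a $\delzo$-definable total function, using $\expax$ in the ground model), and checks $\forces{s}{\tau_1}{w=2^v}$ directly from clause (v). The induction instances need the observation that $\tau_1$-forcing a $\pio$ sentence reduces through \cref{Delta_zero_Los} to its pointwise validity, which holds because $\rcas\supseteq\idz$ proves it in the ground model.

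The substantive part is forcing $\sca$, i.e. the conjunction: ``$\mathbb{I}$ is a proper cut, closed under multiplication, and $\forall x\,\exists c\,(c>\mathbb{I}\,\wedge\,2_c(x)\text{ exists})$.'' That $\mathbb{I}$ is a cut (initial segment closed under successor) is immediate from clause (vi) and the fact that $\cutzo$ is a cut. Closure under multiplication: if $\forces{s}{\tau_1}{v\in\mathbb{I}}$ and $\forces{s}{\tau_1}{w\in\mathbb{I}}$, witnessed by $i,j\in\cutzo$ with $v(x),w(x)\le i,j$ for almost all $x\in s$, then $v(x)w(x)\le ij$ and $ij\in\cutzo$ by \cref{proposition:cutzo closed under multiplication} — so $\forces{s}{\tau_1}{v\cdot w\in\mathbb{I}}$. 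For \emph{properness} and the superexponential clause, this is where the hypothesis $\lpc$ (that $\cutzo$ is $\sigzo$-definable) and the density argument from the proof of \cref{tau-1 is a FI} are essential. Using that $\cutzo$ is $\sigzo$-definable, by \cref{cofinal set} fix an unbounded $A=\{a_i\}_{i\in\cutzo}$. I would define a single name $d$ — morally the ``diagonal of $\cutzo$'' — roughly by $d(x)=$ (the index such that $a_{d(x)-1}<x\le a_{d(x)}$), a $\delo(A)$-definable total function. Then for any condition $s$ and any name $v$, after passing to a dense subcondition $s'\condless{\tau_1} s$ thinned so that $2_{d(y)}(v(y))$ exists for all $y\in s'$ (possible by a recursive thinning exactly as in the density proof for $t(\vv)\in\mathbb{I}$, using $\neg\lpc$ is \emph{not} needed — rather $\lpc$ guarantees the construction runs through all of $\cutzo$), one checks $\nforces{s'}{\tau_1}{d\in\mathbb{I}}$ (so $d>\mathbb{I}$ is forced densely below $s$, hence by density (FI13) / \cref{general monotonicity and dencity} forced at $s$) while $\forces{s'}{\tau_1}{2_d(v)\text{ exists}}$ via the name $y\mapsto 2_{d(y)}(v(y))$. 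Properness of $\mathbb{I}$ follows since $\neg\iszo$ makes $\cutzo$ proper in the ground model and the diagonal $d$ witnesses a forced element above it.

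The main obstacle I expect is the bookkeeping around the superexponential clause of $\sca$: one must produce a \emph{single} name $d$ and a uniform thinning procedure such that, simultaneously for every name $v$, the value $2_{d(x)}(v(x))$ is eventually defined along a dense subcondition, and do so while respecting the density machinery (FI13) so that ``$d>\mathbb{I}$'' is genuinely forced rather than merely forced densely. The delicate point is that the thinning needed to make $2_{d(x)}(v(x))$ exist depends on $v$, so one cannot fix $d$ after seeing $v$; the resolution is that the existential ``$\exists c$'' in $\sca$ is itself a $\tau_1$-forced existential, so by \cref{forcing existential}(b) it suffices to find, densely below any $s$, \emph{some} subcondition $s''$ and name (namely $d$ restricted/thinned appropriately, or a $v$-dependent name bounding $2_{d(x)}(v(x))$) — and here $\lpc$ is exactly what guarantees the recursive thinning indexed by $\cutzo$ does not terminate early, just as in the final paragraph of the proof of \cref{tau-1 is a FI}. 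Everything else is a routine application of \cref{Delta_zero_Los}, \cref{general monotonicity and dencity}, and \cref{forcing existential}, with each axiom handled by its own fixed-size proof template; since $\sctheory$ is finite, polynomiality of the interpretation is automatic.
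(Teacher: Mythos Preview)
Your proposal is correct and follows essentially the same route as the paper: reduce to forcing each of the finitely many axioms, dispatch $\pam$, $\expax$, and $\idz$ via \cref{Delta_zero_Los}, and handle $\sca$ using the diagonal name $d(x)=i$ where $x\in(a_{i-1},a_i]$ built from the $\cutzo$-indexed set $A$ supplied by $\lpc$.

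The one place you diverge is the superexponential clause, which you flag as the ``main obstacle'': you plan to thin to a subcondition on which $2_{d(x)}(v(x))$ exists, worry that this thinning depends on $v$, and propose resolving this via density and \cref{forcing existential}(b). This complication is illusory. The paper's observation is that $2_{d(x)}(v(x))$ exists for \emph{every} $x$, with no thinning needed: by construction $d(x)\in\cutzo$ always, and for any fixed $z$ the set $J_z=\{j:\exists y\,(y=2_j(z))\}$ is a $\sigzo$-definable cut, hence $\cutzo\subseteq J_z$; applying this pointwise with $z=v(x)$ gives the claim. So the name $w(x)=2_{d(x)}(v(x))$ is simply total, $\forces{\mathbf{1}}{\tau_1}{w=2_d(v)}$ follows directly from \cref{Delta_zero_Los}, and the \emph{same} $d$ witnesses the $\exists c$ uniformly for every $v$. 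Likewise, properness of $\mathbb{I}$ is handled directly by showing $\forces{\mathbf{1}}{\tau_1}{\forall y\,(y\in\mathbb{I}\Rightarrow y<d)}$ --- for any $s$ and any $v$ with $\forces{s}{\tau_1}{v\in\mathbb{I}}$, the witnessing bound $i\in\cutzo$ satisfies $d(x)>i\ge v(x)$ for all $x>a_i$ in $s$ --- so no density detour is needed there either.
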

\begin{proof}
It is enough to show that $\rcas+\neg\iszo+\lpc$ proves that $\mathbf{1}$ forces each axiom of $\sctheory$. Then, the polynomiality of $\tau_1$ will follow by \cref{tau-1 is a FI} and the fact that $\sctheory$ is finitely axiomatized.

So, let us work in $\rcas+\neg\iszo+\lpc$. Using \cref{Delta_zero_Los}, it is straightforward to show that $\mathbf{1}$ forces $\pam + \expax$. We argue only for $\expax$. Take any name $v$. Clearly, the function $w$ defined by $w(x)=2^{v(x)}$ is total, so $\validname{\mathbf{1}}{\tau_1}{w}$. 
Since `$z=2^y$' is a $\delz$ formula, we can use \cref{Delta_zero_Los} to get $\forces{\mathbf{1}}{\tau_1}{w=2^v}$. 

\smallskip
As mentioned in \cref{chapter:preliminaries}, we can assume that $\Delta_0$-induction is given just by one sentence $\forall\xx\,\varphi(\xx)$, where $\varphi$ is $\delz$. We show that $\forces{\mathbf{1}}{\tau_1}{\forall\xx\,\varphi(\xx)}$. So, let $\vv$ be any names. By \cref{definition_tau 1} (iv), $\validname{\mathbf{1}}{\tau_1}{\vv}$. Since $\idz$ holds in the interpreting theory we know that for every number $x$ the formula $\varphi(\vv(x))$ also holds. Thus, by \cref{Delta_zero_Los}, we obtain $\forces{\mathbf{1}}{\tau_1}{\forall\xx\,\varphi(\xx)}$.

\smallskip
Now we show that the axiom $\sca$ is forced. 
We check first that the set $\mathbb{I}$ is forced to be an initial segment, that is, $\mathbf{1}$ forces the sentence $\forall x, y\,\bigl((x\in\mathbb{I}$ $\wedge\,y\!<\!x)\Rightarrow y\in\mathbb{I}\bigr)$.
So, take some names $v$ and $w$ and let $s$ be a condition such that $\forces{s}{\tau_1}{(v\in\mathbb{I}\wedge w<v)}$. 
This means that $\forall^\infty x\!\in\! s\,(w(x)\!<\!v(x))$ and that there is $i\in\cutzo$ such that $\forall^\infty x\!\in\! s\,(v(x)\!<\!i$). Then, clearly, it holds that $\forall^\infty x\!\in\! s\,(w(x)<i)$, so by \cref{definition_tau 1} (vi) we obtain $\forces{s}{\tau_1}{w\in\mathbb{I}}$.

Next we check that $\mathbb{I}$ is forced to be closed under multiplication, i.e. $\mathbf{1}$ forces the sentence $\forall y, z\bigl((y\in\mathbb{I}\wedge z\in\mathbb{I})\Rightarrow yz\in\mathbb{I}\bigr)$.  
So, let $v, w$ be any names and suppose that $s$ is a condition such that $\forces{s}{\tau_1}{(v\in \mathbb{I}\wedge w\in \mathbb{I})}$. Then, there are $i, j\!\in\!\cutzo$ such that $\forall^{\infty}x\!\in\! s\,\bigl(v(x) \leq i\,\wedge\,w(x)\leq j\bigr)$. By \cref{proposition:cutzo closed under multiplication}, we know that the cut $\mathrm{I}^0_1$ is closed under multiplication, so we have $ij\in\cutzo$. Then, $\forall^{\infty}x\in s\,\bigl(v(x)w(x) \leq ij\bigr)$ so, by \cref{definition_tau 1} (vi), we obtain $\forces{s}{\tau_1}{vw\in\mathbb{I}}$.

To show that $\mathbb{I}$ is forced to be a proper cut we find a name $d$ such that $\mathbf{1}$ forces that $d$ is strictly greater than any element of $\mathbb{I}$, i.e. $\forces{\mathbf{1}}{\tau_1}{\forall y\bigl(y\in\mathbb{I}\Rightarrow y<d\bigr)}$.
We use the unbounded set $A=\{a_i\}_{i\in\mathrm{I}^0_1}$ as in \cref{cofinal set} to define the following total function $d$:
\begin{equation}\label{eq:diagonal function}
    d(x) = i,
\end{equation}
where $i$ is the unique element of $\cutzo$ such that $x\in(a_{i-1}, a_{i}]$. Clearly, the function $d$ is $\delo(A)$-definable so it is a valid name.
Now, let $v$ be a name and let $s$ be any condition such that $\forces{s}{\tau_1}{v\in\mathbb{I}}$. Then there exists a number $i\in\mathrm{I}^0_1$ such that $\forall^{\infty}x\in s\,(v(x)\leq i)$. The definition (\ref{eq:diagonal function}) of $d$ guarantees that for all $x\!> \!a_{i}$ it holds that $d(x)>i$, so we get $\forall^{\infty}x\in s\,\bigl(v(x)<d(x)\bigr)$ and then, by \cref{definition_tau 1} (v), we have $\forces{s}{\tau_1}{v<d}$. By the definition (\ref{item:ftr/forall}) of forcing a universal formula, we obtain $\forces{\mathbf{1}}{\tau_1}{\forall y\bigl(y\in\mathbb{I}\Rightarrow y<d\bigr)}$.

Finally, we show that $\mathbf{1}$ forces a strengthening of the last property of $\mathbb{I}$ mentioned by the axiom $\sca$: there exists $z>\mathbb{I}$ such that for all $x$ the value $2_z(x)$ exists.
Using the function $d$ defined above, for every name $v$ we can define a function $w$ by $\delo(A)$-comprehension as follows:
\begin{equation*}
w(x)= 2_{d(x)}(v(x)).
\end{equation*}
The function $w$ is total since each value of $d$ is in $\cutzo$, and for each number $x$ and each $i\in\mathrm{I}^0_1$ the value of $2_i(x)$ exists. Otherwise, for some number $x$ the set $J=\{j\in\nn\colon \exists y\, (y=2_j(x)) \}$ would be a $\sigzo$-definable cut strictly contained in $\mathrm{I}^0_1$ (note that `$y=2_j(x)$' can be expressed by a $\delz$ formula), which is impossible since $\mathrm{I}^0_1$ is the intersection of all $\sigzo$-definable cuts. 

Thus, $w$ is a valid name, and given its definition we can apply \cref{Delta_zero_Los}
to learn that $\forces{\mathbf{1}}{\tau_1}{w=2_d(v)}$. Since $v$ is an arbitrary name, we obtain that $\forces{\mathbf{1}}{\tau_1}{\forall x \,\exists y  \,(y=2_d(x))}$. Finally, we can conclude that $\forces{\mathbf{1}}{\tau_1}{\eg{z}{\mathbb{I}}\,\forall x \,\exists y  \,(y=2_z(x))}$, because we have already shown that $\forces{\mathbf{1}}{\tau_1}{d>\mathbb{I}}$.
\end{proof}


\subsection{Generic cut}\label{section:generic cut}

The definition of our second forcing interpretation is based on \cite[Definition 2.11]{pfsize} with the difference is that our forcing conditions are finite sets of cardinality greater than the cut $\mathbb{I}$ that are exponentially sparse, where a set $s=\{x_0, \dots, x_{n}\}$ is called \textit{exponentially sparse} if for each $i<n$ it holds that $2^{x_i}<x_{i+1}$.
Since the language $\ltwo$ has two sorts of variables, we specify two sets of names and distinguish two cases for the relation of validity. Both first- and second-order names are just arbitrary numbers, but in the latter case we think about names as codes for finite sets and denote them by capital letters. 

The intuition behind this forcing is as follows. If a condition $s$ is in the generic filter, then it will be cofinal in the cut that is being built. In particular, the cut will certainly be above $\min(s)$ and below $\max(s)$. If such an $s$ forces a name $v$ to be valid, then the set $s\cap[v, \infty)$ will also be in the generic filter, and thus $v$ will belong to the cut.

\begin{definition}\label{definition tau_2}
The forcing translation $\tau_2$ from the language $\ltwo$ to the language $\lone\cup\{\mathbb{I}\}$ is defined as follows:
\begin{enumerate}[(i)]
    \item $\cond{s}{\tau_2}$ is
    \begin{equation*}
    \forall x, y\in_{\Ack}s\,(x<y\Rightarrow 2^x<y)\,\,\wedge\,\, |s|>\mathbb{I};
    \end{equation*}
   
    \item $\lcond{s'}{s}{\tau_2}$ is
    \begin{equation*}
     \forall x\,(x\in_{\Ack}s'\Rightarrow x\in_{\Ack}s);  
    \end{equation*}
    
    \item $\nam{v}{\tau_2}$ is
    \begin{equation*}
    v=v;
    \end{equation*}

    \item $\nam{V}{\tau_2}$ is
    \begin{equation*}
    V=V;
    \end{equation*}
    
    \item $\validname{s}{\tau_2}{v}$ is 
    \begin{equation*}
    \cond{s}{\tau_2}\,\wedge\,\nam{v}{\tau_2}\,\wedge\, \ncond{s\cap [0, v]}{\tau_2};    
    \end{equation*}

    \item $\validname{s}{\tau_2}{V}$ is 
    \begin{equation*}
    \cond{s}{\tau_2}\,\wedge\,\nam{V}{\tau_2};    
    \end{equation*}

    \item if $\alpha(\vv)$ is an atomic formula of the form $t_1(\vv)=t_2(\vv)$ or $t_1(\vv)\leq t_2(\vv)$, then $\forces{s}{\tau_2}{\alpha(\vv)}$ is
    \begin{equation*}
    \validname{s}{\tau_2}{\vv} \,\wedge\,\, t_1(\vv)=t_2(\vv)  
    \end{equation*}    or
    \begin{equation*}
    \validname{s}{\tau_2}{\vv} \,\wedge\,\, t_1(\vv)\leq t_2(\vv), 
    \end{equation*} respectively;\label{item:tau2_atom1}
    
    \item if $\alpha(\vv, V)$ is an atomic formula of the form $t(\overline{v})\in V$, then $\forces{s}{\tau_2}{\alpha(\overline{v}, V)}$ is
    \begin{equation*}
    \validname{s}{\tau_2}{\vv}\,\wedge\,\,\validname{s}{\tau_2}{V}\,\wedge\,\, t(\vv)\in_{\Ack}V.  
    \end{equation*}\label{item:tau2_atom2}
\end{enumerate}
\end{definition}

We will often omit the subscript `$_{\Ack}$' when we work in $\sctheory$. This should not lead to any confusion, by our convention to use capital letters to denote names for sets.

Later it will be convenient to have the following list of simple properties of the set of forcing conditions of $\tau_2$. 

\begin{lemma}\label{conditions_2 - basics}
Let $s=\{s_1< \dots< s_c\}$ be a forcing condition of $\tau_2$. Then $\sctheory$ proves the following.
 
\begin{enumerate}[(a)]   
\item $s$ can be split into a disjoint union of two conditions $s=s_1 \sqcup s_2$ such that $\max(s_1)<\min(s_2)$.\label{item:cond_split}

\item Any subset of $s$ with at least $\lceil\sqrt{c}\,\rceil$-many elements is also a condition.\label{item:cond_sqrt}

\item For all names $v$ and $w$, if $\validname{s}{\tau_2}{v}$ and $w<v$, then $\validname{s}{\tau_2}{w}$.\label{item:cond_smaller-names}

\item For every name $v$, if $\validname{s}{\tau_2}{v}$, then $v<\max(s)$.\label{item:cond_less-max} 
\end{enumerate}  
\end{lemma}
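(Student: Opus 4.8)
All four items are about forcing conditions of $\tau_2$, so throughout we work in $\sctheory$ and freely use that $\mathbb{I}$ is a proper cut closed under multiplication and that for every $x$ there is $c>\mathbb{I}$ with $2_c(x)$ defined. Recall that a condition $s$ is a finite set coded in $M$ which is exponentially sparse (consecutive elements jump past the exponential of the previous one) and has $|s|>\mathbb{I}$. The key point to exploit repeatedly is that $\mathbb{I}$ is closed under multiplication: from $|s|>\mathbb{I}$ and closure under multiplication one can split or sparsify the cardinality while staying above $\mathbb{I}$.

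\textbf{Item (a).} Write $s=\{s_1<\dots<s_c\}$ with $c>\mathbb{I}$. The plan is to cut $s$ in the middle: put $s_1=\{s_1,\dots,s_{\lceil c/2\rceil}\}$ and $s_2=\{s_{\lceil c/2\rceil+1},\dots,s_c\}$. Both are subsets of $s$, hence still exponentially sparse (sparseness is inherited by any subset), and plainly $\max(s_1)<\min(s_2)$ and $s=s_1\sqcup s_2$. The only thing to check is that both have cardinality $>\mathbb{I}$; but if, say, $\lceil c/2\rceil\le i$ for some $i\in\mathbb{I}$, then $c\le 2i+1\le 3i$, and $3i\in\mathbb{I}$ by closure under multiplication (in fact closure under addition, which follows), contradicting $c>\mathbb{I}$. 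Formally one does this by $\idz$-induction / least number principle available in $\sctheory$; the split is given by a $\delz$ definition, so $s_1,s_2$ are coded.

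\textbf{Item (b).} Let $t\subseteq s$ with $|t|\ge\lceil\sqrt{c}\,\rceil$. Sparseness of $t$ is again inherited from $s$. For the cardinality bound: if $|t|\le i$ for some $i\in\mathbb{I}$, then $\lceil\sqrt c\,\rceil\le i$, so $c\le i^2$, and $i^2\in\mathbb{I}$ by closure under multiplication — again contradicting $c>\mathbb{I}$. (Here one should be a little careful to argue $c\le i^2$ rather than $c<(i+1)^2$ or similar, but this is routine $\idz$ reasoning about the graph of squaring, which is $\delz$-definable and provably well-behaved in $\idz+\expax$.) Hence $|t|>\mathbb{I}$ and $t$ is a condition.

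\textbf{Item (c).} Suppose $\validname{s}{\tau_2}{v}$, i.e. $\cond{s}{\tau_2}$ and $\ncond{s\cap[0,v]}{\tau_2}$, and let $w<v$. Since $w<v$ we have $s\cap[0,w]\subseteq s\cap[0,v]$. The failure of $s\cap[0,v]$ to be a condition cannot come from sparseness (a subset of a sparse set is sparse), so it must be that $|s\cap[0,v]|\le\mathbb{I}$, i.e. $|s\cap[0,v]|\le i$ for some $i\in\mathbb{I}$. Then $|s\cap[0,w]|\le|s\cap[0,v]|\le i$, so $s\cap[0,w]$ is not a condition either, which is exactly $\ncond{s\cap[0,w]}{\tau_2}$; together with $\cond{s}{\tau_2}$ and $\nam{w}{\tau_2}$ (which is just $w=w$) this gives $\validname{s}{\tau_2}{w}$.

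\textbf{Item (d).} Suppose $\validname{s}{\tau_2}{v}$ but $v\ge\max(s)$. Then $s\cap[0,v]=s$, so $\ncond{s\cap[0,v]}{\tau_2}$ says $\ncond{s}{\tau_2}$, contradicting $\cond{s}{\tau_2}$. Hence $v<\max(s)$.

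\textbf{Expected main obstacle.} None of the items is deep; the only real care needed is to make sure every cardinality manipulation ($c/2$, $\sqrt c$, products $2i$, $i^2$) is carried out within $\idz+\expax$ — i.e. using only $\delz$-definable relations and the induction/least-number principle actually available — rather than informally. Once one fixes the convention that the relevant auxiliary sets ($s_1$, $s_2$, $t$, $s\cap[0,v]$, $s\cap[0,w]$) are $\delz$-definable from $s$ (and $v$, $w$), they are coded by the coding machinery available under $\idzz+\expax$, and the bookkeeping with $\mathbb{I}$ reduces to its closure under multiplication, which is an axiom of $\sca$. So the ``hard part'' is purely one of being scrupulous about formalization, not of mathematical content.
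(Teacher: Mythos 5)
Your proposal is correct and takes essentially the same approach as the paper's own proof: split in the middle for (a), use closure of $\mathbb{I}$ under multiplication for (b), observe that subsets of a sparse set are sparse and cardinality is monotone under $\subseteq$ for (c), and note $s\cap[0,v]=s$ for (d). The only differences are cosmetic (e.g.\ $\lceil c/2\rceil$ vs.\ the paper's $\lfloor c/2\rfloor$, and some redundant cautionary remarks about formalization).
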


\begin{proof}
We reason in $\sctheory$.
\begin{enumerate}[(a)]
\item Let $s_1\defeq\{s_1<\dots< s_{\lfloor\frac{c}{2}\rfloor}\}$ and $s_2\defeq s\setminus s_1$. Clearly, both $s_1$ and $s_2$ are exponentially sparse. Since $|s|=c\leq\lfloor\frac{c}{2}\rfloor+\lfloor\frac{c}{2}\rfloor+1$ and $\mathbb{I}$ is an initial segment closed under addition, we must have $|s_1|=\lfloor\frac{c}{2}\rfloor\notin\mathbb{I}$, so both $s_1$ and $s_2$ are conditions.

\item Let $s'$ be a subset of $s$ with $\lceil\sqrt{c}\,\rceil$-many elements.
Since $|s|<\left(\lceil\sqrt{c}\,\rceil\right)^2$ and $\mathbb{I}$ is an initial segment closed under multiplication, we have $\lceil\sqrt{c}\,\rceil\notin\mathbb{I}$. Clearly, $s'$ is also exponentially sparse, so it is a condition.

\item If $w<v$, then $s\cap[0, w]\subseteq s\cap[0, v]$, and thus  $|s\cap[0, w]|\leq|s\cap[0, v]|\in\mathbb{I}$, so $w$ is not a condition.

\item If $v\geq\max(s)$, then $s\cap[0, v]=s\in\mathrm{Cond}_{\tau_2}$.\qedhere 
\end{enumerate}
\end{proof}
Note that all the clauses of the above lemma are formulated as single sentences and not as schemes, so including (the formal version of) their proofs in any other proof we construct will only increase the time complexity of the relevant algorithm by a fixed additive constant.

\begin{lemma}\label{tau-2 is a FI}
$\tau_2$ is a polynomial forcing interpretation of the language $\ltwo$ in the theory $\sctheory$. 
\end{lemma}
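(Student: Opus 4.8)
The plan is to verify the defining conditions (FI1)--(FI14) of \cref{definition:FI} for $\tau_2$ as an interpretation of $\ltwo$ in $\sctheory$, and to observe that each verification is uniform enough to be carried out by a polynomial-time algorithm. Since pure $\ltwo$ has no axioms of its own beyond logic and equality, condition (FI14) is vacuous here, so the real work is the preorder, genericity, monotonicity, equality, function-value and density clauses. The single-sentence conditions (FI1)--(FI5), (FI7)--(FI9) are immediate from \cref{definition tau_2}: $\mathrm{Cond}_{\tau_2}$ is nonempty because $\sca$ provides $x>\mathbb{I}$ and one can build an exponentially sparse set of cardinality $x+1$ (using $2^{(\cdot)}$, which is total in $\sctheory$); $\trianglelefteqslant_{\tau_2}$ is literally inclusion of coded sets, hence a preorder; (FI4) follows from \cref{conditions_2 - basics}(a), which splits any condition into two, at least one of which forces a name to be valid by choosing a name below its minimum; and the equality clauses hold because on atoms $\tau_2$ forces a genuine equality/inequality of numbers, so reflexivity, symmetry and transitivity of $=$ are inherited from the interpreting theory. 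Condition (FI5) (monotonicity of name-validity) uses \cref{conditions_2 - basics}(c) together with the fact that shrinking $s$ only shrinks $s\cap[0,v]$, keeping its cardinality in $\mathbb{I}$.

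The slightly more delicate single-sentence condition is (FI10) (function values are defined and unique). For the three function symbols $0,1$ (nullary) and $+,\cdot$ of $\lone$ — recall $\ltwo$ adds no function symbols on the second sort — given names $\vv$ valid below $s$, the value $t(\vv)$ is just the corresponding number, and one must exhibit a condition $s''\trianglelefteqslant_{\tau_2}s'$ below any $s'$ that forces this name to be valid; this again uses \cref{conditions_2 - basics}(a),(c) to pass to a subcondition whose minimum exceeds $t(\vv)$. Uniqueness of the value is then forced because the atomic clause (vii) forces a real equality. Condition (FI11) (substitutivity for terms in atoms) is handled, as in \cref{tau-1 is a FI}, by induction on term complexity using the equality axioms of $\sctheory$, and the proof template does not depend substantially on the shape of the atom, so it is producible in time polynomial in $|\alpha|$.

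The heart of the lemma is the density conditions (FI12) and (FI13), the only genuinely non-trivial clauses and the place where exponential sparseness and the strong closure properties of $\mathbb{I}$ from $\sca$ (closure under multiplication, plus the existence of $z>\mathbb{I}$ with $2_z(x)\!\downarrow$) are actually used. For (FI13), fix an atomic $\alpha(\vv)$; suppose no $s''\trianglelefteqslant_{\tau_2}s'$ forces $\alpha(\vv)$, and derive $\nforces{s}{\tau_2}{\alpha(\vv)}$. For atoms of the first kind ($t_1(\vv)=t_2(\vv)$ or $t_1(\vv)\leq t_2(\vv)$), the forcing clause is $\validname{s}{\tau_2}{\vv}\wedge\theta$ where $\theta$ is a parameter-free arithmetic statement, so if $\theta$ holds one only needs a dense set of subconditions validating $\vv$, which \cref{conditions_2 - basics}(a),(c) provide; if $\theta$ fails, no condition forces $\alpha$ at all, and we are done. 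For the membership atom $t(\vv)\in V$ the situation is analogous since $t(\vv)\in_{\Ack}V$ is again parameter-free. The point where one must be careful — and where I expect the main obstacle to lie — is that ``passing to a subcondition'' must preserve being a condition, i.e. must keep the cardinality above $\mathbb{I}$; this is exactly what \cref{conditions_2 - basics}(b) buys us (any $\lceil\sqrt{c}\,\rceil$-sized subset of a condition is a condition, using closure of $\mathbb{I}$ under multiplication), and what makes it safe to throw away the ``small'' part of a condition below a given bound. Condition (FI12) (density for name-validity) is the first-order-name version of the same argument: if below every $s'\trianglelefteqslant_{\tau_2}s$ there is an $s''$ with $|s''\cap[0,v]|\in\mathbb{I}$, one concludes $|s\cap[0,v]|\in\mathbb{I}$ by a similar counting argument, again invoking \cref{conditions_2 - basics}. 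Finally, I would note that every step above is executed by substituting a bounded amount of syntactic data (the atom $\alpha$, the terms $t_i$) into a fixed proof template, with the term-substitution induction of (FI11) being the only place where the running time grows with the input, and there only polynomially; hence $\tau_2$ is a \emph{polynomial} forcing interpretation, as claimed.
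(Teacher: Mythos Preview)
Your overall plan---verify (FI1)--(FI13) and observe that each step is a fixed template---is correct, but you have inverted where the difficulty lies, and the arguments you sketch for (FI12)--(FI13) are not the right ones.

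The density conditions are \emph{not} the heart of the lemma; they require neither exponential sparseness, nor closure of $\mathbb{I}$ under multiplication, nor the superexponential clause of $\sca$. For (FI12) the argument is a one-line contraposition: if $\nvalidname{s}{\tau_2}{v}$, then $s\cap[0,v]$ is itself a condition $\trianglelefteqslant_{\tau_2}s$, and every $s''\trianglelefteqslant_{\tau_2}s\cap[0,v]$ satisfies $s''\cap[0,v]=s''$, hence $\nvalidname{s''}{\tau_2}{v}$. Your ``similar counting argument invoking \cref{conditions_2 - basics}'' never gets off the ground, and in particular \cref{conditions_2 - basics}\,\ref{item:cond_sqrt} plays no role whatsoever in this lemma---it is used only later, when forcing $\cac$. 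For (FI13), since forcing an atom factors as $\validname{s}{\tau_2}{\vv}\wedge\theta$ with $\theta$ not mentioning $s$, density for atoms reduces immediately to (FI12). Your case split on $\theta$ is fine in outline, but the sentence ``one only needs a dense set of subconditions validating $\vv$, which \cref{conditions_2 - basics}(a),(c) provide'' is backwards: in that case you are trying to conclude that $s$ does \emph{not} force $\alpha$, and the hypothesis already gives you an $s'$ below which \emph{no} extension validates $\vv$---you do not need to manufacture one.

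The genuine use of exponential sparseness is in (FI10). The paper does not pass to a tail subcondition whose minimum exceeds $t(\vv)$, as you propose; instead it proves the stronger statement that $s$ itself validates $t(\vv)$ whenever $\validname{s}{\tau_2}{\vv}$, via the observation that for $v'=\max(\vv)$ one has $|s\cap[0,v'^2+1]|\leq|s\cap[0,v']|+1\in\mathbb{I}$ because consecutive elements of $s$ differ by at least an exponential. This stronger form is exactly what is invoked in the proof of \cref{t2-delta0-elementarity}. Your tail-passing idea can be made to work for (FI10) as literally stated, but you have not argued that the tail above $t(\vv)$ still has cardinality $>\mathbb{I}$, and you would then be missing the ingredient needed for the next lemma.
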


\begin{proof}
We first check that $\sctheory$ proves that $\tau_2$ satisfies the conditions from \cref{definition:FI} given by single sentences, that is, (\ref{item:fi/non-empty-preorder})-(\ref{item:fi/monotonicity_names}), (\ref{item:fi/equality:reflexive})-(\ref{item:fi/equality:transitive}) and (\ref{item:fi/density_names}).

The relation $\trianglelefteqslant_{\tau_2}$ is clearly a preorder because it coincides with set inclusion. To see that its field is nonempty, note that the axiom $\sca$ guarantees that there are in fact unboundedly many forcing conditions: for any number $x$, if $y$ is a number above the cut $\mathbb{I}$ such that the value $2_y(x)$ exists, then for $y'\colon\!\!=\!2\cdot\lfloor\frac{y}{2}\rfloor$ the set $s=\{2_2(x), 2_4(x), \dots, 2_{y'}(x)\}$ is exponentially sparse and has $y'$-many elements, where $y'>\mathbb{I}$ because $\mathbb{I}$ is closed under multiplication. The code for $s$ exists because it is bounded by the value $2_{y'+1}(x)$. Thus, the conditions (\ref{item:fi/non-empty-preorder})-(\ref{item:fi/preorder:transitive}) are satisfied.

For (\ref{item:fi/non-empty-generic}), which says that any generic model is nonempty, note that every condition forces each standard natural number to be a valid name. 
To see that the monotonicity condition for names (\ref{item:fi/monotonicity_names}) holds, assume that $\lcond{s'}{s}{\tau_2}$ and $\validname{s}{\tau_2}{v}$, which means that $|s\cap [0, \dots, v]|\in\mathbb{I}$. Then, by the definition of $\trianglelefteqslant_{\tau_2}$, we get $s'\cap [0, \dots, v]\subseteq s\cap [0, \dots, v]$, so $|s'\cap [0, \dots, v]|\in\mathbb{I}$ and thus $\validname{s'}{\tau_2}{v}$, as required.

The conditions (\ref{item:fi/equality:reflexive})-(\ref{item:fi/equality:transitive}) expressing that equality is an equivalence relation follow immediately from the definition of forcing atomic formulas. 
Concerning the density condition \eqref{item:fi/density_names}, we only need some care in the case of first-order names, because the definition of $\validname{s}{\tau_2}{v}$ is nontrivial. 
We reason by contraposition. Let $s$ and $v$ be such that $\nforces{s}{\tau_2}{v}$. This means that the set $s\cap[0, v]$ is a condition, i.e. it has more than $\mathbb{I}$ many elements. But then for any condition $\lcond{s'}{s\cap[0, v]}{\tau_2}\!\trianglelefteqslant\! s$ it holds that $s'\cap[0, v]=s'$, so it cannot force $v$ to be a valid name. Hence, the antecedent of \eqref{item:fi/density_names} fails for $s$ and $v$.

The condition (\ref{item:fi/equality:functions}), saying that the values of functions are well-defined, is also given by a single sentence because there are only three function symbols in the interpreted language. In fact, we can show something stronger than (\ref{item:fi/equality:functions}): there exists a polynomial-time algorithm which, given as input a term $t(\xx)$ of $\ltwo$, outputs a proof of the sentence:
\begin{multline}\label{eq:valid-values-of-terms}
 \!\!\!\!\!\forall \cond{s}{\tau_2}\,\forall\nam{\vv}{\tau_2} \,\Bigl( \validname{s}{\tau_2}{\vv}\,\Rightarrow \bigl(\exists\nam{w}{\tau_2}\,(\validname{s}{\tau_2}{w}\,\wedge\, \forces{s}{\tau_2}{t(\vv)=w}) \\ \wedge\,
   \forces{s}{\tau_2}{\forall w, w'\,(w=t(\vv) \,\wedge\,w'=t(\vv)\,\,\Rightarrow\,\,w=w')}   \bigr)
   \Bigr){\;\;\;\;\;\;\;} 
\end{multline}
We describe how to construct a proof of the above sentence for any term $t(\xx)$ of $\ltwo$. One picks arbitrary $s$ and $\vv$ and assumes that $\validname{s}{\tau_2}{\vv}$. The proof of the uniqueness of the value $t(\vv)$ requires only invoking the definition of forcing for atomic formulas as well as clauses (\ref{item:ftr/then}) and (\ref{item:ftr/forall}). This is a single template in which one has to substitute the term $t$ a fixed number of times. 

The proof of the existence of the value $t(\vv)$ is constructed by recursion on subterms of $t$. 
The base step is trivial as we have to consider subterms which are either variables or numerals. 
For the recursive step, assume that the algorithm has already constructed proofs of the existence of the values $v_1$, $v_2$ for subterms $r_1(\vv)$, $r_2(\vv)$. 
By \cref{conditions_2 - basics} \ref{item:cond_smaller-names}, it is enough to check that there is a valid name $w$ for the complex term $v' \cdot v'+1$, where $v'=\max\{v_1, v_2\}$. 
So, from the assumption $\validname{s}{\tau_2}{v'}$ we know that 
$|s\cap[0, v']|$ is in $\mathbb{I}$. Since $s$ is exponentially sparse, it holds that $|s\cap[0, v'^2+1]|\leq |s\cap[0, v']|+1\in\mathbb{I}$, so $s\cap[0, v'^2+1]$ is not a condition either and thus $\validname{s}{\tau_2}{v'^2+1}$.
The proof of (\ref{eq:valid-values-of-terms}) for $t$ is finished by recalling \cref{forcing existential} (a) and noting that $s$ and $\vv$ were arbitrary. 

The schematic conditions (\ref{item:fi/monotonicity_atoms}), (\ref{item:fi/substitution}) and (\ref{item:fi/density_atoms}), which concern forcing atomic formulas, follow immediately from the definition of $\tau_2$. The algorithm constructs their proofs using three fixed templates, in which it substitutes expressions like $\forces{s}{\tau_2}{\alpha(\vv)}$ and, in case of (\ref{item:fi/monotonicity_atoms}) and (\ref{item:fi/density_atoms}), also previously constructed proofs for (\ref{item:fi/monotonicity_names}) and (\ref{item:fi/density_names}), respectively. This clearly takes time polynomial in the size of a given atomic formula $\alpha$.
\end{proof}


The next lemma extends clause (viii) of \cref{definition tau_2} and says that a $\delzz$ formula is forced if and only if its Ackermann translation holds. Here, by \emph{the Ackermann translation} of a $\delzz$ formula $\varphi$ we mean an $\lone$-formula 
$\varphi_{\Ack}$ which is obtained from $\varphi$ by replacing all atomic subformulas of the form `$t\in X$' with `$t\in_{\Ack}X$', where the number variable $X$ does not occur in $\varphi$ -- recall that here we use capital letters for numbers that occur as names for sets. 
To keep the notation simple, we do not distinguish between an $\ltwo$-formula and its Ackermann translation.

\begin{lemma}\label{t2-delta0-elementarity}
There exists a polynomial-time algorithm which, given as input a $\delzz$ formula $\varphi(\xx, \XX)$ of $\ltwo$, outputs a proof in $\sctheory$ of the sentence: 
\begin{equation}\label{eq:tau2-delta00-elementarity}
\forall\cond{s}{\tau_2}\,\forall\nam{\vv, \VV}{\tau_2}\bigl(\validname{s}{\tau_2}{\vv}\,\Rightarrow\bigl(\forces{s}{\tau_2}{\varphi(\vv, \VV)} \Leftrightarrow \varphi(\vv, \VV)\bigr)\bigr).    
\end{equation}
\end{lemma}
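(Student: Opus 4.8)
The plan is to prove \eqref{eq:tau2-delta00-elementarity} by recursion on the subformulas of $\varphi$, exactly in the style of the proof of \cref{Delta_zero_Los}: the algorithm first produces a proof for the atomic subformulas of $\varphi$ using a single fixed template, and then, given a complex subformula $\psi$ together with already-constructed proofs of \eqref{eq:tau2-delta00-elementarity} for its immediate subformulas, it merges them into a proof for $\psi$ by running one of three fixed subroutines according to whether $\psi$ is a negation, an implication, or a bounded universal formula. Since a $\delzz$ formula of size $n$ has at most $n$ subformulas and each merging step takes time polynomial in the size of the relevant subformula, the whole construction is polynomial in $|\varphi|$.

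First I would handle the base case. The atomic subformulas of $\varphi$ have the form $t_1(\vv)=t_2(\vv)$, $t_1(\vv)\leq t_2(\vv)$, or $t(\vv)\in_{\Ack}V$; in each case clauses \ref{item:tau2_atom1} and \ref{item:tau2_atom2} of \cref{definition tau_2} say that forcing the atom by $s$ is literally the conjunction of $\validname{s}{\tau_2}{\vv}$ (and $\validname{s}{\tau_2}{V}$) with the Ackermann translation of the atom. So under the hypothesis $\validname{s}{\tau_2}{\vv}$ the equivalence is immediate once we know $\validname{s}{\tau_2}{V}$ holds for free (it is just $\cond{s}{\tau_2}\wedge V=V$, so it follows from $\cond{s}{\tau_2}$). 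One subtlety: for an atom $t(\vv)\in_{\Ack}V$ the term $t(\vv)$ may be complex, so the validity of the name $t(\vv)$ is not literally assumed; here I would invoke \eqref{eq:valid-values-of-terms} from the proof of \cref{tau-2 is a FI} (or just \cref{conditions_2 - basics}\ref{item:cond_smaller-names} together with the fact that $\validname{s}{\tau_2}{\vv}$) to see that the value of $t(\vv)$ is still forced to be a valid name, although this is not even needed for the bare equivalence since the definition of forcing the atom carries its own validity conjuncts.

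For the recursive step the negation and implication subroutines are the delicate ones, because unfolding \ref{item:ftr/neg} and \ref{item:ftr/then} introduces quantification over conditions $s'\condless{\tau_2}s$, so the naive equivalence is not syntactically obvious — this is the main obstacle. The key point that makes it work is the density property: by \cref{general monotonicity and dencity}, applied to the $\delzz$ subformulas already treated, forcing of such a subformula is downward-dense, and by \cref{no-contradiction-forced} a subformula and its negation are never simultaneously forced. Concretely, suppose we have \eqref{eq:tau2-delta00-elementarity} for $\theta$, and $\validname{s}{\tau_2}{\vv}$ holds. Using \cref{conditions_2 - basics}\ref{item:cond_smaller-names} and the fact that by \cref{conditions_2 - basics}\ref{item:cond_split}–\ref{item:cond_sqrt} every condition below $s$ still forces the $\vv$ to be valid (shrinking a condition never invalidates an already-valid name, since it only shrinks $s\cap[0,v]$), the inductive hypothesis \eqref{eq:tau2-delta00-elementarity} applies with $s$ replaced by any $s'\condless{\tau_2}s$; hence $\forces{s'}{\tau_2}{\theta(\vv,\VV)}$ is equivalent to $\theta(\vv,\VV)$ \emph{uniformly in $s'$}, i.e.\ its truth does not depend on $s'$ at all. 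Feeding this into \ref{item:ftr/neg} collapses the $\forall s'$ to a triviality, yielding $\forces{s}{\tau_2}{\neg\theta}\Leftrightarrow\neg\theta$; feeding it into \ref{item:ftr/then} likewise collapses the $\forall s'\exists s''$ block. The bounded-universal subroutine is then handled just as in \cref{Delta_zero_Los}: a subformula $\fale{y}{t(\zz)}\theta(y,\zz)$ is translated as $\forall y(y\leq t(\zz)\Rightarrow\theta(y,\zz))$, and one proves both directions by contraposition, using clause \ref{item:ftr/forall}, the already-established equivalence for the atom $y\leq t(\zz)$ and for $\theta$, and the observation that a bounded existential witness $w\leq t(\vv)$ is, by \cref{conditions_2 - basics}\ref{item:cond_smaller-names}, automatically a valid name for every condition once the bound's value is. Each of the three subroutines is a fixed template with a constant number of blanks into which the current subformula and term are substituted, so the overall time bound is polynomial in $|\varphi|$, as claimed.
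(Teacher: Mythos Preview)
Your proposal is correct and follows essentially the same approach as the paper: recursion on subformulas, with the atomic case read off from clauses \ref{item:tau2_atom1}--\ref{item:tau2_atom2} of \cref{definition tau_2}, and the complex cases handled by collapsing the condition-quantifiers using the inductive hypothesis that forcing of a $\delzz$ subformula is equivalent to its truth and hence independent of the condition. The only minor deviation is that the paper treats the bounded-universal step directly rather than by contraposition in the style of \cref{Delta_zero_Los}; both routes work, and the paper's direct argument is slightly cleaner here since no new condition or witness function needs to be constructed---one simply uses \eqref{eq:valid-values-of-terms} and \cref{conditions_2 - basics}\ref{item:cond_smaller-names} to see that any $u\leq t(\vv)$ is already a valid name for $s$ itself.
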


\begin{proof}
We show how to construct a proof of (\ref{eq:tau2-delta00-elementarity}) for a given $\delzz$ formula $\varphi$ by recursion on its subformulas. The argument that the construction can be performed by a polynomial-time algorithm is similar to the one in the proof of \cref{Delta_zero_Los}.

The base step for atomic subformulas of $\varphi$ is guaranteed by clauses \ref{item:tau2_atom1} and \ref{item:tau2_atom2} of \cref{definition tau_2}. 
There are three cases of a complex subformula $\psi(\xx, \XX)$, depending on whether $\psi$ is a negation, an implication or a universally quantified formula. The cases of negation and implication are straightforward and use only conditions (\ref{item:ftr/neg}) and (\ref{item:ftr/then}), as well as \cref{no-contradiction-forced}. We discuss only the case of a universal subformula of the form $\fale{y}{t(\xx)\,\theta(y, \xx, \XX)}$, which is shorthand for $\forall y \bigl(y\leq t(\xx)\Rightarrow\theta(y, \xx, \XX)\bigr)$, where $y$ is not among $\xx$. 

Suppose that we have already constructed a proof of (\ref{eq:tau2-delta00-elementarity}) for $\theta(y, \xx, \XX)$. Let $\vv$ and $\VV$ have the same length as $\xx$ and $\XX$, respectively, and let $\validname{s}{\tau_2}{\vv}$. 
Assume that $\forces{s}{\tau_2}{\forall y \bigl(y\leq t(\vv)\Rightarrow\theta(\vv, \VV)\bigr)}$. Take some $u\leq t(\vv)$.
By the proof of \cref{tau-2 is a FI}, specifically by the proof of \eqref{eq:valid-values-of-terms}, there exists a name $w$ such that $\validname{s}{\tau_2}{w}$ and $\forces{s}{\tau_2}{w=t(\vv)}$.
By the definition of forcing for atomic formulas, we know that $w=t(\vv)$ so, by \cref{conditions_2 - basics} \ref{item:cond_smaller-names}, we get $\validname{s}{\tau_2}{u}$. Again, by the definition of forcing for atomic formulas we get $\forces{s}{\tau_2}{u\leq t(\vv)}$. By (\ref{item:ftr/then}) and (\ref{item:ftr/forall}), there exists $\lcond{s'}{s}{\tau_2}$ such that $\forces{s'}{\tau_2}{\theta(u, \vv, \VV)}$ so, by the previously constructed proof of (\ref{eq:tau2-delta00-elementarity}) for $\theta(y, \xx, \XX)$, we obtain $\theta(u, \vv, \VV)$. Since $u$ was arbitrary, we conclude that $\fale{y}{t(\xx)\,\theta(y, \xx, \XX)}$ holds.

Conversely, assume that $\forall y \bigl(y\leq t(\vv)\Rightarrow\theta(\vv, \VV)\bigr)$ holds. Take a name $u$ and a condition $\lcond{s'}{s}{\tau_2}$ such that $\validname{s'}{\tau_2}{u}$ and $\forces{s'}{\tau_2}{u\leq t(\vv)}$. By the definition of forcing for atomic formulas we have $u\leq t(\vv)$ so, by our assumption, $\theta(u, \vv, \VV)$ holds. By the previously constructed proof of (\ref{eq:tau2-delta00-elementarity}) for $\theta(y, \xx, \XX)$, we get $\forces{s'}{\tau_2}{\theta(u, \vv, \VV)}$ so, by the definitions of forcing implication (\ref{item:ftr/then}) and a universal sentence (\ref{item:ftr/forall}), we obtain that $\forces{s}{\tau_2}{\forall y \bigl(y\leq t(\vv)\Rightarrow\theta(\vv, \VV)\bigr)}$.
\end{proof}

It remains to prove that all the axioms of $\wkls+\cac$ are forced. 

\begin{lemma}\label{tau2-is-polynomial-FI-of-CAC}
The forcing translation $\tau_2$ is a polynomial forcing interpretation of $\wkls+\cac$ in the theory $\sctheory$.
\end{lemma}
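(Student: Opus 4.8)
By \cref{tau-2 is a FI} and the remark following \cref{definition:FI}, the translation $\tau_2$ already meets conditions (FI1)--(FI13), so the only thing still to establish is the schematic condition (FI14): for each axiom $\sigma$ of $\wkls+\cac$ the theory $\sctheory$ must prove $\forall\cond{s}{\tau_2}(\forces{s}{\tau_2}{\sigma})$, equivalently (by \cref{general monotonicity and dencity}) that $\mathbf{1}$ forces $\sigma$. Since $\wkls+\cac$ is finitely axiomatized under the conventions of \cref{chapter:preliminaries}, polynomiality is automatic here --- the algorithm just outputs one of finitely many fixed proofs --- so the plan is simply to force each axiom in turn.

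For the finitely many axioms of $\pam$, the single $\delzz$-induction sentence and the totality of exponentiation $\expax$, I would reduce forcing to plain truth in $\sctheory$ via \cref{t2-delta0-elementarity}: first-order names are numbers, quantifier-free and $\delzz$ matrices are absolute between the interpreting structure and the generic cut, and the few existential witnesses that occur are supplied by \cref{conditions_2 - basics}\ref{item:cond_smaller-names}. For $\expax$ this takes the concrete form: if $v$ is a valid name at a condition $s$, then, since $s$ is exponentially sparse, $\lvert s\cap[0,2^v]\rvert\le\lvert s\cap[0,v]\rvert+1\in\mathbb{I}$, so $2^v$ is again a valid name at $s$ and $\forces{s}{\tau_2}{\exists y\,(2^v=y)}$. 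For the $\delzo$-comprehension scheme and for $\mathsf{WKL}$ I would follow the corresponding verifications in \cite{pfsize}: one shows that the generic cut is generic enough that every subset of it defined by a $\delzo$ formula (with the two definitions forced equivalent) is coded, and that every tree forced to be infinite has a forced branch. In both cases the mechanism is $\mathsf{I}\Delta_0$-overspill \emph{in the interpreting theory} $\sctheory$: one takes a condition $s$, notes that the relevant $\delzz$ predicate --- the bounded approximation of the $\Sigma^0_1$ definition, respectively ``there is a node of length $x$'' --- holds on the proper cut of valid names at $s$, and overspills to a genuine bound above that cut.

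The genuinely new content is forcing $\cac$, and this is where the polynomial upper bound for the finite chain--antichain principle, derived from \cref{thm:Dilworth} and provable in $\sctheory$, comes in. Fix a set name $P$, write $\preccurlyeq$ for the coded relation, and let $s$ be a condition forcing ``$P$ is a partial order on $\nn$''. The key preliminary point is that for every element $e$ of $s$ that is a valid name at $s$, the $\delzz$ statement ``$\preccurlyeq$ restricted to $[0,e]$ is a partial order'' holds in the interpreting structure: by \cref{conditions_2 - basics}\ref{item:cond_smaller-names} every number $\le e$ is then a valid name at $s$, so unfolding the forced reflexivity, antisymmetry and transitivity of $P$ and invoking \cref{t2-delta0-elementarity} yields the claim. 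This $\delzz$ predicate does not mention $\mathbb{I}$, is downward closed in its argument, and holds on the proper cut $I_s\defeq\{v:\validname{s}{\tau_2}{v}\}$; hence $\mathsf{I}\Delta_0$-overspill produces $a>I_s$ with $\preccurlyeq$ restricted to $[0,a]$ a genuine partial order. Then $s'\defeq s\cap[0,a]$ is a condition below $s$ --- it is exponentially sparse and $\lvert s'\rvert\notin\mathbb{I}$ because $a$ lies above the cut $I_s$ --- and $(s',\preccurlyeq)$ is an honest finite partial order. Applying the finite bound for $\cac$ inside $\sctheory$ with $k$ maximal such that $k(k-1)\le\lvert s'\rvert$, and using that $\mathbb{I}$ is closed under multiplication (part of $\sca$), one gets $k\notin\mathbb{I}$, so the resulting chain or antichain $H\subseteq s'$ still has more than $\mathbb{I}$ many elements and is again a condition, $\lcond{H}{s}{\tau_2}$.

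It then remains to check that $H$ forces the code of $H$ to be an unbounded chain (respectively antichain) in $\preccurlyeq$. Unboundedness is forced because, whenever $\lcond{H'}{H}{\tau_2}$ and $v$ is a valid name at $H'$, the least element $w$ of $H'$ above $v$ is again a valid name at $H'$ (its initial segment in $H'$ has exactly one more element than that of $v$) and lies in $H$; the chain/antichain property is a $\Pi^0_1$ statement with a $\delzz$ matrix, so by \cref{t2-delta0-elementarity} it is forced by $H$ precisely because its matrix holds at every pair of valid names belonging to $H$, which is exactly the combinatorial fact that $H$ is a chain, respectively antichain, in $(s',\preccurlyeq)$. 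Thus every condition forces $\cac$, completing (FI14). I expect the main obstacle to be exactly this $\cac$ case: turning the informal ``at the $i$-th step apply Dilworth'' of the model-theoretic argument into the forcing language, and in particular arranging --- by overspill within $\sctheory$ --- that the coded relation is a genuine partial order on a condition whose cardinality still exceeds $\mathbb{I}$, so that the polynomial finite bound applies and the extracted homogeneous set is provably forced to be unbounded. The remaining axioms should require only adapting the corresponding verifications from \cite{pfsize}.
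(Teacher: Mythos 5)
Your proof is essentially correct, but for the central $\cac$ case you take a genuinely different route from the paper. You observe that for every valid name $e$ at $s$, the $\Delta_0$ sentence ``$\preccurlyeq$ restricted to $[0,e]$ is a partial order'' holds; since this formula does not mention $\mathbb{I}$ and is downward closed, $\idz$-overspill over the cut $I_s = \{v : \validname{s}{\tau_2}{v}\}$ gives an $a > I_s$ with $\preccurlyeq$ a genuine partial order on $[0,a]$, and $s' = s\cap[0,a]$ is then a condition because $a\notin I_s$. The paper instead avoids overspill entirely: it splits $s$ into a disjoint union $s = s_1\sqcup s_2$ with $\max(s_1) < \min(s_2)$ (\cref{conditions_2 - basics}\ref{item:cond_split}), notes that every element of $s_1$ is then automatically a valid name at $s_2$, and uses monotonicity plus \cref{t2-delta0-elementarity} to conclude that $\preccurlyeq$ is a genuine partial order on $s_1\times s_1$, after which Dilworth is applied directly to $s_1$. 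Both approaches manufacture, in $\sctheory$, a condition-sized set on which $\preccurlyeq$ is honestly a partial order, then apply \cref{thm:Dilworth} together with closure of $\mathbb{I}$ under multiplication and \cref{conditions_2 - basics}\ref{item:cond_sqrt}; your overspill argument is a natural alternative, while the paper's splitting trick is slightly more self-contained because it never leaves the combinatorics of conditions. One thing worth flagging: your sketch for $\mathsf{WKL}$ and $\delzo$-comprehension again invokes $\idz$-overspill on the cut of valid names, whereas the paper's proof avoids overspill there too --- it reduces both to $\Sigma^0_1$-separation and writes down a concrete $\Delta_0$-definable separating set $W$ bounded by $\max(s)$, then verifies the separation is forced via \cref{t2-delta0-elementarity} and \cref{conditions_2 - basics}\ref{item:cond_less-max}. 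Your overspill-based sketch would need to be worked out carefully (it is not obviously equivalent to the $\Sigma^0_1$-separation argument), so this part of the proposal is underspecified rather than wrong.
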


\begin{proof}
It is enough to show that $\sctheory$ proves that every condition of \\ $\tau_2$ forces every axiom of $\wkls+\cac$. Then the polynomiality of $\tau_2$ will follow by \cref{tau-2 is a FI} and by our assumption from \cref{chapter:preliminaries} that $\rcas$ is finitely axiomatized.

So, we reason in $\sctheory$.
It follows immediately from \cref{t2-delta0-elementarity} and the definition of forcing a universal formula (\ref{item:ftr/forall}) that the axioms of $\pam$ are forced by every condition.
Let us check that $\expax$ is also forced. Suppose that $\validname{s}{\tau_2}{v}$, where $s=\{s_1< \dots< s_c\}$. Then $s\cap[0, v]=\{s_1, \dots, s_i\}$ is not a condition, so $i\in\mathbb{I}$.
Since $v<s_{i+1}$ and $s$ is exponentially sparse, we have $2^v< s_{i+2}$,
and hence $s\cap[0, 2^v]\subseteq\{s_1, \dots, s_{i+1}\}$. Thus, we get
$|s\cap[0, 2^v]|\leq i+1\in\mathbb{I}$,
and so $\validname{s}{\tau_2}{2^v}$.
By \cref{t2-delta0-elementarity}, this implies $\forces{s}{\tau_2}{\expax}$ because `$y=2^x$' is a $\delz$ formula.

\smallskip
For $\delzz$-induction, 
by our choice of the axiomatization of $\rcas$, we can assume that it is given by a single sentence $\forall \XX\,\forall \xx\,\varphi(\XX, \xx)$, where $\varphi$ is $\delzz$. Let $\VV$ and $\vv$ be tuples of names of the same length as $\XX$ and $\xx$, and let $s$ be a condition such that $\validname{s}{\tau_2}{\vv}$ holds. Since we work under $\idz$, we know that (the Ackermann translation of) $\varphi(\VV, \vv)$ holds. Therefore, by \cref{t2-delta0-elementarity}, we obtain $\forces{s}{\tau_2}{\varphi(\VV, \vv)}$, as required.

\smallskip
To prove that $\mathsf{WKL}$ and $\delzo$-comprehension are forced we follow a fragment of the proof of Lemma 2.15 in \cite{pfsize}.
Recall that the $\Sigma^0_1$-separation principle is the following scheme:
\begin{multline*}
\forall \zz\,\forall\ZZ \bigl(\forall x \bigl(\exists y \,\varphi_1(x, y, \zz, \ZZ) \Rightarrow \neg\exists y\,\varphi_2(x, y, \zz, \ZZ)\bigr) \Rightarrow \\  \exists X\forall x \bigl((\exists y\,\varphi_1(x, y, \zz, \ZZ)\Rightarrow x\in X) \wedge (\exists y\,\varphi_2(x, y, \zz, \ZZ)\Rightarrow x\notin X) \bigr)\bigr),     
\end{multline*}
where $\varphi_1$, $\varphi_2$ are $\delzz$. By \cite[Lemma~IV.4.4]{Simpson_Book}), this principle implies over $\rca$ both $\mathsf{WKL}$ and $\delzo$-comprehension, and it is easy to check that the implication remains valid over $\rcas$ (cf. the proof of Lemma 3.2 in \cite{fkwy}). 
Note that we only need to show that every condition forces a fixed finite number of instances of $\Sigma^0_1$-separation which is needed to prove $\mathsf{WKL}$ and the finitely many instances of $\delzo$-comprehension occurring in our axiomatization of $\rcas$. Then, by \cref{FI-translation of whole proofs}, we can conclude that every condition forces $\mathsf{WKL}$ and $\delzo$-comprehension as well.

So, let $s$, $\vv$ and $\VV$ be such that $\validname{s}{\tau_2}{\vv}$ and $\forces{s}{\tau_2}{\forall x \bigl(\exists y\,\varphi_1(x, y, \vv, \VV) \Rightarrow}$ ${\neg\exists y\,\varphi_2(x, y, \vv, \VV)\bigr)}$. 
Define the following finite set:
\begin{equation*}
W\defeq \{x<\max(s)\colon \el{y}{\max(s)}
\bigl(\varphi_1(x, y, \vv, \VV) \,\wedge\, \fal{z}{y} \neg\varphi_2(x, z, \vv, \VV)\bigr) \}.    
\end{equation*}
We check that already $s$ forces the following universal sentence:
\begin{equation}\label{conclusion of separation principle}
\forall x\bigl( (\exists y\,\varphi_1(x, y, \vv, \VV)\Rightarrow x\in W) \,\wedge\, (\exists y\,\varphi_2(x, y, \vv, \VV)\Rightarrow x\notin W) \bigr).
\end{equation} 
Let $s'$ and $u$ be such that $\lcond{s'}{s}{\tau_2}$ and $\validname{s'}{\tau_2}{u}$. By \cref{forcing existential} (a), it is enough to show that $s'$ forces each of the conjuncts of the instance of (\ref{conclusion of separation principle}) for $u$. For the first one, we assume, purely for simplicity of notation, that already $\forces{s'}{\tau_2}{\exists y\,\varphi_1(u, y, \vv, \VV)}$. We show that $u\in W$ which, by \cref{t2-delta0-elementarity}, implies that also $\forces{s'}{\tau_2}{u\in W}$.

By \cref{forcing existential} (b), there exist a condition $\lcond{s''}{s'}{\tau_2}$ and a name $w$ such that $\validname{s''}{\tau_2}{w}$ and $\forces{s''}{\tau_2}{\varphi_1(u,w, \vv, \VV)}$. Then, by \cref{t2-delta0-elementarity}, $\varphi_1(u,w, \vv, \VV)$ holds. 
By \cref{conditions_2 - basics} \ref{item:cond_less-max}, it holds that $w<\max(s'')$. On the other hand, by \cref{general monotonicity and dencity} (a), $\forces{s''}{\tau_2}{\exists y\,\varphi_1(u, y, \vv, \VV) \Rightarrow}$ ${\neg\exists y\,\varphi_2(u, y, \vv, \VV)}$. Thus, any $z$ such that $\varphi_2(u, z, \vv, \VV)$ holds cannot be forced by $s''$ to be a valid name, because otherwise $s''$ would also force $\varphi_2(u, z, \vv, \VV)$. In particular, any such $z$ has to be greater than $w$, so $u$ satisfies the condition defining $W$, as required.

For the second conjunct of (\ref{conclusion of separation principle}), we assume as previously that already $\forces{s'}{\tau_2}{\exists y\,\varphi_2(u, y, \vv, \VV)}$, and we show that $u\notin W$, which implies $\forces{s'}{\tau_2}{u\notin W}$ by \cref{t2-delta0-elementarity}. As above, by \cref{forcing existential} (b) and \cref{conditions_2 - basics} \ref{item:cond_less-max}, we get a condition $\lcond{s''}{s'}{\tau_2}$ and a name $z<\max(s'')$ such that 
$\validname{s''}{\tau}{z}$ and  $\forces{s''}{\tau_2}{\varphi_2(u,z, \vv, \VV)}$. Thus, by \cref{t2-delta0-elementarity}, we have that $\varphi_2(u,z, \vv, \VV)$ holds. 
 
Towards a contradiction, suppose that $u\in W$.  
Then, there must be some number $w<\max(s)$ such that $\varphi_1(u, w, \vv, \VV)$ holds but for every $k<w$ we have $\neg\varphi_2(u, k, \vv, \VV)$.  
In particular, we have $w\leq z$ so, by \cref{conditions_2 - basics} \ref{item:cond_smaller-names}, we get $\validname{s''}{\tau_2}{w}$.
But now, by \cref{t2-delta0-elementarity}, we learn that 
$\forces{s''}{\tau_2}{\varphi_1(u, w, \vv, \VV)}$ so, obviously, $\forces{s''}{\tau_2}{\exists y\,\varphi_1(u, y, \vv, \VV)}$. This is a contradiction as, by \cref{general monotonicity and dencity} (a), $\forces{s''}{\tau_2}{\exists y\,\varphi_1(u, y, \vv, \VV) \Rightarrow}$ ${\neg\exists y\,\varphi_2(u, y, \vv, \VV)}$.
Therefore, $u\notin W$. This completes the proof that $\mathsf{WKL}$ and $\delzo$-comprehension are forced.

\smallskip

Finally, we check that $\cac$ is forced. We will use the more intuitive symbol $\preccurlyeq$ for a finite set that codes a partial order.
Let $s$ be such that $s\Vdash_{\tau_2}$`$\preccurlyeq$ is a partial order on $\mathbb{N}$'. We will find a condition $\lcond{s^*}{s}{\tau_2}$ which forces the sentence `There exists an unbounded chain or antichain in $\preccurlyeq$'.
By \cref{conditions_2 - basics} \ref{item:cond_split}, we can split $s$ into a disjoint union $s=s_1\sqcup s_2$ such that $\max(s_1)<\min(s_2)$ and both $s_1, s_2$ are conditions. Now, for every $v\in s_1$ we have $s_2\cap[0, v]=\emptyset$, so $\validname{s_2}{\tau_2}{v}$. By \cref{general monotonicity and dencity} (a), $s_2$ also forces `$\preccurlyeq$ is a partial order on $\mathbb{N}$' so, by \cref{t2-delta0-elementarity}, we learn that $\preccurlyeq$ is a partial order on $s_1\times s_1$.
Let $c$ be the largest number such that $|s_1|\geq c(c-1)$. Then, from \cref{thm:Dilworth} (Dilworth's theorem), which is easily provable in $\idz+\expax$ by elementary finite combinatorics, it follows that there exists $s^*\subseteq s_1$ such that $|s^*|=c$ and $s^*$ is a chain or an antichain in $\preccurlyeq\restriction\!s_1\times s_1$. 
Note that, by \cref{conditions_2 - basics} \ref{item:cond_sqrt}, $s^*$ is also a condition. By \cref{t2-delta0-elementarity}, we obtain $\forces{s^*}{\tau_2}{}$`$s^*$ is a chain or antichain in $\preccurlyeq$'.
Let us stress that `$s^*$' on the right of the previous formula occurs as a second-order name, so clearly $\validname{s^*}{\tau_2}{s^*}$. 

The last thing to show is that $s^*$ forces itself to be an unbounded set, that is, $\forces{s^*}{\tau_2}{\forall x\,\exists y\in s^*(x<y)}$.
So, pick some condition $\lcond{s'}{s^*}{\tau_2}$ and a name $v$ such that $\validname{s'}{\tau_2}{v}$. 
Then $s'\cap[0, v]$ is not a condition. By \cref{conditions_2 - basics} \ref{item:cond_less-max}, there are some $s_i, s_{i+1}\in s'=\{s_1, \dots, s_c\}$ such that $s_i\leq v < s_{i+1}$. 
Note that $|s'\cap[0, v]|=i\in\mathbb{I}$, so clearly $|s'\cap[0, s_{i+1}]|=i+1\in\mathbb{I}$ and thus $\validname{s'}{\tau_2}{s_{i+1}}$. Since $s'\subseteq s^*$, we have $s_{i+1}\in s^*$, and so, by \cref{t2-delta0-elementarity}, we get $\forces{s'}{\tau_2}{(s_{i+1}\in s^*\,\wedge\,v<s_{i+1})}$.
This finishes the proof that $\cac$ is forced by every condition.
\end{proof}


\subsection{Completing the proof}

In this last section we finally prove 
\cref{main theorem_cac}. Our main task is to combine the forcing interpretations $\tau_1$ and $\tau_2$ to obtain a polynomial simulation of $\wkls+\cac$ in $\rcas+\neg\iszo+\lpc$.
Then we will complete the proof by adding the remaining cases of $\rcas+\neg\iszo+\neg\lpc$ and $\rcas+\iszo$.

We need two technical lemmas. The first one says that our two forcing interpretations $\tau_1$ and $\tau_2$ can be composed. The second one states that the composition is $\forall\pizth$-reflecting.  

\begin{lemma}\label{composition -lemma 1}
There exists a polynomial-time algorithm which, given as input a proof $\delta$ of a~sentence $\sigma$ in $\wkls+\cac$, outputs a proof in $\rcas+\neg\iszo+\lpc$ of the sentence: 
\begin{equation*}
\forces{\mathbf{1}}{\tau_1}{(\forall \cond{s}{\tau_2}\,(\forces{s}{\tau_2}{\sigma}))}.
\end{equation*}
\end{lemma}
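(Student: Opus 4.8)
The plan is to build the required proof by composing the two forcing interpretations $\tau_1$ and $\tau_2$, applying \cref{FI-translation of whole proofs} — which converts a proof of a sentence into a proof that every condition forces that sentence — twice in succession. First I would feed the input proof $\delta$ of $\sigma$ in $\wkls+\cac$ to the polynomial-time algorithm of \cref{FI-translation of whole proofs} attached to the polynomial forcing interpretation $\tau_2$ of $\wkls+\cac$ in $\sctheory$ (\cref{tau2-is-polynomial-FI-of-CAC}). Since $\sigma$ is a sentence, the tuple of names occurring in the conclusion of \cref{FI-translation of whole proofs} is empty, so (up to a constant-size rewriting that deletes the vacuous name quantifier and the trivial validity hypothesis) this algorithm produces, in time polynomial in $|\delta|$, a proof in $\sctheory$ of the sentence
\[
  \rho\defeq\forall\cond{s}{\tau_2}\,\bigl(\forces{s}{\tau_2}{\sigma}\bigr).
\]
Observe that $\rho$ is a sentence of the language $\lone\cup\{\mathbb{I}\}$, which is exactly $\mathcal{L}(\sctheory)$.

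Next, since $\tau_1$ is a polynomial forcing interpretation of $\sctheory$ in $\rcas+\neg\iszo+\lpc$ (\cref{tau1-is-polynomial-FI-of-SC}), I would apply \cref{FI-translation of whole proofs} a second time, now to $\tau_1$ and the proof of $\rho$ in $\sctheory$ just obtained. As $\rho$ is again a sentence, this yields, in time polynomial in the size of that proof, a proof in $\rcas+\neg\iszo+\lpc$ of $\forall\cond{s}{\tau_1}\,(\forces{s}{\tau_1}{\rho})$. Finally, recalling that $\tau_1$ has a greatest condition $\mathbf{1}=\nn$ and that $\cond{\mathbf{1}}{\tau_1}$ is provable in $\rcas$ in a fixed number of steps (it unfolds to $\exists^\infty x\,(x\in\nn)$), I would instantiate this universal statement at $s=\mathbf{1}$ and append the fixed-size proof of $\cond{\mathbf{1}}{\tau_1}$, obtaining a proof in $\rcas+\neg\iszo+\lpc$ of $\forces{\mathbf{1}}{\tau_1}{\rho}$, which is literally the target sentence $\forces{\mathbf{1}}{\tau_1}{(\forall\cond{s}{\tau_2}\,(\forces{s}{\tau_2}{\sigma}))}$. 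Each of the three stages runs in time polynomial in the size of its input, and the composition of polynomial-time algorithms is polynomial-time, so the whole construction is polynomial in $|\delta|$.

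I do not expect a substantial obstacle here: this is essentially a gluing lemma, and its entire content is that the two interpretations chain together correctly. The points that need a little care are purely bookkeeping ones — verifying that the intermediate object $\rho$ really lives in $\lone\cup\{\mathbb{I}\}$, which is simultaneously the target language of $\tau_2$ and the source language of $\tau_1$; handling the degenerate case of forcing a sentence, so that the output of \cref{FI-translation of whole proofs} collapses to the clean form $\forall\cond{s}{\tau}\,(\forces{s}{\tau}{\cdot})$ rather than a statement mentioning an empty tuple of names; and observing, as in the discussion following \cref{definition:FI} (which invokes \cref{general monotonicity and dencity}), that passing from $\forall\cond{s}{\tau_1}(\forces{s}{\tau_1}{\rho})$ to $\forces{\mathbf{1}}{\tau_1}{\rho}$ costs only a bounded number of extra proof lines.
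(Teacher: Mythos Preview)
Your proposal is correct and follows essentially the same approach as the paper: apply \cref{FI-translation of whole proofs} first with $\tau_2$ (via \cref{tau2-is-polynomial-FI-of-CAC}) to obtain a proof of $\rho$ in $\sctheory$, then apply it again with $\tau_1$ (via \cref{tau1-is-polynomial-FI-of-SC}) to obtain the desired proof in $\rcas+\neg\iszo+\lpc$. You actually spell out more of the bookkeeping (the empty name tuple, the language of $\rho$, the final instantiation at $\mathbf{1}$) than the paper does, but the argument is the same.
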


\begin{proof}
Let $\delta$ be a proof of a sentence $\sigma$ in $\wkls+\cac$. By \cref{tau2-is-polynomial-FI-of-CAC} and \cref{FI-translation of whole proofs}, there exists a polynomial-time algorithm which, given as input $\delta$, returns a proof $\delta'$ in $\sctheory$ of the sentence $\forall\cond{s}{\tau_2}(\forces{s}{\tau_2}{\sigma})$. Now, by \cref{tau1-is-polynomial-FI-of-SC} and again \cref{FI-translation of whole proofs}, one can apply another polynomial-time algorithm which on input $\delta'$ outputs a proof in $\rcas+\neg\iszo+\lpc$ of the sentence $\forces{\mathbf{1}}{\tau_1}{(\forall \cond{s}{\tau_2}\,(\forces{s}{\tau_2}{\sigma}))}$, as required.
\end{proof}

\begin{lemma}\label{composition - lemma 2}
There exists a polynomial-time algorithm which, given as input an $\ltwo$-sentence $\exists X\exists x\,\varphi(X, x)$, where $\varphi(X, x)$ is $\pizt$, outputs a proof in $\rcas+\neg\iszo+\lpc$ of the sentence: 
\begin{equation}\label{eq:reflection lemma}
\exists X\exists x\,\varphi(X, x)\,\Rightarrow\,\forces{\mathbf{1}}{\tau_1}{\bigl(\exists\cond{s}{\tau_2}\,\exists\nam{V, v}{\tau_2}\,(\forces{s}{\tau_2}{\varphi(V, v)})\bigr)}.    
\end{equation} 
\end{lemma}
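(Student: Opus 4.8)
The plan is to syntactically simulate the ``reflection'' half of the model-theoretic proof of $\forall\pizth$-conservativity sketched at the start of \cref{section:two-step FI}: from a ground-model witness for $\exists X\exists x\,\varphi(X,x)$ one passes to the $\delzo$-ultrapower (the $\tau_1$-generic model), where the witness survives in non-cofinal form, and then one exhibits a single $\tau_2$-condition together with names forcing $\varphi$. First the algorithm rewrites $\varphi(X,x)$ into the shape $\forall y\,\exists z\,\theta(X,x,y,z)$ with $\theta$ a $\delzz$ formula; this is a fixed polynomial-time transformation, and all the proof templates below depend only on $\theta$. Then, reasoning in $\rcas+\neg\iszo+\lpc$, we assume $\forall y\,\exists z\,\theta(B,b,y,z)$ for some $B\in\X$ and $b$, and consider the total, non-decreasing function
\[
 f_\theta(y)=\min\bigl\{\,z>2^y\colon \fale{y'}{y}\,\ele{z'}{z}\,\theta(B,b,y',z')\,\bigr\},
\]
which satisfies $f_\theta(y)>2^y$. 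Using $\delzo$-comprehension we introduce three $\tau_1$-names, i.e.\ total functions in $\X$, each $\delzo$-definable from $B$ and $b$: the constant function $\mathbf{v}(x)=b$; the function $\mathbf{V}(x)=$ the canonical code of $\{w\le x\colon w\in B\}$, which exists since bounded sets are coded; and $\mathbf{s}(x)=$ the code of $\{f_\theta^{(i)}(b)\colon 1\le i\le k\}$ for the largest $k$ such that this finite set exists and is coded by a number $\le x$, so that $\mathbf{s}(x)$ always codes an exponentially sparse set.

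By \cref{forcing existential} and the monotonicity of $\tau_1$-forcing (\cref{general monotonicity and dencity}), together with the fact that $\mathbf{1}$ is the largest $\tau_1$-condition, it suffices to prove in $\rcas+\neg\iszo+\lpc$ that
\[
 \forces{\mathbf{1}}{\tau_1}{\bigl(\cond{\mathbf{s}}{\tau_2}\,\wedge\,\validname{\mathbf{s}}{\tau_2}{\mathbf{v}}\,\wedge\,\forces{\mathbf{s}}{\tau_2}{\varphi(\mathbf{V},\mathbf{v})}\bigr)}.
\]
The main intermediate step would be to isolate, and prove in $\sctheory$ uniformly in $|\theta|$, a convenient sufficient condition for $\tau_2$-forcing $\varphi$: if $\validname{s}{\tau_2}{v}$ and for every $y$ and every $\lcond{s'}{s}{\tau_2}$ with $\validname{s'}{\tau_2}{y}$ there is $z$ with $\validname{s'}{\tau_2}{z}$ and $\theta_{\Ack}(V,v,y,z)$, then $\forces{s}{\tau_2}{\varphi(V,v)}$. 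Its proof merely unwinds the definition of $\tau_2$-forcing of a $\forall\exists$ formula through \cref{forcing existential}, \cref{t2-delta0-elementarity} applied to the $\delzz$ formula $\theta$, \cref{general monotonicity and dencity} and \cref{conditions_2 - basics}; the argument is in the spirit of the $\cac$ part of the proof of \cref{tau2-is-polynomial-FI-of-CAC}, but simpler, as no finite combinatorics is involved.

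It then remains to check, working in $\rcas+\neg\iszo+\lpc$, that $\mathbf{1}$ $\tau_1$-forces each of $\cond{\mathbf{s}}{\tau_2}$, $\validname{\mathbf{s}}{\tau_2}{\mathbf{v}}$, and the hypothesis of the implication for $\mathbf{s},\mathbf{V},\mathbf{v}$ (by \cref{forcing existential} it is enough to treat the conjuncts separately). Exponential sparseness of $\mathbf{s}$ and validity of $\mathbf{v}$ for $\mathbf{s}$ are forced directly by \cref{Delta_zero_Los}, since both are $\delz$ statements about the values of $\mathbf{s}$ and $\mathbf{v}$ that hold at every coordinate (using $f_\theta(b)>2^b\ge b$). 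That $|\mathbf{s}|>\mathbb{I}$ is forced rests on the observation that $\{m\colon f_\theta^{(m)}(b)\text{ exists}\}$ is a $\sigzo$-definable cut, with parameter $B$, hence contains $\cutzo$: so for each $i\in\cutzo$ the value $f_\theta^{(i+1)}(b)$ exists, $\mathbf{s}(x)$ codes more than $i$ elements for all sufficiently large $x$, and this is exactly what clause (vi) of \cref{definition_tau 1} requires in order to force $|\mathbf{s}|>\mathbb{I}$ (this is the point where $\lpc$ enters, since under $\tau_1$ the predicate $\mathbb{I}$ is interpreted by $\cutzo$). Finally, forcing the hypothesis of the implication reduces, via \cref{Delta_zero_Los} applied to $\theta_{\Ack}$ and clause (vi) of \cref{definition_tau 1} for the $\mathbb{I}$-atom, to a statement about the ground model: whenever $y$ is the value of a function eventually dominated by iterates of $f_\theta$ with index in $\cutzo$ and is a valid $\tau_2$-name below $\mathbf{s}$, the least $z$ with $\theta(B,b,y,z)$ --- which is below $f_\theta(y)$, hence below the iterate of $f_\theta$ two steps above $y$ --- is again such a valid name, because $\mathbf{s}$ has at most one element between $y$ and $z$; moreover $\theta_{\Ack}(\mathbf{V},b,y,z)$ agrees with $\theta(B,b,y,z)$ since $\mathbf{V}$ codes $B$ up to every bound occurring in the bounded formula $\theta$, and these bounds stay below the next iterate of $f_\theta$.

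I expect the main obstacle to be precisely this last piece of bookkeeping: one is $\tau_1$-forcing a formula that is itself a $\tau_2$-forcing statement, so one must trace, through the definition of $\tau_1$ and \cref{Delta_zero_Los}, how ``a valid $\tau_2$-name below $\mathbf{s}$'' corresponds to a function whose values are eventually bounded by iterates of $f_\theta$ with index in $\cutzo$, and verify that validity is preserved on passing from such a $y$ to the least witness $z$ for $\theta$. Routing the whole argument through the explicit sufficient condition for $\forces{s}{\tau_2}{\varphi(V,v)}$ --- which collapses the composed-forcing statement into a single $\delz$-absoluteness computation for $\tau_1$ --- is what keeps the verification tractable and the construction polynomial-time.
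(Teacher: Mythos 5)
Your plan --- construct $\tau_1$-names $\mathbf{s}$, $\mathbf{V}$, $\mathbf{v}$ in the ground model and verify directly that $\mathbf{1}$ $\tau_1$-forces the relevant $\tau_2$-forcing statement about them --- breaks down precisely at the step you yourself flag as ``bookkeeping'', and it is not bookkeeping. You propose to dispatch the verification that $\mathbf{1}$ $\tau_1$-forces the hypothesis of your sufficient condition by ``a single $\delz$-absoluteness computation'' via \cref{Delta_zero_Los}. But \cref{Delta_zero_Los} applies only to $\delz$ formulas of $\lone$, whereas the hypothesis of your sufficient condition is a formula of $\lone\cup\{\mathbb{I}\}$: every occurrence of $\cond{s'}{\tau_2}$ and of $\validname{s'}{\tau_2}{y}$ contains an atom of the form $\lvert\cdot\rvert\in\mathbb{I}$, and these occur nested under bounded quantifiers over sub-conditions $s'\subseteq s$ and names $y,z$. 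There is no Łoś-type absoluteness for $\delz(\mathbb{I})$ formulas under $\tau_1$: by clause (vi) of \cref{definition_tau 1}, a single $\mathbb{I}$-atom is forced by $s$ iff $\exists i\in\cutzo\,\forall^\infty x\!\in\! s\,(\ldots)$, and this $\exists i\in\cutzo$ does not commute with the enclosing bounded quantifiers in the way a Łoś-style argument would require. This is the genuine gap.

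The paper's proof avoids the problem by a factorization your outline lacks. \cref{claim_1} shows that $\mathbf{1}$ $\tau_1$-forces an auxiliary sentence with exactly \emph{one} $\mathbb{I}$-atom, namely $\exists y\,\exists u\,\exists z\,\exists c\,\bigl(z=f_\theta^{(c)}(u,y,y)\wedge c>\mathbb{I}\bigr)$: \cref{Delta_zero_Los} handles the pure-$\lone$ part $z=f_\theta^{(c)}(u,y,y)$, while the lone $\mathbb{I}$-atom is handled by the separate direct argument that $\forces{\mathbf{1}}{\tau_1}{d>\mathbb{I}}$ for the diagonal name $d$. \cref{claim_2} then proves, working \emph{inside} $\sctheory$ (not in $\tau_1$-forced form), that this auxiliary sentence implies $\exists\cond{s}{\tau_2}\exists\nam{V,v}{\tau_2}(\forces{s}{\tau_2}{\varphi(V,v)})$; the condition $s$ there is built from the iterates of $f_\theta$, essentially your $\mathbf{s}$, but the entire $\tau_2$-forcing argument takes place within $\sctheory$. \cref{FI-translation of whole proofs} then lifts the $\sctheory$-proof of \cref{claim_2} to the statement that $\mathbf{1}$ $\tau_1$-forces the corresponding implication, and composing with \cref{claim_1} yields \eqref{eq:reflection lemma}. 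The point is that the complex $\tau_2$-forcing statement is never $\tau_1$-forced ``from scratch''; only the simple iteration sentence is, and the rest is an $\sctheory$-argument pushed through $\tau_1$ by \cref{FI-translation of whole proofs}. A secondary issue: you also need the Kleene normal form step (the formula $\psi'$ and term $t$ in \eqref{eq:Kleene}) to control, in polynomial time, how much of $B$ is needed; your naive truncation $\mathbf{V}(x)=\text{code of }B\cap[0,x]$ does not by itself guarantee that the bounds occurring in $\theta_{\Ack}$ stay within that truncation.
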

\begin{proof}
We describe informally how to construct a proof of (\ref{eq:reflection lemma}) for an arbitrary sentence $\exists X\exists x\,\varphi(X, x)$ with $\varphi(X, x)$ being $\pizt$. It should be easily seen that the main part of the proof is constructed from a single template for all such sentences, into which one substitutes a fixed number of times the sentence `$\exists X\exists x\,\varphi(X, x)$' or other expressions obtained from it in polynomial-time. This is complemented by a few auxiliary procedures that can be seen to be polynomial-time, which we will comment on in appropriate places below.

So, let $\exists X\,\exists x\,\varphi(X, x)$ be an $\ltwo$-sentence, where $\varphi(X, x)$ is $\pizt$. 
For technical reasons that will become clear below, we replace the subformula $\varphi(X, x)\defeq\forall y\,\exists z\,\theta(X, x, y, z)$ with a $\pizt$ formula $\varphi^*(X, x)\defeq\forall y\,\exists w\,\ele{z}{w}\,\theta^*(X, x, y, z, w)$, where all quantifiers in $\theta^*$ are bounded by $w$. The algorithm building the proof of \eqref{eq:reflection lemma} starts its work with the construction of a proof in $\rcas$ of the equivalence:
\begin{equation}\label{eq:delta0-special-form}
\forall X\,\forall x\,\bigl(\varphi(X, x)\Leftrightarrow\varphi^*(X, x)\bigr)
\end{equation}
by recursion on subformulas of $\theta(X, x, y, z)$. We skip the details of this procedure, but it should be easily seen that it can be carried out in time polynomial in $|\theta(X, x,y, z)|$. To simplify notation, from now on we will assume that already $\varphi(X, x)$ is in the above form where each quantifier in the $\delzz$ matrix is bounded by a variable.

Next, the algorithm writes the following definition of a (possibly partial) function of the variable $y$ with parameters $X$ and $x$:
\begin{equation}\label{eq:function theta}
    f_{\theta}(X, x, y) = \min \{z\!>\!2^y\colon\, \fale{y'}{y}\, \ele{z'}{z}\,\theta(X, x, y', z') \}.
\end{equation}
Note that (\ref{eq:function theta}), as a definition of the graph of $f_\theta$, is a $\delz(X)$ definition. We emphasize that the first two arguments of the function $f_\theta$ are always some fixed parameters, so by iterations of $f_\theta$ on some number $y$ we mean the values $f_\theta(X, x, y)$, $f_\theta(X, x, f_\theta(X, x, y))$, and so on.

Then, the algorithm finds the Ackermann translations (as described before \cref{t2-delta0-elementarity}) of the formula $\theta$ and of the definition of $f_\theta$. This will be needed for the construction of proofs in the first-order theory $\sctheory$. The translations are readily constructed in polynomial time. 
Let us note that the first-order version of $f_\theta$ takes as its first parameter, instead of a set $X$, some number $u$ seen as a code for a (finite) set. To enhance readability, we will slightly abuse notation and denote these translations also by $\theta$ and $f_\theta$, respectively, which should not lead to any confusion.

The main part of the proof of  \eqref{eq:reflection lemma} consists of proving two claims. The first one guarantees that, provably in $\rcas+\neg\iszo+\lpc$, if the sentence $\exists X\,\exists x\,\forall y\,\exists z\,\theta(X, x, y, z)$ holds, then $\mathbf{1}$ forces that there exists a number $y$ such that for some number $u$, the function $f_{\theta}(u, y, \cdot)$ can be iterated on $y$ more than $\mathbb{I}$-many times (note that $y$ occurs also as a parameter).
Then, by the second claim, it will follow that the existence of these iterations allows to find a condition $s$ and names $V, v$ of $\tau_2$ such that $s$ forces $\forall y\,\exists z\,\theta(V, v, y, z)$, provably in $\sctheory$.

\begin{claim}\label{claim_1} $\rcas+\neg\iszo+\lpc$ proves the following sentence:
\begin{equation}\label{1-forsuje-mozna iterowac >I razy}
\exists X\exists x\,\forall y\,\exists z\,\theta(X, x, y, z)\Rightarrow\forces{\mathbf{1}}{\tau_1}{\exists y\,\exists u\,\exists z\,\exists c\,\bigl( z=f_{\theta}^{(c)}(u, y, y)\,\wedge\, c>\mathbb{I} \bigr)},
\end{equation}
and the proof can be constructed in time polynomial in $|\theta|$.
\end{claim}
\begin{claimproof}
We will describe informally a proof in $\rcas+\neg\iszo+\lpc$ of (\ref{1-forsuje-mozna iterowac >I razy}). Its main part can be obtained from a single template into which one has to substitute $\theta$ or other expressions or proofs obtained from $\theta$ in polynomial time. The only part of the proof which is not immediately seen to be constructible in time polynomial in $|\theta|$ is the proof of statement (\ref{eq:Kleene}) which will be discussed below.

Assume the antecedent of the implication (\ref{1-forsuje-mozna iterowac >I razy}) and pick $B$ and $b$ such that $\forall y\,\exists z\,\theta(B, b, y, z)$ holds. Fix an unbounded set $A=\{a_i\}_{i\in\cutzo}$ whose existence is guaranteed by the axiom $\lpc$ and \cref{cofinal set}. 
Recall from the remarks after \cref{cofinal set} that for each number $x$ there is a unique $i\in\cutzo$ such that $x\in (a_{i-1}, a_i]$, and that the statement `$x\in (a_{i-1}, a_i]$' is expressed by a $\delo(A)$ formula of $x$ and $i$, the shape of which does not depend on the specific set $A$.

To witness the existential quantifier `$\exists u$' in the consequent of the implication (\ref{1-forsuje-mozna iterowac >I razy}) we will define a total function $\hat{B}$ that maps each number $x$ to the initial segment of the set $B$ needed to compute the first $i$ iterations of the function $f_{\theta}$ on $b$, where $i$ is such that $x\in (a_{i-1}, a_i]$.
Before we give a definition of $\hat{B}$ let us make two observations. 

Firstly, for every $i\in\cutzo$, the value $f_{\theta}^{(i)}(B, b, b)$ exists. Otherwise, the set $\{x\in\nn\colon \exists z\big(z=f_{\theta}^{(x)}(B, b, b)\bigr)\}$ would be a $\sigzo$-definable cut properly contained in $\cutzo$, contradicting the assumption that $\lpc$ holds.

Secondly, let $\psi(B, A, r, b, x, i)$ be a $\delzz$ formula expressing that $x\in (a_{i-1}, a_i]$ and $r$ is the sequence of the first $i$ iterations of $f_{\theta}(B, b, \cdot)$ starting at $b$:
\begin{equation}\label{eq:psi_r_sequence}
\begin{split} 
    \psi(B, A, r, b, x, i)\defeq x\in(a_{i-1}, a_i]\,\wedge\, |r|=i+1 
     \,\wedge\,(r)_0=b \\ \wedge\,\,\fal{j}{i}((r)_{j+1}=f_{\theta}(B, b, (r)_{j})).
\end{split}
\end{equation}
By Kleene's normal form theorem for $\sigzo$ formulas (cf. \cite[Lemma 7.13]{Hirschfeldt_Book}), 
there is a $\delzz$ formula $\psi'$, a term $t(r, x_1, x_2, x_3)$ and a proof in $\rcas$ of the following sentence:
\begin{multline}\label{eq:Kleene}
\forall X, Y\,\forall r\,\forall x_1, x_2, x_3\bigl( \,\psi(X, Y, r, x_1, x_2, x_3)\\
\Leftrightarrow\,\fage{u}{t(r, x_1, x_2, x_3)}\psi'(\restr{X}{u}, Y, r, x_1, x_2, x_3)\bigr). 
\end{multline}
In our case, because of the assumption that all quantifiers in $\theta$ are bounded by a variable, we can make the same assumption about $\psi$, and thus $t$ can be found in polynomial time, say as $(r+x_1+x_2+x_3)^{|\psi|}$ (note that such a term has size polynomial in $|\psi|$). Then one can construct in polynomial time a routine proof that $t$ majorizes any value needed to evaluate $\psi$. Let us note that for a general $\delz$ formula $\psi$ it would not be possible to build $t$ in polynomial time due to the possibility of simulating exponentiation by repeated squaring in quantifier bounds.

Now we can define the following $\delo(A, B)$-functions, the first of which is constant:
\begin{align*}
   \hat{b}(x) &= b; \\
   \begin{split}
     \hat{B}(x) &= \text{the code for } \restr{B}{u},\\
     &  \text{\,\,\,\,\,\,\,\,where } u=t(r, b, x, i), \text{ with } r \text{ unique satisfying (\ref{eq:psi_r_sequence}) and } t \text{ as in (\ref{eq:Kleene})}.
     \end{split}
\end{align*}

We show that $\hat{B}$ and $\hat{b}$ witness the outer existential quantifiers `$\exists y\,\exists u$' in (\ref{1-forsuje-mozna iterowac >I razy}). 
As in the proof of \cref{tau1-is-polynomial-FI-of-SC}, let the function $d$ be defined by $d(x)=i$, where $x\in(a_{i-1}, a_{i}]$.
We define a total $\delo(A, B)$-function $w$ as follows:

\begin{equation}\label{eq:function w}
w(x) =  f_{\theta}^{d(x)}(\hat{B}(x), \hat{b}(x), \hat{b}(x)).
\end{equation}
The formula $z=f_\theta^{(v)}(u, y, y)$ is $\delzz$ and (\ref{eq:function w}) holds for every number $x$. 
Thus, we can apply \cref{Delta_zero_Los} to get a proof that $\mathbf{1}$ forces 
$w = f_{\theta}^{(d)}(\hat{B}, \hat{b}, \hat{b})$. Finally, we construct a proof of the fact that $\forces{\mathbf{1}}{\tau_1}{d>\mathbb{I}}$ in the same way as we did when proving \cref{tau1-is-polynomial-FI-of-SC}, and thus we obtain a proof of (\ref{1-forsuje-mozna iterowac >I razy}).
\end{claimproof}

\begin{claim}\label{claim_2}
$\sctheory$ proves the following sentence:    
\begin{multline}\label{eq:claim_2}
\exists y\,\exists u\,\exists z\,\exists c\,\bigl( z=f_{\theta}^{(c)}(u, y, y)\,\wedge\, c>\mathbb{I} \bigr)\,\Rightarrow\, \\ \exists \cond{s}{\tau_2}\,\exists\nam{v, V}{\tau_2}\,\bigl(\forces{s}{\tau_2}{\forall y\,\exists z\,\theta(V, v, y, z)}\bigr),
\end{multline}
and the proof can be constructed in time polynomial in $|\theta|$.
\end{claim}
\begin{claimproof}
We describe informally a proof of \eqref{eq:claim_2} in $\sctheory$. As in the case of the first claim, it should be easily seen that the proof follows one and the same template for all $\theta$, into which one has to substitute expressions and auxiliary proofs that can be obtained from $\theta$ in polynomial time.

Assume that the antecedent of the implication holds, and let $b$, $B$ and $c>\mathbb{I}$ be such that the value $z=f_{\theta}^{(c)}(B, b, b)$ exists (note that here $B$ is a number seen as a code for a finite set). Define the finite set:
\begin{equation}\label{eq:finite set-iterated f_theta}
s = \{x\in\nn\colon \el{k}{c} \bigl( x=f_{\theta}^{(k)}(B, b, b) \bigr) \}.    
\end{equation}

By (the first-order translation of) the definition of the function $f_{\theta}$ (\ref{eq:function theta}), it follows that $s$ is exponentially sparse. By our assumption, $s$ has $c$ elements and $c>\mathbb{I}$, so $s$ is a condition of $\tau_2$.  Since $b$ is the smallest element of $s$ (for $k=0$), we have that $|s\cap[0, b]|=1\in\mathbb{I}$, so
$\validname{s}{\tau_2}{b}$. Clearly, $B$ is a valid name for a second-order object.

We show that $\forces{s}{\tau_2}{\forall y\,\exists z\,\theta(B, b, y, z)}$. 
Take any condition $\{s_1< \dots <s_k\}=\lcond{s'}{s}{\tau_2}$ and a name $v$ such that $\validname{s'}{\tau_2}{v}$.
It follows that the set $s'\cap [0, v]$ 
is not a condition, so it has $j$ elements $s_1< \dots <s_{j}$ for some $j\in\mathbb{I}$. Since $\mathbb{I}$ is a cut, it also holds that $j+2\in\mathbb{I}<k$. Thus, we can take the next two elements $s_{j+1}, s_{j+2}\in s'$ and, clearly, the set $s'\cap[0, s_{j+2}]$ is not a condition. 
Hence, $s'$ forces both $s_{j+1}$ and $s_{j+2}$ to be valid names.
By the definitions of $s$ and the function $f_{\theta}$, we learn that $\fale{y}{s_{j+1}} \ele{z}{s_{j+2}}\,\theta(B, b, y, z)$.
Since $v\!<\!s_{j+1}$, we obtain that $\ele{z}{s_{j+2}}\,\theta(B, b, v, z)$. 
The last formula is $\Delta_0$, so by \cref{t2-delta0-elementarity} we get
$\forces{s'}{\tau_2}{\!\exists z\,\theta(B, b, v, z)}$. By the definition of forcing a universal formula (\ref{item:ftr/forall}), this shows that $\forces{s}{\tau_2}{\forall y\,\exists z\,\theta(B, b, y, z)}$.
\end{claimproof}

The algorithm finishes constructing the proof of (\ref{eq:reflection lemma}) as follows.
It simulates the algorithm provided by \cref{FI-translation of whole proofs} to transform the proof of (\ref{eq:claim_2}) guaranteed by Claim~$\ref{claim_2}$ into a proof in $\rcas+\neg\iszo+\lpc$ of the sentence: 
\begin{multline}\label{eq:jest iteracja-jest warunek}
\forces{\mathbf{1}}{\tau_1}{\Bigl(\exists u\, \exists y\,\exists z\,\exists c\,\bigl( z=f_{\theta}^{(c)}(u, y, y)\wedge\, c>\mathbb{I} \bigr)\,\Rightarrow}\\ \exists\cond{s}{\tau_2}\,\exists\nam{V, v}{\tau_2}\,\bigl(\forces{s}{\tau_2}{\forall y\,\exists z\,\theta(V, v, y, z)}\bigr)\Bigr).
\end{multline}
Then, in a fixed number of steps it derives \eqref{eq:reflection lemma} from (\ref{1-forsuje-mozna iterowac >I razy}) and (\ref{eq:jest iteracja-jest warunek}).  
\end{proof}

\begin{lemma}\label{cac_poly-simulation under lpc}
$\wkls+\cac$ is polynomially simulated by $\rcas+\neg\iszo+\lpc$ with respect to $\forall\pizth$ sentences.     
\end{lemma}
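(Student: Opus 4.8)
The plan is to reproduce the proof of \cref{thm:Poly-FI-give-poly-simulation} for the ``composed'' interpretation of $\wkls+\cac$ in $\rcas+\neg\iszo+\lpc$, using \cref{composition -lemma 1} as the analogue of \cref{FI-translation of whole proofs} (theorems of $\wkls+\cac$ are forced through both $\tau_2$ and $\tau_1$) and \cref{composition - lemma 2}, together with the fact that $\tau_1$ never forces a contradiction, as the analogue of $\forall\pizth$-reflection. So fix a proof $\delta$ of a $\forall\pizth$ sentence $\sigma$ in $\wkls+\cac$; we must construct in polynomial time a proof of $\sigma$ in $\rcas+\neg\iszo+\lpc$.

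First I would bring $\neg\sigma$ into the form required by \cref{composition - lemma 2}. Negating $\sigma$ and coding the leading first-order and second-order existential quantifier blocks of $\neg\sigma$ into single variables (using $\delzz$-definable pairing, which is available in $\rcas$) yields, in polynomial time, an $\ltwo$-sentence $\exists X\,\exists x\,\varphi(X,x)$ with $\varphi\in\pizt$ together with a polynomial-size $\rcas$-proof of $\sigma\Leftrightarrow\neg\exists X\,\exists x\,\varphi(X,x)$. Appending this proof to $\delta$ we may and do assume that $\sigma$ is literally $\neg\exists X\,\exists x\,\varphi(X,x)$. Write $A$ for the $(\lone\cup\{\mathbb{I}\})$-sentence $\forall\cond{s}{\tau_2}\,(\forces{s}{\tau_2}{\sigma})$ and $B$ for $\exists\cond{s}{\tau_2}\,\exists\nam{V,v}{\tau_2}\,(\forces{s}{\tau_2}{\varphi(V,v)})$. \cref{composition -lemma 1} applied to $\delta$ produces a proof in $\rcas+\neg\iszo+\lpc$ of $\forces{\mathbf{1}}{\tau_1}{A}$, and \cref{composition - lemma 2} applied to $\exists X\,\exists x\,\varphi(X,x)$ produces a proof in $\rcas+\neg\iszo+\lpc$ of $\exists X\,\exists x\,\varphi(X,x)\Rightarrow\forces{\mathbf{1}}{\tau_1}{B}$.

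The remaining ingredient is a polynomial-time-constructible proof in $\rcas+\neg\iszo+\lpc$ of $\forces{\mathbf{1}}{\tau_1}{(A\Rightarrow\neg B)}$. For this I would first argue that $\sctheory$ proves $A\Rightarrow\neg B$: reasoning in $\sctheory$, suppose $A$ and $B$; pick a condition $s_0$ and names $V_0,v_0$ witnessing $B$, so $\forces{s_0}{\tau_2}{\varphi(V_0,v_0)}$, and apply \cref{forcing existential} (b) twice, together with monotonicity of $\tau_2$-forcing (\cref{general monotonicity and dencity} (a)), to obtain $\forces{s_0}{\tau_2}{\exists X\,\exists x\,\varphi(X,x)}$; but instantiating $A$ at $s_0$ gives $\forces{s_0}{\tau_2}{\neg\exists X\,\exists x\,\varphi(X,x)}$, which contradicts \cref{no-contradiction-forced} for $\tau_2$ applied to the closed formula $\exists X\,\exists x\,\varphi(X,x)$. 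Since $\tau_2$ is a polynomial forcing interpretation of $\wkls+\cac$ in $\sctheory$ (\cref{tau2-is-polynomial-FI-of-CAC}), this argument is uniform in $\varphi$ and runs in polynomial time in $|\varphi|$. Pushing the resulting $\sctheory$-proof of $A\Rightarrow\neg B$ through $\tau_1$ via \cref{FI-translation of whole proofs} (here using \cref{tau1-is-polynomial-FI-of-SC}) gives the desired proof of $\forces{\mathbf{1}}{\tau_1}{(A\Rightarrow\neg B)}$.

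Finally I would assemble the proof of $\sigma$ in $\rcas+\neg\iszo+\lpc$. Assume $\exists X\,\exists x\,\varphi(X,x)$; then $\forces{\mathbf{1}}{\tau_1}{B}$ by \cref{composition - lemma 2}, $\forces{\mathbf{1}}{\tau_1}{(A\Rightarrow\neg B)}$ by the previous paragraph, and $\forces{\mathbf{1}}{\tau_1}{A}$ by \cref{composition -lemma 1}. Unfolding the definition of forcing an implication (\ref{item:ftr/then}) and using \cref{general monotonicity and dencity} turns the first and third of these into $\forces{\mathbf{1}}{\tau_1}{\neg B}$, which together with $\forces{\mathbf{1}}{\tau_1}{B}$ contradicts \cref{no-contradiction-forced} for $\tau_1$ (at the condition $\mathbf{1}$). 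Hence $\neg\exists X\,\exists x\,\varphi(X,x)$, i.e. $\sigma$, is provable in $\rcas+\neg\iszo+\lpc$; undoing the normalisation of the second paragraph recovers a proof of the original $\sigma$. Every step of this assembly is polynomial-time, so the construction witnesses the claimed polynomial simulation. I do not expect a genuine obstacle here: the argument is a bookkeeping exercise gluing \cref{composition -lemma 1} and \cref{composition - lemma 2} in the pattern of \cref{thm:Poly-FI-give-poly-simulation}, and the only points needing care are the polynomial-time normalisation of $\neg\sigma$ into the shape $\exists X\,\exists x\,\varphi(X,x)$ with $\varphi\in\pizt$ and the check that the $\sctheory$-proof of $A\Rightarrow\neg B$ is genuinely uniform in $\varphi$.
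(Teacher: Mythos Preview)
Your proposal is correct and follows essentially the same approach as the paper: combine \cref{composition -lemma 1} and \cref{composition - lemma 2} with the no-contradiction lemma for $\tau_2$ pushed through $\tau_1$, then read off $\sigma$ by contradiction at the $\tau_1$ level. The only cosmetic difference is that the paper keeps the pieces separate---deriving $\forces{\mathbf{1}}{\tau_1}{(\forall s\,\forall V,v\,(\validname{s}{\tau_2}{v}\Rightarrow\forces{s}{\tau_2}{\varphi(V,v)}))}$ from \cref{composition -lemma 1} and $\forces{\mathbf{1}}{\tau_1}{(\forall s\,\forall V,v\,\neg(\forces{s}{\tau_2}{\varphi}\wedge\forces{s}{\tau_2}{\neg\varphi}))}$ from \cref{no-contradiction-forced} via \cref{FI-translation of whole proofs}---and combines them with \cref{composition - lemma 2} directly in $\rcas+\neg\iszo+\lpc$, whereas you first bundle these two into a single $\sctheory$-proof of $A\Rightarrow\neg B$ before pushing through $\tau_1$; both routes are straightforwardly polynomial.
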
 
\begin{proof}
Let $\delta$ be a proof in $\wkls+\cac$ of a sentence $\forall X\forall x\,\varphi(X, x)$, where $\varphi(X, x)$ is $\sigzt$.
To avoid complicating the proof, we will ignore the distinction between $\neg\varphi$ and the $\pizt$ formula equivalent to it (by a polynomial-time constructible proof).

By \cref{composition -lemma 1} and \cref{composition - lemma 2}, there exist algorithms that output proofs in $\rcas+\neg\iszo+\lpc$ of the sentences:
\begin{equation}\label{eq:final_1}
\forces{\mathbf{1}}{\tau_1}{\bigl(\forall\cond{s}{\tau_2}\,(\forces{s}{\tau_2}{\forall X\forall x\,\varphi(X, x)})\bigr)}
\end{equation}
and 
\begin{multline}\label{eq:final_2}
\exists X\exists x\,\neg\varphi(X, x)\Rightarrow\\ \forces{\mathbf{1}}{\tau_1}{(\exists\cond{s}{\tau_2}\,\exists\nam{V, v}{\tau_2}\,(\validname{s}{\tau_2}{v}\wedge\,\forces{s}{\tau_2}{\neg\varphi(V, v)}))}    
\end{multline} 
in time polynomial in $|\delta|$ and $|\varphi(X, x)|$, respectively.
Note that one can construct in polynomial time (cf. a remark in the third paragraph after \cref{definition:FI}) a proof that infers from \eqref{eq:final_1} the sentence: 
\begin{equation}\label{eq:final_4}
\forces{\mathbf{1}}{\tau_1}{\bigl(\forall\cond{s}{\tau_2}\,\forall\nam{V, v}{\tau_2}\,\bigl(\validname{s}{\tau_2}{v}\,\Rightarrow\,\forces{s}{\tau_2}{\varphi(V, v)}\bigr)\bigr)}.
\end{equation}

On the other hand, by combining \cref{no-contradiction-forced} and \cref{FI-translation of whole proofs}, there is a polynomial-time algorithm which, given as input the formula $\varphi(X, x)$, outputs a proof in $\rcas+\neg\iszo+\lpc$ of the sentence:
\begin{equation}\label{eq:final_3}
\forces{\mathbf{1}}{\tau_1}{\forall\cond{s}{\tau_2}\,\forall\nam{V, v}{\tau_2}\neg\bigl(\forces{s}{\tau_2}{\varphi(V, v)}\,\wedge\,\forces{s}{\tau_2}{\neg\varphi(V, v)}\bigr)}.
\end{equation}

Now, in polynomial time one can obtain
a proof in $\rcas+\neg\iszo+\lpc$ of the sentence $\forall X\forall x\,\varphi(X, x)$, by 
combining \eqref{eq:final_2}, \eqref{eq:final_4} and \eqref{eq:final_3} in a fixed number of inferences using the algorithms from \cref{section:FI general}.
\end{proof}

The case when $\lpc$ fails is much simpler: as we mentioned before, we have an almost trivial interpretation (in the usual non-forcing sense) of $\sctheory$ in $\rcas+\neg\iszo+\neg\lpc$. Therefore, we can define a forcing interpretation of $\wkls+\cac$ directly in $\rcas+\neg\iszo+\neg\lpc$ and prove the reflection property as stated in \cref{def:reflection}.  

\begin{lemma}\label{cac_poly-simulation under not-lpc}
$\wkls+\cac$ is polynomially simulated by $\rcas+\neg\iszo+\neg\lpc$ with respect to $\forall\pizth$ sentences.     
\end{lemma}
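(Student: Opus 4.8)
The plan is to prove \cref{cac_poly-simulation under not-lpc} by building a single polynomial forcing interpretation $\tau$ of $\wkls+\cac$ directly in $\rcas+\neg\iszo+\neg\lpc$, showing it is polynomially $\forall\pizth$-reflecting, and invoking \cref{thm:Poly-FI-give-poly-simulation}. First I would make precise the trivial non-forcing interpretation $\iota$ of $\sctheory$ in $\rcas+\neg\iszo+\neg\lpc$ announced just before the lemma: $\iota$ is the identity on atomic $\lone$-formulas and sends `$t(\xx)\in\mathbb{I}$' to `$t(\xx)\in\cutzo$'. The axioms of $\idz+\expax$ translate to themselves, so only $\sca$ needs checking: $\neg\iszo$ makes $\cutzo$ a proper cut, \cref{proposition:cutzo closed under multiplication} makes it closed under multiplication, and for every number $x$ the cut $J_x\defeq\{i\in\nn\colon\exists y\,(y=2_i(x))\}$ is $\sigzo$-definable while $\cutzo$ is not (that is exactly $\neg\lpc$), so $\cutzo\subsetneq J_x$ and there is $c>\cutzo$ with $2_c(x)$ defined; since $\iota$ is fixed, its formula- and proof-translations run in linear time. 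Then I would let $\tau$ be the forcing translation from $\ltwo$ to $\ltwo$ obtained by applying $\iota$ to every $(\lone\cup\{\mathbb{I}\})$-formula of $\tau_2$ in \eqref{eq:forcing translation formulas} and to the forcing clauses, so $\cond{s}{\tau}$ is $\iota(\cond{s}{\tau_2})$, $\validname{s}{\tau}{v}$ is $\iota(\validname{s}{\tau_2}{v})$, and so on; since $\iota$ preserves provability and commutes with the logical structure of formulas (hence with the inductive clauses (FT6)--(FT8)), conditions (FI1)--(FI14) for $\tau$ follow by applying $\iota$ to the proofs of the corresponding conditions for $\tau_2$ furnished by \cref{tau-2 is a FI} and \cref{tau2-is-polynomial-FI-of-CAC}, and polynomiality is preserved because $\iota$ adds only linear overhead. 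This is the composition principle underlying \cref{composition -lemma 1}, with the ordinary interpretation $\iota$ in the role of the forcing interpretation $\tau_1$; in particular $\iota(\forces{s}{\tau_2}{\varphi})$ is $\forces{s}{\tau}{\varphi}$ for every $\ltwo$-formula $\varphi$, so $\tau$ is a polynomial forcing interpretation of $\wkls+\cac$ in $\rcas+\neg\iszo+\neg\lpc$.

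The core of the argument is an $\iota$-analogue of \cref{composition - lemma 2}: a polynomial-time algorithm producing, from any $\ltwo$-sentence $\exists X\exists x\,\varphi(X,x)$ with $\varphi$ being $\pizt$, a proof in $\rcas+\neg\iszo+\neg\lpc$ of
\[
\exists X\exists x\,\varphi(X,x)\ \Rightarrow\ \exists\cond{s}{\tau}\,\exists\nam{V,v}{\tau}\,\bigl(\forces{s}{\tau}{\varphi(V,v)}\bigr).
\]
After the normalization from the proof of \cref{composition - lemma 2} we may assume $\varphi=\forall y\,\exists z\,\theta(X,x,y,z)$ with $\theta$ a $\delzz$ formula whose quantifiers are bounded by variables. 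Applying the linear-time interpretation $\iota$ to the proof furnished by Claim~\ref{claim_2} (which is carried out in $\sctheory$) yields a proof in $\rcas+\neg\iszo+\neg\lpc$ of
\begin{multline*}
\exists y\,\exists u\,\exists z\,\exists c\,\bigl(z=f_\theta^{(c)}(u,y,y)\wedge c>\cutzo\bigr)\ \Rightarrow\\ \exists\cond{s}{\tau}\,\exists\nam{v,V}{\tau}\,\bigl(\forces{s}{\tau}{\forall y\,\exists z\,\theta(V,v,y,z)}\bigr),
\end{multline*}
so it only remains to supply its antecedent -- and here, unlike in the $\lpc$ case, no generic ultrapower is needed. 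Fixing $B$ and $b$ with $\forall y\,\exists z\,\theta(B,b,y,z)$, the function $f_\theta(B,b,\cdot)$ is total, so the set $C$ of those $k$ for which the sequence of the first $k$ iterates of $f_\theta(B,b,\cdot)$ started at $b$ is coded is a $\sigzo$-definable cut; hence $\cutzo\subseteq C$, and since $\neg\lpc$ says $\cutzo$ is not $\sigzo$-definable we get $C\ne\cutzo$, so there is $c\in C$ with $c>\cutzo$. Letting $Z$ be the $c$-th iterate of $f_\theta(B,b,\cdot)$ (which exists, as $c\in C$) and $u$ a code for $\restr{B}{Z}$, one checks that $f_\theta^{(c)}(u,b,b)=Z$ in the first-order sense, since every bit of $B$ queried while computing these $c$ iterates lies below $Z$; this supplies the witnesses for the antecedent.

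With this $\iota$-analogue of \cref{composition - lemma 2}, reflection follows as in the proof of \cref{cac_poly-simulation under lpc}, only more directly as there is no $\tau_1$ layer. Let $\gamma=\forall X\forall x\,\varphi(X,x)$ be a $\forall\pizth$ sentence, with $\varphi$ being $\sigzt$ and $\neg\varphi$ identified with the equivalent $\pizt$ formula. Reasoning in $\rcas+\neg\iszo+\neg\lpc$: assume $\forall\cond{s}{\tau}(\forces{s}{\tau}{\gamma})$, and suppose towards a contradiction that $\exists X\exists x\,\neg\varphi(X,x)$. By the $\iota$-analogue of \cref{composition - lemma 2} applied to $\exists X\exists x\,\neg\varphi(X,x)$, there are a condition $s$ and names $V,v$ with $\forces{s}{\tau}{\neg\varphi(V,v)}$, hence by (FT4) also $\validname{s}{\tau}{V}$ and $\validname{s}{\tau}{v}$; but $\forces{s}{\tau}{\gamma}$ then gives, after unfolding (FT8) twice and using \cref{general monotonicity and dencity}, a condition $\lcond{s'}{s}{\tau}$ with $\forces{s'}{\tau}{\varphi(V,v)}$, while $\forces{s'}{\tau}{\neg\varphi(V,v)}$ by monotonicity -- contradicting \cref{no-contradiction-forced} for $\tau$. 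This produces a polynomial-time-constructible proof of $\forall\cond{s}{\tau}(\forces{s}{\tau}{\gamma})\Rightarrow\gamma$, so $\tau$ is polynomially $\forall\pizth$-reflecting, and \cref{thm:Poly-FI-give-poly-simulation} gives the statement. I expect the main obstacle to be the $\iota$-analogue of \cref{composition - lemma 2}, and within it the point that $\neg\lpc$ forces the $\sigzo$-definable cut $C$ to be strictly larger than $\cutzo$; but this is considerably lighter than the $\lpc$ case, needing neither the generic ultrapower $\tau_1$ nor Kleene's normal form theorem.
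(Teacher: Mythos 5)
Your proof is correct and takes essentially the same approach as the paper: the paper defines a forcing interpretation $\tau_3$ directly by substituting the $\forall\pizth$ definition of $\cutzo$ for $\mathbb{I}$ throughout $\tau_2$, which is exactly your $\tau=\iota\circ\tau_2$, and establishes reflection by the same $\neg\lpc$ observation, namely that $\{k\in\nn\colon\exists z\,(z=f_\theta^{(k)}(B,b,b))\}$ is a $\sigzo$-definable cut and hence, by $\neg\lpc$, a proper superset of $\cutzo$, so a suitable $c>\cutzo$ exists. One small inaccuracy in your write-up: the bits of $B$ queried while computing the $c$ iterates are bounded not by $Z$ itself but by a polynomial in $b$ and $Z$ determined by the bounding terms of $\theta$ (all of whose quantifiers are variable-bounded), so $u$ should code a slightly larger initial segment of $B$ than $\restr{B}{Z}$; the paper likewise leaves this implicit by just saying ``the initial segment of $B$ needed to compute $f_\theta^{(c)}(B,b,b)$''.
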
 
\begin{proof}
We appeal to \cref{thm:Poly-FI-give-poly-simulation}.
To define a polynomial forcing interpretation $\tau_3$ of $\wkls+\cac$ in $\rcas+\neg\iszo+\neg\lpc$ one repeats the definition of $\tau_2$ in \cref{section:generic cut} replacing each occurrence of the predicate $\mathbb{I}$ with the definition of the cut $\cutzo$. The analogues of Lemmas \ref{conditions_2 - basics}-\ref{tau2-is-polynomial-FI-of-CAC} hold and are proved in the same way because $\cutzo$ satisfies the properties expressed by the axiom $\sca$.

To see that $\tau_3$ is polynomially $\forall\pizth$-reflecting we only need to adapt (in fact, simplify) the proof of \cref{composition - lemma 2}. 
So, let us work in $\rcas+\neg\iszo+\neg\lpc$ and assume that a $\exists\sigzth$ sentence $\exists X\,\exists x\,\forall y\,\exists z\,\theta(X, x, y, z)$ holds, where $\theta$ is $\delzz$. Take a set $B$ and a number $b$ that witness the existential quantifiers `$\exists X\,\exists x$'. Define a $\delz(B)$-function $f_\theta$ and a finite set $s$ just as in \eqref{eq:function theta} and \eqref{eq:finite set-iterated f_theta}, respectively. Note that with the current assumptions the definition of $s$ makes sense as well: the value $f_{\theta}^{(c)}(B, b, b)$ exists for some number $c>\cutzo$ because the set $\{x\in\nn\colon \exists z\big(z=f_{\theta}^{(x)}(B, b, b)\bigr)\}$ is a $\sigzo$-definable cut which, by $\neg\lpc$, is a proper superset of $\cutzo$. Now one can check like in the proof of \cref{composition - lemma 2} that the condition $s$ forces $\forall y\,\exists z\,\theta(V, v, y, z)$, for the names $V\defeq$ \textit{the initial segment of} $B$ \textit{needed to compute} $f_{\theta}^{(c)}(B, b, b)$ and $v\defeq b$. This clearly implies (the contraposition of) the instance of the reflection property \eqref{eq:reflection} for the $\forall\pizth$ sentence $\forall X\,\forall x\,\exists y\,\forall z\,\neg\theta(X, x, y, z)$.
\end{proof}
Finally, we are ready to prove the main theorem of the paper.

\begin{proof}[Proof of \cref{main theorem_cac}]
Let $\delta$ be a proof in $\wkls+\cac$ of a $\forall\pizth$ sentence $\sigma$. By \cref{cac_poly-simulation under lpc}, \cref{cac_poly-simulation under not-lpc} and \cref{rt-polynomial simulation}, in time polynomial in $|\delta|$ one can find proofs of $\sigma$ in, respectively, $\rcas+\neg\iszo+\lpc$, $\rcas+\neg\iszo+\neg\lpc$ and $\rca$. Combining these three proofs by means of a simple case distinction, one obtains a proof of $\sigma$ in $\rcas$ in time polynomial in $|\delta|$.
\end{proof}

We conclude with some comments on two other Ramsey-like combinatorial principles that were studied over $\rcas$ in $\cite{weak_cousins}$: the ascending-descending sequence principle $\ads$ and the cohesive Ramsey's theorem $\crt$. 
Over $\rca$ we have the following well-known chain of implications: $\rt \Rightarrow \cac \Rightarrow \ads \Rightarrow \coh \Rightarrow \crt$. 
The usual proofs of the implications from $\rt$ to $\cac$ and $\crt$, and for the one from $\cac$ to $\ads$, do not require any induction axioms, and thus they go through over $\rcas$. Since $\ads$ is a single axiom we immediately obtain the following.

\begin{corollary}\label{cor: ads has non-speedup}
$\wkls+\ads$ is polynomially simulated by $\rcas$ with respect to $\forall\pizth$ sentences.
\end{corollary}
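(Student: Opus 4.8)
The plan is to derive \cref{cor: ads has non-speedup} from \cref{main theorem_cac}, using the observation recorded just above that the standard deduction of $\ads$ from $\cac$ requires no induction. First I would fix, once and for all, a single proof $\pi_0$ of the axiom $\ads$ in $\rcas+\cac$. Working in $\rcas+\cac$ and given a linear order $(\nn,\leq_L)$, apply $\delzo$-comprehension to form the partial order $\preccurlyeq$ on $\nn$ defined by $x\preccurlyeq y\Leftrightarrow(x\leq y\,\wedge\,x\leq_L y)$; that $\preccurlyeq$ is reflexive, antisymmetric and transitive is an elementary consequence of the hypothesis that $\leq_L$ is a linear order. For $x<y$ in the standard order, $\preccurlyeq$-comparability of $x$ and $y$ amounts to $x\leq_L y$, while $\preccurlyeq$-incomparability amounts to $y<_L x$; hence an unbounded chain in $\preccurlyeq$ is an unbounded ascending sequence for $\leq_L$ and an unbounded antichain is an unbounded descending sequence. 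Thus $\cac$ applied to $\preccurlyeq$ yields precisely the solution demanded by $\ads$. Since this argument uses comprehension only, it goes through over $\rcas$, and since it takes no input, $\pi_0$ has a fixed constant size.

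Next, given a proof $\delta$ in $\wkls+\ads$ of a $\forall\pizth$ sentence $\sigma$, I would build in linear time a proof $\delta'$ of $\sigma$ in $\wkls+\cac$: prepend $\pi_0$, and justify each line of $\delta$ that cites the non-logical axiom $\ads$ by a constant-size repetition of the line of $\pi_0$ that proves $\ads$ -- a routine step in our Hilbert calculus, using the propositional tautology $\ads\Rightarrow\ads$ together with modus ponens. Then $\delta'$ is a bona fide $\wkls+\cac$-proof of $\sigma$ with $|\delta'|=|\delta|+O(1)$. Feeding $\delta'$ to the polynomial-time algorithm furnished by \cref{main theorem_cac} returns a proof of $\sigma$ in $\rcas$ of size polynomial in $|\delta'|$, hence polynomial in $|\delta|$; composing the two polynomial-time procedures is the desired polynomial simulation.

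I do not expect any real obstacle here. The only point needing verification is the claim quoted from the excerpt that $\rcas$ proves $\cac\Rightarrow\ads$, and the construction of $\preccurlyeq$ above settles this using $\delzo$-comprehension alone. The hypothesis that $\ads$ is a single sentence (rather than a scheme) is used only to keep the passage from an $\ads$-proof to a $\cac$-proof down to an additive constant; the very same two-step reduction gives a polynomial simulation of $\wkls+P$ by $\rcas$ with respect to $\forall\pizth$ sentences for every single sentence $P$ with $\rcas+\cac\vdash P$.
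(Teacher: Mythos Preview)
Your proposal is correct and follows exactly the approach the paper intends: the paper's entire argument for the corollary is the sentence preceding it, namely that the usual proof of $\cac\Rightarrow\ads$ goes through over $\rcas$ and that $\ads$ is a single sentence, so the result is immediate from \cref{main theorem_cac}. You have simply spelled out the details the paper leaves implicit, including a concrete construction of the partial order $\preccurlyeq$ witnessing the implication and the routine transformation of a $\wkls+\ads$-proof into a $\wkls+\cac$-proof of constant size overhead.
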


On the other hand, it is not known whether the implication from $\ads$ or $\cac$ to $\crt$ is provable without $\iszo$. Also, as opposed to $\rt$ and $\cac$, there is no natural finite version of $\crt$ which could be used to construct a forcing interpretation analogous to the one from \cref{section:generic cut}. As of now, we have no strong evidence concerning the question whether $\rcas+\crt$ has speedup over $\rcas$. For instance, \cite{weak_cousins} left open the question about $\crt$ implying any nontrivial closure properties of the cut $\cutzo$. 
Thus, we leave the following question.

\begin{question}
Is $\rcas+\crt$ polynomially simulated by $\rcas$ with respect to $\forall\pizth$ sentences?
\end{question}

\paragraph{Acknowledgement}The author is very grateful to her PhD advisor, Leszek Ko\l{}odziejczyk, for introducing her to the topic of proof size and for many helpful comments on the paper. 

\end{document}